\newtheorem{theorem}{Theorem}[section]
\newtheorem{lemma}[theorem]{Lemma}
\theoremstyle{corollary}
\newtheorem{corollary}[theorem]{Corollary}
\theoremstyle{proposition}
\newtheorem{proposition}[theorem]{Proposition}
\theoremstyle{definition}
\newtheorem{definition}[theorem]{Definition}
\theoremstyle{remark}
\numberwithin{equation}{section}
\newcommand{\comment}[1]{}
\begin{document}

\title [multilinear  Hausdorff operators and commutators]{Weighted Lebesgue and central Morrey 
\\
estimates for p-adic multilinear  Hausdorff operators and its commutators}

\thanks{This paper is funded by Vietnam National Foundation for Science and Technology Development (NAFOSTED)}

\author{Nguyen Minh Chuong}

\address{Institute of mathematics, Vietnamese  Academy of Science and Technology,  Hanoi, Vietnam.}
\email{nmchuong@math.ac.vn}

\author{Dao Van Duong}
\address{School of Mathematics, Mientrung University of Civil Engineering, Phu Yen, Vietnam}
\email{daovanduong@muce.edu.vn}

\author{Kieu Huu Dung}
\address{School of Mathematics, University of Transport and Communications, Ha Noi, Vietnam}
\email{khdung@utc2.edu.vn}
\keywords{Multilinear  Hausdorff operator, commutator, central BMO space, Morrey space, $A_p$ weight, maximal operator, $p$-adic analysis.}
\subjclass[2010]{42B25, 42B99, 26D15}
\begin{abstract}
In this paper, we establish the sharp boundedness of $p$-adic multilinear Hausdorff operators on the product of Lebesgue and central Morrey spaces associated with both power weights and  Muckenhoupt weights. Moreover, the boundedness for the commutators of $p$-adic multilinear Hausdorff operators on the such spaces with symbols in central BMO space is also obtained.
\end{abstract}

\maketitle
\section{Introduction}
The $p$-adic analysis in the past decades has received a lot of attention due to its important applications in mathematical physics as well as its necessity in sciences and technologies (see e.g. \cite{Avetisov1, Avetisov2, Chuong1, Dragovich, Khrennikov1, K2001, Kozyrev, Varadarajan, Vladimirov, Vladimirov1, Vladimirov2} and references therein). It is well known that the theory of functions from $\mathbb Q_p$ into $\mathbb C$ play an important role in $p$-adic quantum mechanics, the theory of $p$-adic probability in which real-valued random variables have to be considered to solve covariance problems. In recent years, there is an increasing interest in the study of harmonic analysis and wavelet analysis over the $p$-adic fields (see e.g. \cite{Albeverio, Chuong1, Chuong5, Haran1, Haran2, Kozyrev}).
\vskip 5pt
 It is crucial that the Hausdorff operator is one of the important operators in harmonic analysis. It is closely related to the summability of the classical Fourier series  (see, for instance, \cite{Dyachenko}, \cite{Hausdorff}, \cite{Hurwitz},  and the references therein).  Let $\Phi$ be a locally integrable function on $\mathbb R^n$. The matrix Hausdorff operator $H_{\Phi,A}$ associated to the kernel function $\Phi$ is then defined in terms of the integral form as follows
\begin{equation}\label{Hausdorff1}
H_{\Phi, A}(f)(x)=\int\limits_{\mathbb R^n}{\frac{\Phi(y)}{|y|^n}f(A(y) x)dy},\,x\in\mathbb R^n,
\end{equation}
where $A(y)$ is an $n\times n$ invertible matrix for almost everywhere $y$ in the support of $\Phi$. It is worth pointing out that if the kernel function $\Phi$ is chosen appropriately, then the Hausdorff operator reduces to many classcial operators in analysis such as the Hardy operator, the Ces\`{a}ro operator, the Riemann-Liouville fractional integral operator and the Hardy-Littlewood average operator.
 \vskip 5pt
In 2010, Volosivets \cite{Volosivets1} introduced the matrix Hausdorff operator on the $p$-adic numbers field as follows
\begin{equation}\label{HausdorfVolosivets}
{\mathcal H}_{\varphi,A}(f)(x)=\int_{\mathbb Q_p^n} \varphi(t)f(A(t)x)dt,\;\;x\in\mathbb Q^n_p,
\end{equation}
where $\varphi(t)$ is a locally integrable function on $\mathbb Q_p^n$ and $A(t)$ is an $n\times n$ invertible matrix for almost everywhere $t$ in the support of $\varphi$.  It is easy to see that if $\varphi(t)=\psi(t_1)\chi_{{\mathbb Z^*_p}^n}(t)$ and $A(t)= t_1.I_n$ ($I_n$ is an identity matrix), for $t=(t_1,t_2,...,t_n)$, where $\psi:\mathbb Q_p\to \mathbb C$ is a measurable function,  ${\mathcal H}_{\varphi,A}$ then reduces to the $p$-adic weighted Hardy-Littlewood average operator due to Rim and Lee \cite{Rim}. In recent years, the theory of  the Hardy operators, the Hausdorff operators over the $p$-adic numbers field has been significantly developed into different contexts, and they are actually useful for $p$-adic analysis (see e.g. \cite{Chuong3}, \cite{Chuongduong}, \cite{Hung}, \cite{Volosivets3}). It is known that the authors in \cite{CDD2017} also introduced and studied a general class of multilinear Hausdorff operators on the real field defined by
\begin{equation}\label{mulHausdorff}
{\mathcal{H}_{\Phi ,\vec{A} }}(\vec{f})(x) = \int\limits_{{\mathbb R^n}} {\frac{{\Phi (y)}}{{{{\left| y \right|}^n}}}} \prod\limits_{i = 1}^m {{f_i}} ({A_i}(y)x)dy,\,x\in\mathbb R^n,
\end{equation}
for $\vec{f}=\left(f_1, ..., f_m\right)$ and $\vec{A}=\left(A_1, ..., A_m\right)$.
\vskip 5pt
Motivated by above results, in this paper we shall introduce and study a class of $p$-adic multilinear (matrix) Hausdorff operators defined as follows.
\begin{definition}
Let $\Phi: \mathbb Q_p^n\rightarrow [0, \infty) $. Let $f_1, f_2, ..., f_m$ be measurable complex-valued functions on $\mathbb Q^n_p$. The $p$-adic multilinear Hausdorff operator is defined by
\begin{equation}\label{Hpadic}
{\mathcal H}^p_{\Phi,\vec A}(\vec f)(x)=\int_{\mathbb Q^n_p} \dfrac{\Phi(y)}{|y|^n_p}\prod\limits_{i=1}^{m} f_i(A_i(y)x)dy,\;\;x\in\mathbb Q^n_p,
\end{equation}
for $\vec f=\big(f_1,..., f_m\big)$.
\end{definition}
\vskip 5pt
Let $b$ be a measurable function. We denote by $\mathcal{M}_b$ the multiplication operator defined  by $\mathcal{M}_bf (x)=b(x) f (x)$ for any measurable function $f$. If $\mathcal{H}$ is a linear operator on some measurable function space, the commutator of Coifman-Rochberg-Weiss type formed by $\mathcal{M}_b$  and $\mathcal{H}$ is defined by $[\mathcal{M}_b, \mathcal{H}]f (x)=(\mathcal{M}_b\mathcal{H}-\mathcal{H}\mathcal{M}_b) f (x)$. Analogously, let us give the definition for the commutators of Coifman-Rochberg-Weiss type of $p$-adic multilinear Hausdorff operator.
\begin{definition}
Let $\Phi, \vec A$ be as above. The Coifman-Rochberg-Weiss type commutator of $p$-adic multilinear Hausdorff operator is defined by

\begin{equation}\label{commuatatorHpadic}
{\mathcal H}^p_{\Phi,\vec A,\vec b}(\vec f)(x)=\int_{\mathbb Q^n_p} \dfrac{\Phi(y)}{|y|_p^n}\prod\limits_{i=1}^{m}\Big(b_i(x)- b_i(A_i(y)x)\Big)\prod\limits_{i=1}^{m} f_i(A_i(y)x)dy,
\end{equation}
where $x\in\mathbb Q^n_p $, $\vec b=\big(b_1,..., b_m\big)$ and $b_i$ are locally integrable functions on $\mathbb Q_p^n$ for all $i=1,...,m$.
\end{definition}
\vskip 5pt
The main purpose of this paper is to study the  $p$-adic multilinear Hausdorff operators and its commutators on the $p$-adic numbers field. More precisely,  we obtain the necessary and sufficient conditions for  the boundedness of ${\mathcal {H}}^p_{\Phi,\vec A}$ and ${\mathcal {H}}^p_{\Phi,\vec A,\vec b}$ on the  product of  Lebesgue and central Morrey spaces with weights on $p$-adic field. In each case, we estimate the corresponding operator norms. Moreover, the boundedness of ${\mathcal {H}}^p_{\Phi,\vec A,\vec b}$ on  the such spaces with symbols in central BMO space is also established. It should be pointed out that all our results  are new even in the case of $p$-adic linear Hausdorff operators.
\vskip 5pt
Our paper is organized as follows. In Section \ref{section2}, we present some notations and preliminaries about $p$-adic analysis as well as give some definitions of the Lebesgue and central Morrey spaces associated with power weights and  Muckenhoupt weights. Our main theorems are given and proved in Section \ref{section3} and Section \ref{section4}.

\section{Some notations and definitions}\label{section2}
For a prime number $p$, let $\mathbb Q_p$ be the field of $p$-adic numbers. This field is the completion of the field of rational numbers $\mathbb Q$ with respect to the non-Archimedean $p$-adic norm $|\cdot|_p$. This norm is defined as follows: if $x=0$, $|0|_p=0$; if $x\not=0$ is an arbitrary rational number with the unique representation $x=p^\alpha\frac{m}{n}$, where $m, n$ are not divisible by $p$, $\alpha=\alpha(x)\in\Bbb Z$, then $|x|_p=p^{-\alpha}$. This norm satisfies the following properties:

\rm{(i)} $|x|_p\geq 0,\;\; \forall x\in\mathbb Q_p$, and $|x|_p=0\Leftrightarrow x=0;$

\rm{(ii)} $|xy|_p=|x|_p|y|_p,\;\; \forall x, y\in\Bbb Q_p;$

\rm{(iii)} $|x+y|_p\leq \max(|x|_p,|y|_p),\; \forall x, y\in\mathbb Q_p$, and
when $|x|_p\not=|y|_p$, we have $|x+y|_p=\max(|x|_p,|y|_p).$

It is also well-known that any non-zero $p$-adic number $x\in\mathbb Q_p$ can be uniquely represented in the canonical series
\begin {equation}\label{eq1.1}
x=p^\alpha (x_0+x_1p+x_2p^2+\cdot\cdot\cdot),
\end{equation}
where $\alpha=\alpha(x)\in\Bbb Z$,\; $x_k=0,1,...,p-1,\; x_0\not= 0,\; k=0,1,...$.
This series converges in the $p$-adic norm since $|x_kp^k|_p\leq p^{-k}$.

The space $\mathbb Q^n_p=\mathbb Q_p\times\cdot\cdot\cdot\times\mathbb Q_p$ consists of all points $x=(x_1,...,x_n)$, where $x_i\in\mathbb Q_p,\; i=1,...,n,\;\; n\geq1$. The $p$-adic norm of $\mathbb Q^n_p$ is defined by
\begin {equation}\label{eq1.2}
|x|_p=\max_{1\leq j\leq n}|x_j|_p.
\end{equation}
Let $A$ be an $n\times n$ matrix with entries $a_{ij}\in\mathbb Q_p$.
For $x=(x_1, ..., x_n)\in\mathbb Q^n_p$, we denote
$$
Ax=\Big(\sum\limits_{j=1}^n a_{1j}x_j, ..., \sum\limits_{j=1}^n a_{nj}x_j \Big).
$$
By Lemma 2 in paper \cite{Volosivets3}, the norm of $A$, regarded as an operator from $\mathbb Q^n_p$ to $\mathbb Q^n_p$, is 
$$
\|A\|_p:=\mathop{\rm max}\limits_{1\leq i\leq n} \mathop{\rm max}\limits_{1\leq j\leq n}|a_{ij}|_p.
$$
For simplicity of notation, we write $k_{A}={\rm log}_p \|A\|_p$. It is clear to see that $k_A\in\mathbb Z$.  It is easy to show that $|Ax|_p\leq \|A\|_p.|x|_p$ for any $x\in\mathbb Q^n_p$. In addition, if $A$ is invertible, by estimating as Lemma 3.1 in paper \cite{RFW2017}, then we get
\begin{align}\label{|det|}
\|A\|_p^{-n}\leq |{\rm det}{(A^{-1})}|_p\leq \|A^{-1}\|_p^n.
\end{align}

Let 
\[B_\alpha(a)=\left\{x\in\mathbb Q^n_p : |x-a|_p\leq p^\alpha\right\}
\]
be a ball of radius $p^\alpha$ with center at $a\in\mathbb Q^n_p$. Similarly, denote by 
\[S_\alpha(a)=\left\{x\in\mathbb Q^n_p : |x-a|_p=p^\alpha\right\}\]
the sphere with center at $a\in\mathbb Q^n_p$ and radius $p^\alpha$. If $B_\alpha=B_\alpha(0), S_\alpha=S_\alpha(0)$, then for any $x_0\in\mathbb Q^n_p$ we have $x_0+B_\alpha=B_\alpha(x_0)$ and $x_0+S_\alpha=S_\alpha(x_0)$. 


Since $\mathbb Q^n_p$ is a locally compact commutative group under addition, it follows from the standard theory that there exists a Haar measure $dx$ on $\mathbb Q^n_p$, which is unique up to positive constant multiple and is translation invariant. This measure is unique by normalizing $dx$ such that
\[
\int\limits\limits_{B_0}dx=|B_0|=1,
\]
where $|B|$ denotes the Haar measure of a measurable subset $B$ of $\mathbb Q^n_p$. By simple calculation, it is easy to obtain that $|B_\alpha(a)|=p^{n\alpha}, |S_\alpha(a)|=p^{n\alpha}(1-p^{-n})\simeq p^{n\alpha}$, for any $a\in\mathbb Q^n_p$.  For  $f\in L^1_{\text{loc}}(\mathbb Q^n_p)$, we have
$$\int_{\mathbb Q^n_p}f(x)dx=\lim_{\alpha\rightarrow +\infty}\int_{B_\alpha} f(x)dx=\lim_{\alpha\rightarrow +\infty}\sum_{-\infty<\gamma\leq\alpha}\int_{S_\gamma} f(x)dx.$$
In particular, if $f\in L^1(\mathbb Q^n_p)$, we can write 
$$\int_{\mathbb Q^n_p}f(x)dx=\sum_{\alpha=-\infty}^{+\infty}\int_{S_\alpha} f(x)dx,$$
and 
$$\int_{\mathbb Q^n_p}f(tx)dx=\frac{1}{|t|_p^n}\int_{\mathbb Q^n_p} f(x)dx,$$
where $t\in\mathbb Q_p\setminus\{0\}$. For a more complete introduction to the $p$-adic analysis, we refer the readers to \cite{Khrennikov1, Vladimirov2} and the references therein.

Let $\omega(x)$ be a weighted function, that is a non-negative locally integrable measurable function on $\mathbb Q^n_p$. The weighted Lebesgue space $L^q_\omega(\mathbb Q^n_p) \left(0 < q < \infty\right)$ is defined to be the space of all measurable functions $f$ on $\mathbb Q^n_p$ such that
\[
\|f\|_{L^q_\omega(\mathbb Q^n_p)}=\Big(\int_{\mathbb Q^n_p}|f(x)|^q\omega(x)dx\Big)^{1/q}<\infty.
\]
The space $L^q_{\omega, \,\rm loc}(\mathbb Q^n_p)$ is defined as the set of all measurable functions $f$ on $\mathbb Q^n_ p$ satisfying $\int_{K}|f(x)|^q\omega(x)dx<\infty$, for any compact subset $K$ of $\mathbb Q^n_p$. The space $L^q_{\omega,\rm loc}(\mathbb Q^n_p\setminus\{0\})$ is also defined in a similar way as the space $L^q_{\omega,\rm loc}(\mathbb Q^n_p)$.
\vskip 5pt
Throught the whole paper, we denote by $C$ a positive geometric constant that is independent of the main parameters, but can change from line to line. We also write $a \lesssim b$ to mean that there is a positive constant $C$, independent of the main parameters, such that $a\leq Cb$. The symbol $f\simeq g$ means that $f$ is equivalent to $g$ (i.e.~$C^{-1}f\leq g\leq Cf$). For any real number $\ell>1$, denote by $\ell'$ conjugate real number of $\ell$, i.e. $\frac{1}{\ell}+\frac{1}{\ell'}=1$. Denote $\omega(B)^{\lambda}=\left(\int_B{\omega(x)}dx\right)^{\lambda}$, for $\lambda\in\mathbb R$. Remark that if $\omega(x) = |x|_p^{\alpha}$ for $\alpha >-n$, then we have
\begin{align}\label{espower}
\omega(B_{\gamma})=\int_{B_{\gamma}}|x|_p^{\alpha}dx=\sum\limits_{k\leq \gamma} \int_{S_k}p^{k\alpha}dx=\sum\limits_{k\leq \gamma} p^{k(\alpha+n)}(1-p^{-n})\simeq p^{\gamma(\alpha+n)}.
\end{align}

\vskip 5pt
Next, let us give the definition of weighted $\lambda$-central Morrey spaces on $p$-adic numbers field as follows. 
\begin{definition} Let $\lambda\in\mathbb R$ and $ 1<q<\infty$.
The  weighted $\lambda$-central Morrey $p$-adic spaces ${\mathop{B}\limits^.}^{q,\lambda}_{\omega}(\mathbb Q^n_p)$ consists of all Haar measurable functions $f\in L^q_{\omega,\rm loc}(\mathbb Q_p^n)$  satisfying $\|f\|_{{\mathop{B}\limits^.}^{q,\lambda}_{\omega}(\mathbb Q^n_p)}<\infty$,
where
\begin{equation}
\|f\|_{{\mathop{B}\limits^.}^{q,\lambda}_{\omega}(\mathbb Q^n_p)}=\mathop{\rm sup}\limits_{\gamma\in \mathbb Z}\Big(\dfrac{1}{\omega(B_\gamma)^{1+\lambda q}}\int_{B_\gamma}|f(x)|^q\omega(x)dx\Big)^{1/q}.
\end{equation}
Remark that ${\mathop{B}\limits^.}^{q,\lambda}_{\omega}(\mathbb Q^n_p)$ is a Banach space and reduces to $\{0\}$ when $\lambda<-\frac{1}{q}$.
\end{definition}

Let us recall the definition of the weighted central BMO $p$-adic space.
\begin{definition}
Let $1\leq q<\infty$ and $\omega$ be a weight function. The weighted central bounded mean oscillation space ${{CMO}}^q_\omega(\mathbb Q^n_p)$ is defined as the set of all functions $f\in L^q_{\omega,\rm loc}(\mathbb Q^n_p)$ such that
\begin{equation}
\big\|f\big\|_{{{CMO}}^q_\omega(\mathbb Q^n_p)}=\mathop {\rm sup}\limits_{\gamma\in \mathbb Z}\Big( \frac{1}{\omega(B_\gamma)}\int\limits_{B_\gamma}{|f(x)-f_{B_\gamma}|^q\omega(x)dx}\Big)^{\frac{1}{q}}<\infty,
\end{equation}
where 
\[
f_{B_\gamma}=\frac{1}{|B_\gamma|}\int\limits_{B_\gamma}{f(x)dx}.
\]
\end{definition} 
\vskip 5pt
The theory of $A_\ell$ weight was first introduced by Benjamin Muckenhoupt on the Euclidean spaces in order to characterise the boundedness of Hardy-Littlewood maximal functions on the weighted $L^\ell$ spaces (see \cite{Muckenhoupt1972}). For $A_\ell$ weights on the $p$-adic fields,  more generally, on the local fields or homogeneous type spaces,  one can refer to \cite{chuonghung, HCE2012} for more details. Let us now recall the definition of $A_\ell$ weights.
\begin{definition} Let $1 < \ell < \infty$. It is said that a nonnegative locally integrable function $\omega \in A_\ell(\mathbb Q^n_p)$ if there exists a constant $C$ such that for all balls $B$, we have
$$\Big(\dfrac{1}{|B|}\int_{B}\omega(x)dx\Big)\Big(\dfrac{1}{|B|}\int_{B}\omega(x)^{-1/(\ell-1)}dx\Big)^{\ell-1}\leq C.$$
It is said that a weight $\omega\in A_1(\mathbb Q^n_p)$ if there is a constant $C$ such that for all balls $B$, we get
$$\dfrac{1}{|B|}\int_{B}\omega(x)dx\leq C\mathop{\rm essinf}\limits_{x\in B}\omega(x).$$
\end{definition}
We denote by $A_{\infty}(\mathbb Q^n_p) = \bigcup\limits_{1\leq \ell<\infty}A_\ell(\mathbb Q^n_p)$. Let us give the following standard result related to the Muckenhoupt weights.
\begin{proposition}
\begin{enumerate}
\item[\rm (i)] $A_\ell(\mathbb Q^n_p)\subsetneq A_q(\mathbb Q^n_p)$, for $1\leq  \ell < q < \infty$.
\item[\rm (ii)] If $\omega\in A_\ell(\mathbb Q^n_p)$ for  $1 < \ell < \infty$, then there is an $\varepsilon > 0$ such that $\ell-\varepsilon > 1$ and $\omega\in A_{\ell-\varepsilon}(\mathbb Q^n_p)$.
\end{enumerate}
\end{proposition}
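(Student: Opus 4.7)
The plan is to adapt the classical Muckenhoupt theory to the ultrametric setting of $\mathbb{Q}_p^n$, treating (i) and (ii) separately. The non-Archimedean property---any two $p$-adic balls are either disjoint or nested---makes several geometric steps cleaner than in $\mathbb{R}^n$.

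For part (i), I would establish the inclusion $A_\ell \subset A_q$ by a single application of Jensen's inequality. Since $(\ell-1)/(q-1) < 1$, the map $t \mapsto t^{(\ell-1)/(q-1)}$ is concave, so applied to $\omega^{-1/(\ell-1)}$ it gives
\[
\Big(\frac{1}{|B|}\int_B \omega^{-1/(q-1)}dx\Big)^{q-1} = \Big(\frac{1}{|B|}\int_B (\omega^{-1/(\ell-1)})^{(\ell-1)/(q-1)}dx\Big)^{q-1} \leq \Big(\frac{1}{|B|}\int_B \omega^{-1/(\ell-1)}dx\Big)^{\ell-1}.
\]
Multiplying by $\frac{1}{|B|}\int_B\omega\,dx$ and invoking the $A_\ell$ hypothesis yields the $A_q$ bound with the same constant. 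The endpoint $\ell=1$ is handled by noting that $\frac{1}{|B|}\int_B\omega\,dx \leq C\operatorname*{ess\,inf}_B\omega$ dominates any $(\frac{1}{|B|}\int_B\omega^{-1/(q-1)}dx)^{1-q}$. For strictness I would exhibit the $p$-adic power weights: using \eqref{espower} together with a direct computation of $\int_{B_\gamma}|x|_p^{-\alpha/(r-1)}dx$, one checks that $|\cdot|_p^\alpha\in A_r$ precisely when $-n<\alpha<n(r-1)$, so any $\alpha\in[n(\ell-1),n(q-1))$ produces a weight in $A_q\setminus A_\ell$.

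Part (ii) is the self-improving property, and its essential input is the reverse Hölder inequality for $A_\infty$ weights: for every $\omega\in A_\ell\subset A_\infty$ there exist $\delta, C>0$ with
\[
\Big(\frac{1}{|B|}\int_B \omega^{1+\delta}dx\Big)^{1/(1+\delta)} \leq C\,\frac{1}{|B|}\int_B \omega\,dx
\]
uniformly over balls $B$. This has been recorded in the $p$-adic / local-field setting in \cite{chuonghung, HCE2012}; the proof uses the $A_\infty$ level-set estimate together with a $p$-adic Calderón--Zygmund stopping-time decomposition, which goes through without geometric complication because of the nested-ball property. To pass from reverse Hölder to (ii), set $\sigma=\omega^{-1/(\ell-1)}$, which lies in $A_{\ell/(\ell-1)}\subset A_\infty$ by the standard $A_\ell$-duality, and apply reverse Hölder to $\sigma$ with some exponent $1+\delta'>1$. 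Choosing $\varepsilon=\delta'(\ell-1)/(1+\delta')$ arranges $(1+\delta')(\ell-\varepsilon-1)=\ell-1$, and the reverse Hölder bound rewrites as
\[
\Big(\frac{1}{|B|}\int_B \omega^{-1/(\ell-\varepsilon-1)}dx\Big)^{\ell-\varepsilon-1} \leq C^{\ell-1}\Big(\frac{1}{|B|}\int_B \omega^{-1/(\ell-1)}dx\Big)^{\ell-1},
\]
which combined with the $A_\ell$ bound for $\omega$ gives exactly the $A_{\ell-\varepsilon}$ condition (with $\varepsilon$ small enough that $\ell-\varepsilon>1$).

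The main obstacle is the reverse Hölder inequality; the remainder of both parts is short algebra once that tool and the $A_\ell$-duality are in hand. If a self-contained derivation is preferred over a citation, I would carry out the good-$\lambda$ / stopping-time argument directly on $p$-adic balls, where the ultrametric structure permits disjoint maximal selections and replaces the usual Vitali covering step with a trivial disjointification.
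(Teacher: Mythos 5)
The paper records this proposition as a standard fact and gives no proof of its own (the reverse H\"older theory it needs is delegated to \cite{IMS2015}), so there is nothing to compare against except the classical argument --- which is exactly what you give, and it is correct. In part (i) the Jensen/H\"older step yielding $A_\ell(\mathbb Q^n_p)\subset A_q(\mathbb Q^n_p)$ with the same constant is right, and the $\ell=1$ endpoint works as you indicate: from $\omega^{-1/(q-1)}\le(\mathop{\rm essinf}_{B}\omega)^{-1/(q-1)}$ a.e.\ on $B$ one gets $\big(\tfrac{1}{|B|}\int_B\omega^{-1/(q-1)}dx\big)^{q-1}\le(\mathop{\rm essinf}_{B}\omega)^{-1}$, which combines with the $A_1$ bound. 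One small slip in the strictness example: the closed left endpoint $\alpha=n(\ell-1)$ is fine for $\ell>1$ (it lies in $A_q\setminus A_\ell$ by Proposition \ref{pro_power}), but for $\ell=1$ it gives $\alpha=0$, the constant weight, which \emph{is} in $A_1$; there you must take $0<\alpha<n(q-1)$. In part (ii) the duality $\sigma=\omega^{-1/(\ell-1)}\in A_{\ell'}(\mathbb Q^n_p)$, the application of reverse H\"older to $\sigma$, and the exponent bookkeeping $\varepsilon=\delta'(\ell-1)/(1+\delta')$ with $(1+\delta')(\ell-\varepsilon-1)=\ell-1$ are all correct; this is the textbook openness proof. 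The only genuinely nontrivial ingredient is the reverse H\"older inequality on $\mathbb Q^n_p$, which you cite rather than derive --- but the paper itself only quotes \cite{IMS2015} for the equivalence of $A_\infty(\mathbb Q^n_p)$ with the reverse H\"older classes, so your proof sits at the same level of self-containedness as the surrounding text, and your closing remark that a good-$\lambda$/stopping-time derivation simplifies in the ultrametric setting is accurate.
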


A closing relation to $A_{\infty}(\mathbb Q^n_p)$ is the reverse H\"{o}lder condition. If there exist $r > 1$ and a fixed constant $C$ such that
$$\Big(\dfrac{1}{|B|}\int_{B}\omega(x)^rdx\Big)^{1/r}\leq \dfrac{C}{|B|}\int_{B}\omega(x)dx,$$
for all balls $B \subset\mathbb Q^n_p$, we then say that $\omega$ satisfies the reverse H\"{o}lder condition of order $r$ and write $\omega\in RH_r(\mathbb Q^n_p)$. According to Theorem 19 and Corollary 21 in \cite{IMS2015}, $\omega\in A_{\infty}(\mathbb Q^n_p)$ if and
only if there exists some $r > 1$ such that $\omega\in RH_r(\mathbb Q^n_p)$. Moreover, if $\omega\in RH_r(\mathbb Q^n_p)$, $r > 1$, then $\omega\in RH_{r+\varepsilon}(\mathbb Q^n_p)$ for some $\varepsilon > 0$. We thus write $r_\omega = {\rm sup}\{r > 1: \omega\in RH_r(\mathbb Q^n_p)\}$ to denote the critical index of $\omega$ for the reverse H\"{o}lder condition.

An important example of $A_\ell(\mathbb Q^n_p)$ weight is the power function $|x|_p^{\alpha}$. By the similar arguments as Propositions 1.4.3 and 1.4.4 in \cite{LDY2007}, we obtain the following properties of power weights.
\begin{proposition}\label{pro_power}
Let $x\in\mathbb Q^n_p$. Then, we have
\begin{itemize}
\item[(i)] $|x|^{\alpha}_p\in A_1(\mathbb Q^n_p)$ if and only if $-n< \alpha\leq 0$;
\item[(ii)] $|x|^{\alpha}_p\in A_\ell(\mathbb Q^n_p)$ for $1 < \ell< \infty$, if and only if $-n < \alpha < n(\ell-1)$.
\end{itemize}
\end{proposition}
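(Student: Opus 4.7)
The plan is to exploit the ultrametric structure of $\mathbb Q_p^n$ in order to reduce the verification of both Muckenhoupt conditions to computations on balls centered at the origin. The key observation is that for any ball $B_\gamma(a)$ in $\mathbb Q_p^n$, the strong triangle inequality forces a dichotomy: either $0\in B_\gamma(a)$, in which case $B_\gamma(a)=B_\gamma$, or $0\notin B_\gamma(a)$, in which case $|x-a|_p\le p^\gamma<|a|_p$ gives $|x|_p=|a|_p$ for every $x\in B_\gamma(a)$. In the latter case the weight $|x|_p^\alpha$ is constant on $B_\gamma(a)$, so both the $A_1$ and the $A_\ell$ ratios equal $1$ regardless of $\alpha$. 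Hence the only meaningful case is $B=B_\gamma$, where the formula \eqref{espower} supplies $\int_{B_\gamma}|x|_p^\alpha\,dx\simeq p^{\gamma(\alpha+n)}$ as soon as $\alpha>-n$ (and the integral diverges otherwise, which already gives the lower endpoint).

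For part (i), I would compute $\frac{1}{|B_\gamma|}\int_{B_\gamma}|x|_p^\alpha\,dx\simeq p^{\gamma\alpha}$ and compare with $\mathop{\rm essinf}_{x\in B_\gamma}|x|_p^\alpha$. When $\alpha\le 0$, the essinf is attained on $S_\gamma$ and equals $p^{\gamma\alpha}$, so the $A_1$ inequality holds with a uniform constant independent of $\gamma$; when $\alpha>0$, the essinf is $0$, because $B_\gamma$ contains points $x$ with $|x|_p$ arbitrarily close to $0$, and hence $A_1$ fails. Combining with local integrability yields exactly $-n<\alpha\le 0$.

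For part (ii), I would evaluate both factors in the $A_\ell$ condition on $B_\gamma$. The first factor forces $\alpha>-n$ and gives $\simeq p^{\gamma\alpha}$; the second factor, $\bigl(\frac{1}{|B_\gamma|}\int_{B_\gamma}|x|_p^{-\alpha/(\ell-1)}\,dx\bigr)^{\ell-1}$, requires $-\alpha/(\ell-1)>-n$, that is $\alpha<n(\ell-1)$, and contributes $\simeq p^{-\gamma\alpha}$ after raising to the $(\ell-1)$-st power. Their product is uniformly $\simeq 1$, proving sufficiency. Necessity follows from the fact that if either endpoint is violated then one of the two integrals on $B_\gamma$ diverges, destroying any uniform bound on $\gamma$.

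There is no real obstacle: the ultrametric dichotomy collapses the Euclidean argument of Propositions 1.4.3 and 1.4.4 in \cite{LDY2007} to a single one-variable computation on $B_\gamma$, and the power series summations in \eqref{espower} do the rest. The only point that requires a line of justification is the identification $\mathop{\rm essinf}_{B_\gamma}|x|_p^\alpha=0$ for $\alpha>0$, which is immediate because $\{x\in\mathbb Q_p^n:|x|_p\le p^{-N}\}\subset B_\gamma$ has positive Haar measure for every $N$.
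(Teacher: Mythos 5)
Your argument is correct. Note, however, that the paper does not actually prove Proposition \ref{pro_power}: it is stated with a one-line appeal to ``the similar arguments as Propositions 1.4.3 and 1.4.4 in \cite{LDY2007}'', i.e.\ to the Euclidean power-weight computation, transplanted to $\mathbb Q_p^n$. Your route is therefore genuinely different in substance, and in fact cleaner than the Euclidean template. The real-variable proof must split balls $B(x_0,r)$ into the cases $|x_0|\lesssim r$ and $|x_0|\gg r$ and then control the oscillation of $|x|^\alpha$ on the far balls by comparison estimates; your ultrametric dichotomy replaces all of that with an exact statement: a ball of $\mathbb Q_p^n$ either equals $B_\gamma$ (since every point of a $p$-adic ball is a center) or carries a constant weight $|x|_p^\alpha\equiv|a|_p^\alpha$, on which both Muckenhoupt ratios are identically $1$. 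After that, formula \eqref{espower} gives $\frac{1}{|B_\gamma|}\int_{B_\gamma}|x|_p^{\beta}\,dx\simeq p^{\gamma\beta}$ precisely for $\beta>-n$ (with divergence otherwise), which yields both the sufficiency (the product of the two factors is $\simeq p^{\gamma\alpha}\cdot p^{-\gamma\alpha}=1$ uniformly in $\gamma$) and the necessity at both endpoints, and your observation that $\mathop{\rm essinf}_{B_\gamma}|x|_p^\alpha=0$ for $\alpha>0$ correctly rules out $A_1$ there. What your approach buys is a short, self-contained, fully $p$-adic proof with sharp constants on the non-centered balls; what the paper's citation buys is brevity at the cost of leaving the reader to check that the Euclidean case analysis survives the change of geometry, a check your dichotomy shows is actually unnecessary.
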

Let us give the following standard characterization of $A_\ell$ weights which it is proved in the similar way as the real setting (see \cite{Grafakos, Stein} for more details).
\begin{proposition}\label{pro2.3DFan}
Let $\omega\in A_\ell(\mathbb Q^n_p) \cap RH_r(\mathbb Q^n_p)$, $\ell\geq 1$ and $r > 1$. Then,  there exist constants $C_1, C_2 > 0$ such that
$$
C_1\left(\dfrac{|E|}{|B|}\right)^\ell\leq \dfrac{\omega(E)}{\omega(B)}\leq C_2\left(\dfrac{|E|}{|B|}\right)^{(r-1)/r}
$$
for any measurable subset $E$ of a ball $B$.
\end{proposition}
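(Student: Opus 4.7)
The plan is to prove the two inequalities separately, both by Hölder's inequality; the upper bound uses the reverse Hölder condition $RH_r$, while the lower bound uses the $A_\ell$ condition. These arguments are entirely parallel to the classical Euclidean setting (cf.\ Grafakos or Stein), and the only adaptation is replacing ordinary balls in $\mathbb{R}^n$ by $p$-adic balls $B_\alpha$; since Hölder's inequality and the measure-theoretic manipulations do not depend on the geometry of the ball, the proofs transfer verbatim.

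For the upper bound, I would start from
\[
\omega(E)=\int_B \omega(x)\chi_E(x)\,dx,
\]
apply Hölder with exponents $r$ and $r'=r/(r-1)$ to get $\omega(E)\leq\bigl(\int_B \omega^r\bigr)^{1/r}|E|^{1/r'}$, and then invoke $\omega\in RH_r(\mathbb Q^n_p)$ to replace $\bigl(\int_B \omega^r\bigr)^{1/r}$ by $C|B|^{-1/r'}\omega(B)$. This yields
\[
\omega(E)\leq C\,\omega(B)\left(\frac{|E|}{|B|}\right)^{(r-1)/r},
\]
which is the right-hand inequality.

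For the lower bound with $\ell>1$, I would split $1=\omega(x)^{1/\ell}\cdot\omega(x)^{-1/\ell}$ on $E$ and apply Hölder with exponents $\ell$ and $\ell'=\ell/(\ell-1)$:
\[
|E|\leq \omega(E)^{1/\ell}\Bigl(\int_B \omega(x)^{-1/(\ell-1)}dx\Bigr)^{(\ell-1)/\ell}.
\]
Raising to the $\ell$-th power and using the $A_\ell$ condition in the form $\bigl(\int_B \omega^{-1/(\ell-1)}\bigr)^{\ell-1}\leq C\,|B|^\ell/\omega(B)$, I obtain $|E|^\ell \leq C\,\omega(E)|B|^\ell/\omega(B)$, which rearranges to the desired $\omega(E)/\omega(B)\geq C_1(|E|/|B|)^\ell$. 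The case $\ell=1$ is even easier: apply the $A_1$ condition directly to write $\omega(B)/|B|\leq C\,\mathop{\mathrm{essinf}}_{x\in B}\omega(x)$, and then $\omega(E)\geq|E|\cdot\mathop{\mathrm{essinf}}_B\omega\geq C^{-1}|E|\,\omega(B)/|B|$.

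There is no real obstacle here; the main conceptual point is that the Hölder inequality and the defining inequalities of $A_\ell$ and $RH_r$ are stated integrally, so nothing in the argument sees the ultrametric structure of $\mathbb Q_p^n$. The only minor care needed is in the case $\ell=1$, where the $A_1$ bound involves an essential infimum rather than the dual weight $\omega^{-1/(\ell-1)}$, so I would handle $\ell=1$ and $\ell>1$ in separate short paragraphs to make the constants explicit.
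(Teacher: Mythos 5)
Your proof is correct and is exactly the standard argument the paper itself invokes: the paper does not write out a proof of this proposition but simply refers to the real-variable case in Grafakos and Stein, and your two Hölder-inequality computations (with $RH_r$ for the upper bound and $A_\ell$ — or the essential infimum when $\ell=1$ — for the lower bound) are precisely that classical proof, transferred verbatim to $\mathbb Q_p^n$ as the authors intend.
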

\begin{proposition}\label{pro2.4DFan}
If $\omega\in A_\ell(\mathbb Q^n_p)$, $1 \leq \ell < \infty$, then for any $f\in L^1_{\rm loc}(\mathbb Q^n_p)$ and any ball $B \subset \mathbb Q^n_p$, we have
$$
\dfrac{1}{|B|}\int_{B}|f(x)|dx\leq C\left(\dfrac{1}{\omega(B)}\int_{B}|f(x)|^\ell\omega(x)dx\right)^{1/\ell}.
$$
\end{proposition}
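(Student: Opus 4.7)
The plan is to split the proof into the two cases $\ell = 1$ and $1 < \ell < \infty$ and, in each case, to reduce the inequality directly to the defining property of the $A_\ell$ class. No deep result is needed; the only work is to match the weighted and unweighted averages with the appropriate Hölder estimate.

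First I would handle the case $\ell = 1$. By the definition of $A_1(\mathbb Q^n_p)$, for the given ball $B$ one has
\[
\frac{1}{|B|}\int_{B}\omega(x)\,dx \le C\,\mathop{\rm essinf}\limits_{x\in B}\omega(x),
\]
so $\omega(x) \ge C^{-1}\,\omega(B)/|B|$ for almost every $x \in B$. Multiplying $|f(x)|$ by $\omega(x)/\omega(x)$ and using this pointwise lower bound on the denominator gives $|f(x)| \le C\,(|B|/\omega(B))\,|f(x)|\,\omega(x)$; integrating over $B$ and dividing by $|B|$ yields exactly the desired inequality (with exponent $1$).

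For $1 < \ell < \infty$, the key step is to apply Hölder's inequality with exponents $\ell$ and $\ell'=\ell/(\ell-1)$ after splitting $1 = \omega^{1/\ell}\cdot\omega^{-1/\ell}$:
\[
\int_{B}|f(x)|\,dx \le \Bigl(\int_{B}|f(x)|^\ell \omega(x)\,dx\Bigr)^{1/\ell}\Bigl(\int_{B}\omega(x)^{-1/(\ell-1)}\,dx\Bigr)^{1/\ell'}.
\]
The $A_\ell$ condition rewritten as
\[
\Bigl(\int_B \omega^{-1/(\ell-1)}\Bigr)^{\ell-1} \le C\,\frac{|B|^\ell}{\omega(B)}
\]
lets one bound the second factor by $C^{1/\ell}\,|B|/\omega(B)^{1/\ell}$ (since $(\ell-1)/\ell = 1/\ell'$). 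Dividing through by $|B|$ then produces the claimed estimate.

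The only step that requires a small amount of care is the bookkeeping with the exponents $\ell$, $\ell'$, $\ell-1$, and $1/(\ell-1)$, so that the powers of $|B|$ and $\omega(B)$ coming out of the $A_\ell$ bound combine correctly with the Hölder factors; this is the sole place where an arithmetic slip could occur, but there is no conceptual obstacle.
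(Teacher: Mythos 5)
Your proof is correct: the exponent bookkeeping checks out (indeed $\ell'/\ell=1/(\ell-1)$ and $\ell'(\ell-1)=\ell$, so the $A_\ell$ bound on $\bigl(\int_B\omega^{-1/(\ell-1)}\bigr)^{1/\ell'}$ yields exactly $C^{1/\ell}|B|/\omega(B)^{1/\ell}$), and the $\ell=1$ case follows from the essential-infimum bound as you describe. The paper does not write out a proof of this proposition at all --- it only cites the real-variable analogue in Grafakos and Stein --- and your argument is precisely that standard Hölder-plus-$A_\ell$ proof transplanted to $\mathbb Q_p^n$, so it is essentially the intended one.
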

Let us recall the definition of the Hardy-Littlewood maximal operator
$$
\mathcal{M}f(x)=\mathop{\rm sup}\limits_{\gamma\in\mathbb Z}\dfrac{1}{p^{n\gamma}}\int_{B_\gamma(x)}|f(y)|dy.
$$
It is useful to remark that the Hardy-Littlewood maximal operator $M$ is bounded on $L^\ell_\omega(\mathbb Q^n_p)$ if and only if $\omega\in A_\ell(\mathbb Q^n_p)$ for all $\ell>1$. Finally, we introduce a new maximal operator which is used in the sequel, that is
$$
\mathcal{M}^{mod}f(x)= \mathop {\rm sup }\limits_{\scriptstyle \,\,\,\,\,\gamma\in\mathbb Z\hfill\atop
\scriptstyle |x|_p\leq p^{\gamma}\hfill}\dfrac{1}{p^{n\gamma}}\int_{B_\gamma(x)}|f(y)|dy.
$$
\section{The main results about the boundness of ${\mathcal{H}}^p_{\Phi,\vec A}$}\label{section3}
Let us now assume that $q$ and $q_i\in [1,\infty)$, $\alpha,\alpha_i$ are real numbers such that $\alpha_i \in (-n,\infty)$, for $i=1,2,...,m$ and 
$$   \frac{1}{{{q_1}}} + \frac{1}{{{q_2}}} + \cdots + \frac{1}{{{q_m}}} = \frac{1}{q}, $$
  $$   \frac{{\alpha_1}}{{{q_{1}}}} + \frac{\alpha _2}{{{q_{2}}}} + \cdots + \frac{\alpha_m}{{{q_{m}}}} = \frac{\alpha}{q}.
$$

In this section, we will  investigate the boundedness of multilinear Hausdorff operators on weighted Lebesgue spaces and weighted central Morrey spaces associated to the case  of matrices having the important property as follows: there exists ${\nu_{\vec{A} }}\in\mathbb N$ such that 
\begin{equation}\label{DK1}
\left\| {{A_i}(y)} \right\|_p.\left\| {A_i^{ - 1}(y)} \right\|_p \leq p^{\nu_{\vec{A} }}, \,\,\,{\text{\rm for all  }}\,i=1,...,m,
\end{equation}
for almost everywhere $y \in \mathbb Q^n_p$. Thus, by the property of invertible matrice, it is easy  to show that
\begin{equation}\label{DK2}
\left\| A_i(y) \right\|_p^\sigma \lesssim \left\| A_i^{-1}(y)\right\|_p^{-\sigma},\,\textit{\rm for all }\,\sigma \,\in \mathbb R,
\end{equation}
and
\begin{equation}\label{DK3}
|A_i(y)x|^\sigma_p \gtrsim \left\| A_i^{-1}(y)\right\|^{-\sigma}_p.|x|^{\sigma}_p, \,\textit{\rm for all }\, \sigma\in\mathbb R,x\in \mathbb Q^n_p\setminus\{0\}.
\end{equation}
Our first main result is the following.
\begin{theorem}\label{theo-HfLq}
Let $\omega_1(x)=|x|_p^{\alpha_1}, ..., \omega_m(x)=|x|_p^{\alpha_m}$ and $\omega(x)=|x|_p^{\alpha}$. Then, $ {\mathcal{H}}^p_{\Phi,\vec A}$ is bounded from $L^{q_1}_{\omega_1}(\mathbb Q_p^n)\times \cdots\times L^{q_m}_{\omega_m}(\mathbb Q^n_p)$ to $L^q_\omega(\mathbb Q_p^n)$ if and only if
$$
\mathcal C_1=\int_{\mathbb Q^n_p}\dfrac{\Phi(y)}{|y|_p^n}\prod\limits_{i=1}^m \|A_i^{-1}(y)\|_p^{\frac{\alpha_i+n}{q_i}}dy<\infty.
$$
Furthermore, ${\big\|{\mathcal H}^p_{\Phi,\vec A}\big\|_{L^{q_1}_{\omega_1}(\mathbb Q_p^n)\times \cdots\times L^{q_m}_{\omega_m}(\mathbb Q^n_p)\to L^q_\omega(\mathbb Q_p^n)}\simeq \mathcal C_{1}}.$
\end{theorem}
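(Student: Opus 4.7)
The plan is to prove sufficiency by a Minkowski--Hölder--change-of-variables argument and necessity by a one-parameter family of "near-eigenfunction" test functions.

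For the sufficiency direction, I would first observe that the conditions (\ref{DK1})--(\ref{DK3}) together with $\|A\|_p\|A^{-1}\|_p\ge 1$ yield the comparison $\|A_i(y)\|_p\simeq \|A_i^{-1}(y)\|_p^{-1}$, and consequently $|A_i(y)x|_p\simeq \|A_i^{-1}(y)\|_p^{-1}|x|_p$ for all $x\in\mathbb Q_p^n\setminus\{0\}$; combined with (\ref{|det|}) this gives $|\det A_i(y)^{-1}|_p\simeq \|A_i^{-1}(y)\|_p^n$. With these estimates in hand, I would apply Minkowski's integral inequality to pull the $L^q_\omega$-norm inside the $y$-integral, then use the multilinear Hölder inequality with exponents $q_i/q$ (which is permitted because $\sum 1/q_i=1/q$) to get
\begin{equation*}
\|\mathcal H^p_{\Phi,\vec A}(\vec f)\|_{L^q_\omega}\le\int_{\mathbb Q_p^n}\frac{\Phi(y)}{|y|_p^n}\prod_{i=1}^m\|f_i(A_i(y)\cdot)\|_{L^{q_i}_{\omega_i}}\,dy.
\end{equation*}
A substitution $u=A_i(y)x$ combined with the two $\simeq$-comparisons above shows
$\|f_i(A_i(y)\cdot)\|_{L^{q_i}_{\omega_i}}\simeq \|A_i^{-1}(y)\|_p^{(\alpha_i+n)/q_i}\|f_i\|_{L^{q_i}_{\omega_i}}$, from which the bound by $\mathcal C_1\prod_i\|f_i\|_{L^{q_i}_{\omega_i}}$ and the upper estimate for the operator norm follow at once. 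The identity $\frac{\alpha_1+n}{q_1}+\cdots+\frac{\alpha_m+n}{q_m}=\frac{\alpha+n}{q}$ will be used here and again in the converse.

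For necessity, I would insert the regularised "formal eigenfunctions"
\begin{equation*}
f_{i,\delta}(x)=|x|_p^{-(\alpha_i+n)/q_i+\delta}\chi_{B_0}(x),\qquad 0<\delta<\min_i(\alpha_i+n)/q_i,
\end{equation*}
and compute directly (using (\ref{espower})) that $\|f_{i,\delta}\|_{L^{q_i}_{\omega_i}}^{q_i}=(1-p^{-n})/(1-p^{-\delta q_i})$. To get a lower estimate on $\mathcal H^p_{\Phi,\vec A}(\vec f_\delta)(x)$ I would fix a large $M\in\mathbb N$, restrict $x$ to $B_{-M}$, and integrate only over
$F_M=\{y:\|A_i(y)\|_p\le p^M\text{ for every }i\}$; on this product set the inequalities $|A_i(y)x|_p\le\|A_i(y)\|_p|x|_p\le 1$ force every cut-off $\chi_{B_0}(A_i(y)x)$ to equal $1$. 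Using the $\simeq$-comparison for $|A_i(y)x|_p$, this yields
\begin{equation*}
\mathcal H^p_{\Phi,\vec A}(\vec f_\delta)(x)\gtrsim |x|_p^{-(\alpha+n)/q+m\delta}\,\mathcal C^{F_M}_{1,\delta},\qquad \mathcal C^{F_M}_{1,\delta}:=\int_{F_M}\frac{\Phi(y)}{|y|_p^n}\prod_{i=1}^m\|A_i^{-1}(y)\|_p^{(\alpha_i+n)/q_i-\delta}dy.
\end{equation*}

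Computing $\|\mathcal H^p_{\Phi,\vec A}(\vec f_\delta)\|_{L^q_\omega}^q$ restricted to $B_{-M}$ via (\ref{espower}) produces a factor $(1-p^{-n})p^{-Mqm\delta}/(1-p^{-qm\delta})$, while the boundedness hypothesis provides the upper bound $\prod_i((1-p^{-n})/(1-p^{-\delta q_i}))^{q/q_i}$ on the right. The key cancellation is that $\sum q/q_i=1$ together with $1-p^{-\delta q_i}\sim \delta q_i\log p$ as $\delta\to 0^+$ makes all $\delta$-singularities disappear, leaving a finite uniform bound
$\mathcal C^{F_M}_{1,\delta}\lesssim \|\mathcal H^p_{\Phi,\vec A}\|\,(qm)^{1/q}\prod_i q_i^{-1/q_i}$ as $\delta\to 0^+$ (the factor $p^{Mqm\delta}\to 1$). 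Fatou's lemma then gives the same bound for $\mathcal C^{F_M}_1$, and since $F_M\nearrow \mathbb Q_p^n$, monotone convergence yields $\mathcal C_1\lesssim \|\mathcal H^p_{\Phi,\vec A}\|$, completing both the qualitative converse and the equivalence of norms.

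\textbf{Main obstacle.} The routine step is sufficiency; the delicate point is managing the indicator $\chi_{B_0}(A_i(y)x)$ in the necessity argument, which couples $x$ and $y$ in a way that prevents the naive "plug in a pure power" computation. The device of first restricting to the double cut-off $(x,y)\in B_{-M}\times F_M$ (on which the indicator is identically $1$) and only afterwards letting $M\to\infty$ is what decouples the two variables; combined with the precise $\delta\to 0^+$ bookkeeping in which the hypothesis $\sum 1/q_i=1/q$ causes all divergent factors to cancel exactly, this is where the proof really lives.
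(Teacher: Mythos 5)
Your argument is correct, and the sufficiency half coincides with the paper's (Minkowski, then H\"older with exponents $q_i/q$, then the change of variables $z=A_i(y)x$ together with (\ref{DK1})--(\ref{DK3}) and (\ref{|det|}) to produce the factor $\|A_i^{-1}(y)\|_p^{(\alpha_i+n)/q_i}$). The necessity half reaches the same conclusion by a genuinely different, "dual" regularisation. The paper takes $f_{i,r}(x)=|x|_p^{-(\alpha_i+n)/q_i-p^{-r}}$ supported \emph{outside} the ball $B_{-\nu_{\vec A}-1}$, restricts $x$ to $\mathbb Q^n_p\setminus B_{r-1}$ and $y$ to $U_r=\{\|A_i(y)\|_p\geq p^{-r}\ \forall i\}$ (so that $|A_i(y)x|_p\geq p^{-\nu_{\vec A}}$ keeps the test functions away from their cut-off), couples the truncation parameter to the perturbation $p^{-r}$, and finishes with a dominated-convergence argument; you instead take $f_{i,\delta}(x)=|x|_p^{-(\alpha_i+n)/q_i+\delta}\chi_{B_0}(x)$ supported \emph{near the origin}, restrict $(x,y)$ to $B_{-M}\times F_M$ with $F_M=\{\|A_i(y)\|_p\leq p^M\ \forall i\}$ so that the indicators are identically $1$, keep $M$ and $\delta$ independent, and pass to the limit by Fatou in $\delta$ followed by monotone convergence in $M$. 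Both arguments hinge on the same cancellation coming from $\sum_i 1/q_i=1/q$ (your factor $(1-p^{-qm\delta})^{1/q}\prod_i(1-p^{-\delta q_i})^{-1/q_i}\to (qm)^{1/q}\prod_i q_i^{-1/q_i}$ is the exact analogue of the paper's $\mathcal T_r\to a>0$). What your version buys is a slightly cleaner limit passage: the uniform bound on $\mathcal C^{F_M}_{1,\delta}$ plus Fatou and monotone convergence is the rigorous way to conclude $\mathcal C_1<\infty$ from a bound that is uniform in the regularisation parameters, whereas the paper's appeal to the dominated convergence theorem at the final step is looser, since an integrable dominating function is precisely what is being established. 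No gaps.
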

\begin{proof}
Firstly, we will prove for the sufficient condition of the theorem. By applying the Minkowski inequality and the H\"{o}lder inequality, we have
\begin{align}
\|\mathcal H^p_{\Phi,\vec A}(\vec f)\|_{L^q_\omega(\mathbb Q_p^n)} &=\Big(\int_{\mathbb Q^n_p}\Big|\int_{\mathbb Q^n_p} \dfrac{\Phi(y)}{|y|^n_p}\prod\limits_{i=1}^{m} f_i(A_i(y)x)dy\Big|^q\omega(x)dx\Big)^{\frac{1}{q}}\nonumber
\\
&\leq \int_{\mathbb Q^n_p}\dfrac{\Phi(y)}{|y|_p^n}\prod\limits_{i=1}^m\|f_i(A_i(y).)\|_{L^{q_i}_{\omega_i}(\mathbb Q^n_p)}dy.
\nonumber
\end{align}
By making the change of variables, we get
\begin{align}
\|f_i(A_i(y).)\|_{L^{q_i}_{\omega_i}(\mathbb Q^n_p)}&=\Big(\int_{\mathbb Q^n_p}|f_i(z)|^{q_i}|A_i^{-1}(y)z|_p^{\alpha_i}.|{\rm det}
A_i^{-1}(y)|_pdz\Big)^{\frac{1}{q_i}}\nonumber
\\
&\leq{\rm max}\{\|A_i^{-1}(y)\|_p^{\alpha_i}, \|A_i(y)\|_p^{-\alpha_i}\}^{\frac{1}{q_i}}.|{\rm det} A_i^{-1}(y)|_p^{\frac{1}{q_i}}\|f_i\|_{L^{q_i}_{\omega_i}(\mathbb Q^n_p)}.
\nonumber
\end{align}
Thus, 
\begin{align}\label{HphiALq}
&\|\mathcal H^p_{\Phi,\vec A}(\vec f)\|_{L^q_\omega(\mathbb Q_p^n)}\nonumber
\\
&\leq \Big(\int_{\mathbb Q^n_p}\dfrac{\Phi(y)}{|y|_p^n}\prod\limits_{i=1}^m {\rm max}\{\|A_i^{-1}(y)\|_p^{\alpha_i}, \|A_i(y)\|_p^{-\alpha_i}\}^{\frac{1}{q_i}}.|{\rm det} A_i^{-1}(y)|_p^{\frac{1}{q_i}}dy\Big)\prod\limits_{i=1}^m \|f_i\|_{L^{q_i}_{\omega_i}(\mathbb Q^n_p)}.
\end{align}
Note that, by (\ref{|det|}) and (\ref{DK2}), we have
\begin{align}\label{Aidet}
{\rm max}\{\|A_i^{-1}(y)\|_p^{\alpha_i}, \|A_i(y)\|_p^{-\alpha_i}\}^{\frac{1}{q_i}}.|{\rm det} A_i^{-1}(y)|_p^{\frac{1}{q_i}}\lesssim \|A_i^{-1}(y)\|_p^{\frac{(\alpha_i+n)}{q_i}}.
\end{align}
This shows that
$$
\|\mathcal H^p_{\Phi,\vec A}(\vec f)\|_{L^q_\omega(\mathbb Q_p^n)}\lesssim \mathcal C_1. \prod\limits_{i=1}^m\|f_i\|_{L^{q_i}_{\omega_i}(\mathbb Q^n_p)}.
$$

Next, to prove the necessary condition of this theorem, for $i=1, ..., m$ and $r\in\mathbb Z^{+}$, let us now take
\begin{align}
f_{i,r}(x) = \left\{ \begin{array}{l}
0,\,\,\,\,\,\,\,\,\,\,\,\,\,\,\,\,\,\,\,\,\,\,\,\,\,\,\,\,\,\,\textit{\rm if}\,|x|_p\leq p^{-\nu_{\vec A}-1},
\\
|x|_p^{-\frac{n}{q_i}-\frac{\alpha_i}{q_i}-p^{-r}},\,\textit{\rm otherwise.}
\end{array} \right.
\nonumber
\end{align}
By a simple calculation, we have
\begin{align}\label{fir}
\|f_{i,r}\|_{L^{q_i}_{\omega_i}(\mathbb Q^n_p)}&=\Big(\int_{\mathbb Q^n_p}|x|_p^{-n-\alpha_i-q_i p^{-r}}\chi_{B^{c}_{-\nu_{\vec A}-1}}(x).|x|_p^{\alpha_i}dx\Big)^{\frac{1}{q_i}}=\Big(\sum\limits_{k\geq -\nu_{\vec A}}\,\int_{S_k}p^{k(-n- q_ip^{-r})}dx\Big)^{\frac{1}{q_i}}\nonumber
\\
&\simeq\Big(\sum\limits_{k\geq -\nu_{\vec A}} p^{k(-n- q_ip^{-r})} p^{kn}\Big)^{\frac{1}{q_i}}=\Big(\sum\limits_{k\geq -\nu_{\vec A}} p^{-kq_ip^{-r}}\Big)^{\frac{1}{q_i}}=\dfrac{p^{\nu_{\vec A}. p^{-r}}}{(1-p^{-q_ip^{-r}})^{\frac{1}{q_i}}}.
\end{align}
Next, we define two sets as follows
$$
S_{x}=\bigcap_{i=1}^m \Big\{y\in\mathbb Q_p^n: |A_i(y)x|_p\geq p^{-\nu_{\vec A}}\Big\},
$$
and
$$
U_r=\Big\{y\in\mathbb Q_n^p: \|A_i(y)\|_p\geq p^{-r},\,\textit{\rm for all}\,i=1,...,m \Big\}.
$$
From this, we derive
\begin{equation}\label{UrSx}
 U_r \subset S_x,\,\,\text{for all}\,\, x\in\mathbb Q^n_p\setminus B_{r-1}.
\end{equation}
In fact, by letting $y\in U_r$, we have $\big\|A_i(y)\big\|_p.|x|_p\geq 1,\,\text{for all}\,\, x\in\mathbb Q^n_p\setminus B_{r-1}.$ Thus, by applying the condition (\ref{DK1}), one has
 \[
|A_i(y)x|_p\geq \big\|A_i^{-1}(y)\big\|^{-1}_p|x|_p=\dfrac{\|A_i(y)\|_p.|x|_p}{\|A_i^{-1}(y)\|_p.\|A_i(y)\|_p}\geq p^{-\nu_{\vec A}},
\]
which confirms the relation (\ref{UrSx}). 
Now, by taking $x\in\mathbb Q_p^n\setminus B_{r-1}$ and using the relation (\ref{UrSx}), we get
\[
{\mathcal H^p_{\Phi ,\vec A }}(\vec{f)} (x) \geq \int\limits_{S_{x}} {\dfrac{\Phi(y)}{|y|_p^n}} \prod\limits_{i = 1}^m {{{| {{A_i}(y)x}|_p}^{- \frac{n}{q_i}-\frac{\alpha_i}{q_i} -p^{-r}}}} dy \geq \int\limits_{U_r} {\dfrac{\Phi(y)}{|y|_p^n}} \prod\limits_{i = 1}^m {{{| {{A_i}(y)x}|_p}^{- \frac{n}{q_i}-\frac{\alpha_i}{q_i} -p^{-r}}}} dy  .
\]
From this, by (\ref{DK3}), one has
\begin{align}
{\mathcal H^p_{\Phi ,\vec A }}(\vec{f)} (x) &\gtrsim \Big(\int_{U_r}\dfrac{\Phi(y)}{|y|_p^n}\prod\limits_{i=1}^m\|A_i^{-1}(y)\|_p^{\frac{n+\alpha_i}{q_i}+p^{-r}}dy\Big).|x|_p^{-\frac{(n+\alpha)}{q}-mp^{-r}}\chi_{\mathbb Q^n_p\setminus B_{r-1}}(x).\nonumber
\\
&:=p^{rm p^{-r}}\mathcal A_{r}.g(x),\nonumber
\end{align}
where 
$$
\mathcal A_r=\int_{U_r}\dfrac{\Phi(y)}{|y|_p^n}\prod\limits_{i=1}^m\|A_i^{-1}(y)\|_p^{\frac{n+\alpha_i}{q_i}}p^{-rm p^{-r}}.\prod\limits_{i=1}^m {\|A_i^{-1}(y)\|_p}^{p^{-r}}dy
$$
and $ g(x)= |x|_p^{-\frac{(n+\alpha)}{q}-mp^{-r}}.\chi_{\mathbb Q^n_p\setminus B_{r-1}}(x).$
By estimating as (\ref{fir}) above, we also have
$$
\|g\|_{L^{q}_\omega(\mathbb Q^n_p)}\simeq \dfrac{p^{-r mp^{-r}}}{(1-p^{-q m p^{-r}})^{\frac{1}{q}}}.
$$
As a consequence above, by (\ref{fir}), we find that
\begin{align}
\|\mathcal H^p_{\Phi,\vec A}(\vec f)\|_{L^q_\omega(\mathbb Q^n_p)}\gtrsim \mathcal A_r \dfrac{\prod\limits_{i=1}^m \|f_{i,r}\|_{L^{q_i}_{\omega_i}(\mathbb Q^n_p)}}{(1-p^{-q m p^{-r}})^{\frac{1}{q}}\prod\limits_{i=1}^m \dfrac{p^{\nu_{\vec A} p^{-r}}}{(1-p^{-q_ip^{-r}})^{\frac{1}{q_i}}}}:=\mathcal A_r. \mathcal T_r.\prod\limits_{i=1}^m \|f_{i,r}\|_{L^{q_i}_{\omega_i}(\mathbb Q^n_p)},\nonumber
\end{align}
where $\mathcal T_r=\dfrac{\prod\limits_{i=1}^m(1-p^{-q_ip^{-r}})^{\frac{1}{q_i}}}{(1-p^{-qmp^{-r}})^{\frac{1}{q}}p^{m.\nu_{\vec A}.p^{-r}}}$.
Note that from $\dfrac{1}{q_1}+ \cdots +\dfrac{1}{q_m}=\dfrac{1}{q}$, it is clear to obtain that
$$
\mathop{\rm lim}\limits_{r\to +\infty} \mathcal T_r=a>0.
$$
Therefore, because $ {\mathcal{H}}^p_{\Phi,\vec A}$ is bounded from $L^{q_1}_{\omega_1}(\mathbb Q_p^n)\times \cdots\times L^{q_m}_{\omega_m}(\mathbb Q^n_p)$ to $L^q_\omega(\mathbb Q_p^n)$, there exists $M>0$ such that $\mathcal A_r\leq M$, for  $r$ sufficiently big. 
On the other hand, by letting $r$ sufficiently large, $y\in U_r$ and by (\ref{DK1}), we get
$$
p^{-rm p^{-r}}.\prod\limits_{i=1}^m {\|A_i^{-1}(y)\|_p}^{p^{-r}}\lesssim p^{\nu_{\vec A}.m.p^{-r}}\lesssim 1.
$$
From this, by the dominated convergence theorem of Lebesgue, we obtain
$$
\int_{\mathbb Q^n_p}\dfrac{\Phi(y)}{|y|_p^n}\prod\limits_{i=1}^m \|A_i^{-1}(y)\|_p^{\frac{\alpha_i+n}{q_i}}dy<\infty,
$$
which finishes the proof of the theorem.
\end{proof}

\begin{theorem}\label{theo-HfLq*}
Let $1\leq q^*,\zeta<\infty$ and $\omega\in A_{\zeta}$ with the finite critical index $r_{\omega}$ for the reverse H\"{o}lder and $\omega(B_\gamma)\lesssim 1$, for all $\gamma\in\mathbb Z$. Assume that $q>q^*\zeta r_{\omega}/(r_{\omega}-1)$, $\delta\in (1,r_\omega)$ and the following condition holds:
\begin{align}
\mathcal C_2&=\int_{\mathbb Q_p^n}\dfrac{\Phi(y)}{|y|_p^n}\prod\limits_{i=1}^m |{\rm det} A_i^{-1}(y)|^{\frac{\zeta}{q_i}}_p\|A_i(y)\|_p^{\frac{\zeta}{q_i}}\times\nonumber
\\
&\times\Big(\chi_{\{\|A_i(y)\|_p\leq 1\}}(y)\|A_i(y)\|_p^{\frac{-n\zeta}{q_i}} +\chi_{\{\|A_i(y)\|_p>1\}(y)}\|A_i(y)\|_p^{\frac{-n(\delta-1)}{q_i\delta}}\Big)dy<\infty.\nonumber
\end{align}
Then, we have ${\mathcal H}^p_{\Phi,\vec A}$ is bounded from $L^{q_1}_{\omega}(\mathbb Q_p^n)\times \cdots\times L^{q_m}_{\omega}(\mathbb Q^n_p)$ to $L^{q^*}_\omega(\mathbb Q_p^n)$.
\end{theorem}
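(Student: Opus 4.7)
The plan is to apply Minkowski's integral inequality to move the $y$-integration outside the $L^{q^*}_\omega$ norm, decompose the resulting norm of a product by H\"{o}lder's inequality into individual norms $\|f_i(A_i(y)\cdot)\|_{L^{r_i}_\omega}$, and then use a change of variables together with the $A_\zeta$ and reverse H\"{o}lder hypotheses on $\omega$ to bound each of these individual norms by $\|f_i\|_{L^{q_i}_\omega}$ times a factor that, when integrated against $\Phi(y)/|y|_p^n$, reproduces the quantity $\mathcal{C}_2$.

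Concretely, after Minkowski's inequality
$$\|\mathcal{H}^p_{\Phi,\vec A}(\vec f)\|_{L^{q^*}_\omega(\mathbb Q_p^n)} \leq \int_{\mathbb Q_p^n} \frac{\Phi(y)}{|y|_p^n} \Big\|\prod_{i=1}^m f_i(A_i(y)\cdot)\Big\|_{L^{q^*}_\omega(\mathbb Q_p^n)}\,dy,$$
I would set $r_i := q_i q^*/q$. Since the assumption $q>q^*\zeta r_\omega/(r_\omega-1)$ forces $q>q^*$, we have $r_i<q_i$ and $\sum_{i=1}^m 1/r_i = 1/q^*$, so H\"{o}lder's inequality on $L^{q^*}_\omega$ gives
$$\Big\|\prod_{i=1}^m f_i(A_i(y)\cdot)\Big\|_{L^{q^*}_\omega} \leq \prod_{i=1}^m \|f_i(A_i(y)\cdot)\|_{L^{r_i}_\omega}.$$
Changing variables $z=A_i(y)x$ in each factor,
$$\|f_i(A_i(y)\cdot)\|_{L^{r_i}_\omega}^{r_i} = |{\rm det}\,A_i^{-1}(y)|_p \int_{\mathbb Q_p^n} |f_i(z)|^{r_i}\,\omega(A_i^{-1}(y)z)\,dz,$$
and a further H\"{o}lder inequality with exponent pair $(q_i/r_i,\,q_i/(q_i-r_i))=(q/q^*,\,q/(q-q^*))$ separating $|f_i|^{r_i}\omega^{r_i/q_i}$ from $\omega^{-r_i/q_i}\omega(A_i^{-1}(y)z)$ produces
$$\|f_i(A_i(y)\cdot)\|_{L^{r_i}_\omega} \leq |{\rm det}\,A_i^{-1}(y)|_p^{1/r_i}\,\|f_i\|_{L^{q_i}_\omega}\,J_i(y)^{(q-q^*)/(q\,r_i)},$$
where $J_i(y):=\int_{\mathbb Q_p^n}\omega(z)^{-q^*/(q-q^*)}\omega(A_i^{-1}(y)z)^{q/(q-q^*)}\,dz$.

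The main obstacle will be bounding $J_i(y)$ so that the product of the individual factors, integrated against $\Phi(y)/|y|_p^n$, matches $\mathcal{C}_2$. My plan is to decompose $\mathbb Q_p^n=\bigsqcup_k S_k$; on each sphere $S_k$ the image $A_i^{-1}(y)S_k$ lies in the ball of radius $\|A_i^{-1}(y)\|_p\,p^k$, and one applies a further H\"{o}lder inequality with exponent pair $(\delta,\delta')$, using the reverse H\"{o}lder inequality $\omega\in RH_\delta$ (available since $\delta\in(1,r_\omega)$) together with Propositions \ref{pro2.3DFan} and \ref{pro2.4DFan} to compare $\omega(A_i^{-1}(y)z)^{q/(q-q^*)}$ with $\omega(B_k)$. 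The hypothesis $\omega(B_\gamma)\lesssim 1$ is what makes the sum over $k$ absolutely convergent, and the quantitative condition $q>q^*\zeta r_\omega/(r_\omega-1)$ is precisely what makes the H\"{o}lder exponent $q/(q-q^*)$ compatible with the $A_\zeta$ and $RH_\delta$ conditions at the endpoint $\delta\to r_\omega$. The two-term dichotomy in $\mathcal{C}_2$ appears naturally because $A_i^{-1}(y)S_k$ is an expansion of $S_k$ when $\|A_i(y)\|_p\leq 1$ and a contraction when $\|A_i(y)\|_p>1$: in the first regime the $A_\zeta$-estimate of Proposition \ref{pro2.3DFan} produces the factor $\|A_i(y)\|_p^{-n\zeta/q_i}$, whereas in the second the reverse H\"{o}lder step yields the factor $\|A_i(y)\|_p^{-n(\delta-1)/(q_i\delta)}$. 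Assembling these per-sphere estimates, converting the $|{\rm det}\,A_i(y)|_p$-contributions via the two-sided inequality $\|A\|_p^{-n}\le|{\rm det}\,A^{-1}|_p\le\|A^{-1}\|_p^n$ together with (\ref{DK1}), and finally integrating against $\Phi(y)/|y|_p^n$ will reproduce $\mathcal{C}_2\prod_{i=1}^m\|f_i\|_{L^{q_i}_\omega}$, completing the proof.
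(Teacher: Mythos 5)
Your opening steps (Minkowski's inequality, the H\"older split with $r_i=q_iq^*/q$, and the change of variables) are sound, but the pivotal step --- the ``further H\"older inequality'' producing $J_i(y)=\int_{\mathbb Q_p^n}\omega(z)^{-q^*/(q-q^*)}\omega(A_i^{-1}(y)z)^{q/(q-q^*)}\,dz$ --- does not work, and the plan you sketch for bounding $J_i(y)$ does not repair it. The two powers of $\omega$ are evaluated at different points and their exponents sum to $1$, so even in the most favourable case $A_i(y)=I_n$ one gets $J_i(y)=\int_{\mathbb Q_p^n}\omega\,dz$, and for general $A_i(y)$ the integrand contains the uncompensated pointwise negative power $\omega(z)^{-q^*/(q-q^*)}$. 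Neither Proposition \ref{pro2.3DFan} nor Proposition \ref{pro2.4DFan} nor the reverse H\"older condition controls such a negative power of an $A_\zeta$ weight; that would require the dual half of the $A_\zeta$ condition, which bounds only ball averages of $\omega^{-1/(\zeta-1)}$ and only for exponents $q^*/(q-q^*)\le 1/(\zeta-1)$, and your sphere decomposition never engages with this term at all. There is also a bookkeeping obstruction: your change of variables extracts $|\det A_i^{-1}(y)|_p^{1/r_i}=|\det A_i^{-1}(y)|_p^{q/(q_iq^*)}$, whereas $\mathcal C_2$ carries $|\det A_i^{-1}(y)|_p^{\zeta/q_i}$; since the hypothesis forces $q>\zeta q^*$, these powers differ, and the discrepancy would have to be produced by $J_i(y)$, which you have not shown.

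The paper avoids both problems by a different order of operations: it works on a ball $B_R$ and lets $R\to\infty$ only at the very end (monotone convergence), and inside $B_R$ it first strips the weight off entirely by H\"older with the reverse-H\"older exponent $r$ chosen so that $q=\zeta q^*r'$, namely $\bigl(\int_{B_R}\prod_i|f_i(A_i(y)x)|^{q^*}\omega\,dx\bigr)^{1/q^*}\lesssim\bigl(\int_{B_R}\prod_i|f_i(A_i(y)x)|^{q/\zeta}\,dx\bigr)^{\zeta/q}\omega(B_R)^{1/q^*}|B_R|^{-\zeta/q}$. The change of variables is then performed on \emph{unweighted} $L^{q_i/\zeta}$ norms, which is exactly where the powers $|\det A_i^{-1}(y)|_p^{\zeta/q_i}$ come from; the weight is reinstalled afterwards via Proposition \ref{pro2.4DFan} with $\ell=\zeta$, and the resulting ratio $\bigl(\omega(B_R)/\omega(B_{R+k_{A_i}})\bigr)^{1/q_i}$ is estimated by Proposition \ref{pro2.3DFan} in the two regimes $\|A_i(y)\|_p\le 1$ and $\|A_i(y)\|_p>1$, which is where the two-term dichotomy in $\mathcal C_2$ and the hypothesis $\omega(B_\gamma)\lesssim 1$ (used as $\omega(B_R)^{1/q^*}\lesssim\omega(B_R)^{1/q}$) enter. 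You should reorganize your argument along these lines rather than trying to compare $\omega(A_i^{-1}(y)z)$ with $\omega(z)$ directly.
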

\begin{proof}
For any $R\in\mathbb Z$, by the Minkowski inequality, we have
$$
\|\mathcal H^p_{\Phi,\vec A}(\vec f)\|_{L^{q^*}_\omega(B_R)}\leq \int_{\mathbb Q_p^n}\dfrac{\Phi(y)}{|y|_p^n}\Big(\int_{B_R}\prod\limits_{i=1}^m |f_i(A_i(y)x)|^{q^*}\omega(x)dx\Big)^{\frac{1}{q^*}}dy.
$$
From the inequality $q>q^*\zeta r_{\omega}/(r_{\omega}-1)$, there exists $r\in (1,r_\omega)$ such that $q=\zeta q^* r'$. Then, by the H\"{o}lder inequality and the reverse H\"{o}lder condition, we have
\begin{align}
\Big(\int_{B_R}\prod\limits_{i=1}^m |f_i(A_i(y)x)|^{q^*}\omega(x)dx\Big)^{\frac{1}{q^*}}&\leq \Big(\int_{B_R}\prod\limits_{i=1}^m|f_i(A_i(y)x)|^{\frac{q}{\zeta}}dx\Big)^{\frac{\zeta}{q}}.\Big(\int_{B_R}\omega(x)^rdx\Big)^{\frac{1}{rq^*}}\nonumber
\\
&\lesssim \Big(\int_{B_R}\prod\limits_{i=1}^m|f_i(A_i(y)x)|^{\frac{q}{\zeta}}dx\Big)^{\frac{\zeta}{q}}\omega(B_R)^{\frac{1}{q^*}}|B_R|^{-\frac{\zeta}{q}}.\nonumber
\end{align}
Next, by making the H\"{o}lder inequality and the change of variables formula, and applying Proposition \ref{pro2.4DFan}, we have
\begin{align}
&\Big(\int_{B_R}\prod\limits_{i=1}^m|f_i(A_i(y)x)|^{\frac{q}{\zeta}}dx\Big)^{\frac{\zeta}{q}}\leq \prod\limits_{i=1}^m \Big(\int_{B_R}|f_i(A_i(y)x)|^{\frac{q_i}{\zeta}}dx\Big)^{\frac{\zeta}{q_i}}\nonumber
\\
&=\prod\limits_{i=1}^m\Big(\int_{A_i(y)B_R}|f_i(z)|^{\frac{q_i}{\zeta}}|{\rm det} A_i^{-1}(y)|_pdz\Big)^{\frac{\zeta}{q_i}}\leq \prod\limits_{i=1}^m|{\rm det} A_i^{-1}(y)|_p^{\frac{\zeta}{q_i}}\|f_i\|_{L^{\frac{q_i}{\zeta}}(B_{R+k_{A_i}})}\nonumber
\\
&\lesssim \prod\limits_{i=1}^m|{\rm det} A_i^{-1}(y)|_p^{\frac{\zeta}{q_i}} |B_{R+k_{A_i}}|^{\frac{\zeta}{q_i}}\omega(B_{R+k_{A_i}})^{-\frac{1}{q_i}}\|f_i\|_{L^{q_i}_{\omega}(B_{R+k_{A_i}})},\nonumber
\end{align}
where $k_{A_i}(y)={\rm log_p}\|A_i(y)\|_p$.
Thus, by $\frac{|B_{R+k_{A_i}}|}{|B_R|}\simeq \frac{p^{(R+k_{A_i})n}}{p^{Rn}}=\|A_i(y)\|_p^n$, we infer that
\begin{align}\label{HfLq*}
&\|\mathcal H^p_{\Phi,\vec A}(\vec f)\|_{L^{q^*}_\omega(B_R)}\nonumber
\\
&\lesssim \omega(B_R)^{\frac{1}{q^*}}\int_{\mathbb Q^n_p}\dfrac{\Phi(y)}{|y|_p^n} \prod\limits_{i=1}^m|{\rm det} A_i^{-1}(y)|_p^{\frac{\zeta}{q_i}}\Big(\frac{|B_{R+k_{A_i}}|}{|B_R|}\Big)^{\frac{\zeta}{q_i}}\omega(B_{R+k_{A_i}})^{-\frac{1}{q_i}}\|f_i\|_{L^{q_i}_{\omega}(B_{R+k_{A_i}})}dy
\nonumber
\\
&\lesssim \omega(B_R)^{\frac{1}{q^*}}\int_{\mathbb Q^n_p}\dfrac{\Phi(y)}{|y|_p^n} \prod\limits_{i=1}^m|{\rm det} A_i^{-1}(y)|_p^{\frac{\zeta}{q_i}}\|A_i(y)\|_p^{\frac{\zeta n}{q_i}}\omega(B_{R+k_{A_i}})^{-\frac{1}{q_i}}\|f_i\|_{L^{q_i}_{\omega}(B_{R+k_{A_i}})}dy.
\end{align}
On the other hand, by $q>q^*\geq 1$ and $\omega(B_R)\lesssim 1$ for all $R\in\mathbb Z$, we imply that $\omega(B_R)^{\frac{1}{q^*}}\lesssim \omega(B_R)^{\frac{1}{q}}$. Hence, by (\ref{HfLq*}), we get
\begin{align}
&\|\mathcal H^p_{\Phi,\vec A}(\vec f)\|_{L^{q^*}_\omega(B_R)}\nonumber
\\
&\lesssim \Big(\int_{\mathbb Q^n_p}\dfrac{\Phi(y)}{|y|_p^n} \prod\limits_{i=1}^m|{\rm det} A_i^{-1}(y)|_p^{\frac{\zeta}{q_i}}\|A_i(y)\|_p^{\frac{\zeta n}{q_i}}\Big(\frac{\omega(B_R)}{\omega(B_{R+k_{A_i}})}\Big)^{\frac{1}{q_i}}dy\Big)\prod\limits_{i=1}^m\|f_i\|_{L^{q_i}_{\omega}(\mathbb Q^n_p)}.\nonumber
\end{align}
Next, for $i=1, ..., m$, by using Proposition \ref{pro2.3DFan}, we have
\begin{align}\label{omegaB_Morrey-Leb}
\Big(\frac{\omega(B_R)}{\omega(B_{R+k_{A_i}})}\Big)^{\frac{1}{q_i}}\lesssim \left\{ \begin{array}{l}
\Big(\dfrac{|B_R|}{|B_{R+k_{A_i}}|}\Big)^{\frac{\zeta}{q_i}}\lesssim p^{\frac{(R-R-k_{A_i})n\zeta}{q_i}}=\|A_i(y)\|_p^{\frac{-n\zeta}{q_i}},\\
\textit{\rm if}\, \|A_i(y)\|_p\leq 1,
\\
\\
\Big(\dfrac{|B_R|}{|B_{R+k_{A_i}}|}\Big)^{\frac{(\delta-1)}{q_i\delta}}\lesssim p^{\frac{(R-R-k_{A_i})n(\delta-1)}{q_i\delta}}=\|A_i(y)\|_p^{\frac{-n(\delta-1)}{q_i\delta}},
\\
\textit{\rm otherwise}.
\end{array} \right.
\end{align}
Hence, by letting $R\to +\infty$ and applying the monotone convergence theorem of Lebesgue, we obtain that
$$
\|\mathcal H^p_{\Phi,\vec A}(\vec f)\|_{L^{q^*}_\omega(\mathbb Q^n_p)}\lesssim \mathcal C_2.\prod\limits_{i=1}^m\|f_i\|_{L^{q_i}_{\omega}(\mathbb Q^n_p)},
$$
which completes the proof of the theorem.
\end{proof}
\begin{theorem}\label{theo-HfMorrey}
Let $\omega_i,\omega$ be as Theorem \ref{theo-HfLq} and $\lambda_i\in (\frac{-1}{q_i},0)$ for all $i=1, ... m$. Assume that 
\begin{align}\label{lambdaMorrey}
(\alpha+n)\lambda=(\alpha_1+n)\lambda_1 + \cdots + (\alpha_m+n)\lambda_m.
\end{align}
Then, $ {\mathcal{H}}^p_{\Phi,\vec A}$ is bounded from ${\mathop{B}\limits^.}^{q_1,\lambda_1}_{\omega_1}(\mathbb Q_p^n)\times \cdots\times {\mathop{B}\limits^.}^{q_m, \lambda_m}_{\omega_m}(\mathbb Q^n_p)$ to ${\mathop{B}\limits^.}^{q,\lambda}_\omega(\mathbb Q_p^n)$ if and only if
$$
\mathcal C_3=\int_{\mathbb Q^n_p}\dfrac{\Phi(y)}{|y|_p^n}\prod\limits_{i=1}^m \|A_i^{-1}(y)\|_p^{-(\alpha_i+n)\lambda_i}dy<\infty.
$$
Furthermore, ${\big\|{\mathcal H}^p_{\Phi,\vec A}\big\|_{{\mathop{B}\limits^.}^{q_1,\lambda_1}_{\omega_1}(\mathbb Q_p^n)\times \cdots\times {\mathop{B}\limits^.}^{q_m, \lambda_m}_{\omega_m}(\mathbb Q^n_p)\to {\mathop{B}\limits^.}^{q,\lambda}_\omega(\mathbb Q_p^n)}\simeq \mathcal C_{3}}.$
\end{theorem}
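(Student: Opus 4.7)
The plan is to mirror the argument of Theorem~\ref{theo-HfLq}, but carried out at the level of weighted $\lambda$-central Morrey norms. For sufficiency I will run a direct Minkowski--H\"older--change-of-variable estimate that produces precisely the constant $\mathcal C_3$; for necessity I will insert the natural Morrey-extremal tuple of monomials and read off $\mathcal C_3$ in the output.

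For the sufficient direction, fix $\gamma\in\mathbb Z$. First I apply Minkowski's integral inequality to pull the $y$-integration outside the $L^q_\omega(B_\gamma)$-norm, and then apply the weighted H\"older inequality, which is valid because $\omega=\prod_{i=1}^{m}\omega_i^{q/q_i}$ for these power weights. This reduces matters to estimating each factor $\|f_i(A_i(y)\cdot)\|_{L^{q_i}_{\omega_i}(B_\gamma)}$. I then change variables $z=A_i(y)x$ and use $|A_i^{-1}(y)z|_p\simeq\|A_i(y)\|_p^{-1}|z|_p$ together with (\ref{|det|}) and (\ref{DK2}) to replace $|\det A_i^{-1}(y)|_p$ by $\|A_i^{-1}(y)\|_p^{n}$ up to constants; since $A_i(y)B_\gamma\subseteq B_{\gamma+k_{A_i(y)}}$ with $k_{A_i(y)}=\log_p\|A_i(y)\|_p$, the Morrey-norm definition and (\ref{espower}) give
\[
\|f_i(A_i(y)\cdot)\|_{L^{q_i}_{\omega_i}(B_\gamma)}\lesssim \|A_i^{-1}(y)\|_p^{-(\alpha_i+n)\lambda_i}\,p^{\gamma(\alpha_i+n)(1/q_i+\lambda_i)}\,\|f_i\|_{{\mathop{B}\limits^.}^{q_i,\lambda_i}_{\omega_i}(\mathbb Q^n_p)}.
\]
Taking the product over $i$, the identity (\ref{lambdaMorrey}) combined with $\sum_{i}1/q_i=1/q$ forces the $\gamma$-dependent factor to collapse to exactly $\omega(B_\gamma)^{(1+\lambda q)/q}$; dividing by this quantity and supremizing over $\gamma$ yields the desired Morrey-norm bound with prefactor $\mathcal C_3$.

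For the necessary direction, I take the test tuple $f_i(x)=|x|_p^{(\alpha_i+n)\lambda_i}$, $i=1,\dots,m$. A short computation based on (\ref{espower}) shows $\|f_i\|_{{\mathop{B}\limits^.}^{q_i,\lambda_i}_{\omega_i}(\mathbb Q^n_p)}\simeq 1$; here the constraints $\alpha_i>-n$ and $-1/q_i<\lambda_i<0$ are both used (the former for local integrability of the weighted monomial, the latter to make the Morrey ratio $\gamma$-independent). Using the pointwise equivalence $|A_i(y)x|_p\simeq \|A_i^{-1}(y)\|_p^{-1}|x|_p$, which follows from (\ref{DK1}) together with (\ref{DK3}), and invoking (\ref{lambdaMorrey}), one obtains
\[
\mathcal H^p_{\Phi,\vec A}(\vec f)(x)\simeq \Bigg(\int_{\mathbb Q^n_p}\frac{\Phi(y)}{|y|_p^n}\prod_{i=1}^{m}\|A_i^{-1}(y)\|_p^{-(\alpha_i+n)\lambda_i}\,dy\Bigg)|x|_p^{(\alpha+n)\lambda}=\mathcal C_3\,|x|_p^{(\alpha+n)\lambda}.
\]
Since $\||x|_p^{(\alpha+n)\lambda}\|_{{\mathop{B}\limits^.}^{q,\lambda}_\omega(\mathbb Q^n_p)}\simeq 1$ by the same monomial computation, the assumed boundedness of $\mathcal H^p_{\Phi,\vec A}$ forces $\mathcal C_3<\infty$ and delivers $\mathcal C_3\lesssim \|\mathcal H^p_{\Phi,\vec A}\|$, which matches the upper bound.

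The main obstacle will be the exponent bookkeeping in the sufficiency step: after the change of variables I must collect powers of $\|A_i(y)\|_p$, $\|A_i^{-1}(y)\|_p$, $|\det A_i^{-1}(y)|_p$, the scaling factor $p^{\gamma(\alpha_i+n)(1/q_i+\lambda_i)}$, and the Morrey normalisation $\omega(B_\gamma)^{(1+\lambda q)/q}$, and verify that (\ref{lambdaMorrey}) is precisely the algebraic identity that makes the residual $\gamma$-dependence cancel. A secondary care-point is that each time I replace $\|A_i(y)\|_p$ by $\|A_i^{-1}(y)\|_p^{-1}$ or conversely, I must systematically invoke the two-sided bound from (\ref{DK1}) in order not to be tripped up by the sign of $\alpha_i$ or $\lambda_i$ in the various exponents.
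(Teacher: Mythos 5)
Your proposal is correct and follows essentially the same route as the paper: the sufficiency part is the same Minkowski--H\"older--change-of-variables computation (you merely fold the ratio $\omega_i(B_{\gamma+k_{A_i}})^{1/q_i+\lambda_i}/\omega(B_\gamma)^{1/q+\lambda}$ into each factor rather than collecting it as a separate term $\mathcal B_i(y)$ at the end, and the identity (\ref{lambdaMorrey}) cancels the $\gamma$-dependence exactly as in the paper), and the necessity part uses the identical test functions $f_i(x)=|x|_p^{(\alpha_i+n)\lambda_i}$ with the same pointwise lower bound via (\ref{DK1}) and (\ref{DK3}). No gaps.
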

\begin{proof}
We start with the proof for the sufficient condition of the theorem. For $\gamma\in\mathbb Z$, by estimating as (\ref{HphiALq}) and (\ref{Aidet}) above, we have
\begin{align}
&\|\mathcal H^p_{\Phi,\vec A}(\vec f)\|_{L^q_\omega(B_\gamma)}\nonumber
\\
&\leq \int_{\mathbb Q^n_p}\dfrac{\Phi(y)}{|y|_p^n}\prod\limits_{i=1}^m{\rm max}\{\|A_i^{-1}(y)\|_p^{\alpha_i}, \|A_i(y)\|_p^{-\alpha_i}\}^{\frac{1}{q_i}}.|{\rm det} A_i^{-1}(y)|_p^{\frac{1}{q_i}}\prod\limits_{i=1}^m \|f_i\|_{L^{q_i}_{\omega_i}(A_i(y)B_{\gamma})}dy.\nonumber 
\\
&\lesssim \int_{\mathbb Q^n_p}\dfrac{\Phi(y)}{|y|_p^n}\prod\limits_{i=1}^m\|A_i^{-1}(y)\|_p^{\frac{(\alpha_i+n)}{q_i}}\prod\limits_{i=1}^m \|f_i\|_{L^{q_i}_{\omega_i}(B_{\gamma+k_{A_i}})}dy.\nonumber 
\end{align}
This implies that
\begin{align}\label{esHfMorrey-theo3.3}
\dfrac{1}{\omega(B_\gamma)^{\frac{1}{q}+\lambda}}\|\mathcal H^p_{\Phi,\vec A}(\vec f)\|_{L^q_\omega(B_\gamma)}&\lesssim \int_{\mathbb Q^n_p}\dfrac{\Phi(y)}{|y|_p^n}\Big(\prod\limits_{i=1}^m\|A_i^{-1}(y)\|_p^{\frac{(\alpha_i+n)}{q_i}}\Big)\mathcal B_i(y)\times\nonumber
\\
&\times\Big(\prod\limits_{i=1}^m \dfrac{1}{\omega_i(B_{\gamma+k_{A_i}})^{\frac{1}{q_i}+\lambda_i}}\|f_i\|_{L^{q_i}_{\omega_i}(B_{\gamma+k_{A_i}})}\Big)dy, 
\end{align}
where $\mathcal B_i(y)= \dfrac{\prod\limits_{i=1}^m\omega_i(B_{\gamma+k_{A_i}})^{\frac{1}{q_i}+\lambda_i}}{\omega(B_{\gamma})^{\frac{1}{q}+\lambda}}.$
\\
On the other hand, by hypothesis (\ref{lambdaMorrey}), we immediately get
$$
\sum\limits_{i=1}^m \big(\alpha_i + n\big)\big(\frac{1}{q_i}+\lambda_i\big)= \big(\alpha+n\big)\big(\frac{1}{q}+\lambda\big).
$$
Consequently, by the estimation (\ref{espower}) and (\ref{DK1}), we have
\begin{align}
\mathcal B_i(y)&\lesssim \dfrac{p^{\sum\limits_{i=1}^m (\gamma+k_{A_i})(\alpha_i+n)(\frac{1}{q_i}+\lambda_i)}}{p^{\gamma(\alpha+n)(\frac{1}{q}+\lambda)}}= \dfrac{p^{\sum\limits_{i=1}^m \gamma(\alpha_i+n)(\frac{1}{q_i}+\lambda_i)}p^{\sum\limits_{i=1}^m k_{A_i}(\alpha_i+n)(\frac{1}{q_i}+\lambda_i)}}{p^{\gamma(\alpha+n)(\frac{1}{q}+\lambda)}}\nonumber
\\
&=\prod\limits_{i=1}^m\|A_i(y)\|_p^{(\alpha_i+n)(\frac{1}{q_i}+\lambda_i)}\lesssim \prod\limits_{i=1}^m\|A_i^{-1}(y)\|_p^{-(\alpha_i+n)(\frac{1}{q_i}+\lambda_i)}.\nonumber
\end{align}
Hence, by (\ref{esHfMorrey-theo3.3}), one has
$$
\|\mathcal H^p_{\Phi,\vec A}(\vec f)\|_{{\mathop{B}\limits^.}^{q,\lambda}_{\omega}(\mathbb Q_p^n)}\lesssim \mathcal C_3 \prod\limits_{i=1}^m \|f_i\|_{{\mathop{B}\limits^.}^{q_i,\lambda_i}_{\omega_i}(\mathbb Q_p^n)}.
$$
Conversely, suppose that $ {\mathcal{H}}^p_{\Phi,\vec A}$ is bounded from ${\mathop{B}\limits^.}^{q_1,\lambda_1}_{\omega_1}(\mathbb Q_p^n)\times \cdots\times {\mathop{B}\limits^.}^{q_m, \lambda_m}_{\omega_m}(\mathbb Q^n_p)$ to ${\mathop{B}\limits^.}^{q,\lambda}_\omega(\mathbb Q_p^n)$. For $i=1, ..., m$, let us choose the functions as follows
\begin{align}
f_i(x)=|x|_p^{(\alpha_i+n)\lambda_i}.\nonumber
\end{align}
Then, by (\ref{espower}), it is not difficult to show that
\begin{align*}
\|f_i\|_{{\mathop{B}\limits^.}^{q_i,\lambda_i}_{\omega_i}(\mathbb Q^n_p)}&=\mathop{\rm sup}\limits_{\gamma\in\mathbb Z}\dfrac{1}{\omega_i(B_\gamma)^{\frac{1}{q_i}+\lambda_i}}\Big(\int_{B_{\gamma}}|x|_p^{(\alpha_i+n)\lambda_iq_i+\alpha_i}dx\Big)^{\frac{1}{q_i}}\nonumber
\\
&\simeq \mathop{\rm sup}\limits_{\gamma\in\mathbb Z}\dfrac{p^{\gamma(\,(\alpha_i+n)\lambda_iq_i+\alpha_i +n)\frac{1}{q_i} }}{p^{\gamma(\alpha_i+n)(\frac{1}{q_i}+\lambda_i)}}=1,
\end{align*}
and similarly, we also have
\begin{align}\label{cal-fi-Morrey}
\|\,|\cdot|_p^{(\alpha+n)\lambda}\|_{{\mathop{B}\limits^.}^{q,\lambda}_{\omega}(\mathbb Q^n_p)}\simeq 1.
\end{align}
Next, by choosing $ f_i$'s and using (\ref{DK3}) and (\ref{lambdaMorrey}), we have
\begin{align}
\mathcal H^p_{\Phi,\vec A}(\vec f)(x)&=\int_{\mathbb Q^n_p}\dfrac{\Phi(y)}{|y|_p^n}\prod\limits_{i=1}^m |A_i(y)x|_p^{(\alpha_i+n)\lambda_i}dy\nonumber
\\
&\gtrsim \int_{\mathbb Q^n_p}\dfrac{\Phi(y)}{|y|_p^n}\prod\limits_{i=1}^m \|A_i^{-1}(y)\|_p^{-(\alpha_i+n)\lambda_i}.|x|_p^{(\alpha_i+n)\lambda_i}dy=\mathcal C_3. |x|_p^{(\alpha+n)\lambda}.\nonumber
\end{align}
Thus, by (\ref{cal-fi-Morrey}), it follows that
\begin{align}
\|\mathcal H^p_{\Phi,\vec A}(\vec f)\|_{{\mathop{B}\limits^.}^{q,\lambda}_{\omega}(\mathbb Q^n_p)}\gtrsim \mathcal C_3.\|\,|\cdot|_p^{(\alpha+n)\lambda}\|_{{\mathop{B}\limits^.}^{q,\lambda}_{\omega}(\mathbb Q^n_p)}\gtrsim \mathcal C_3\prod\limits_{i=1}^m \|f_i\|_{{\mathop{B}\limits^.}^{q_i,\lambda_i}_{\omega_i}(\mathbb Q^n_p)}.\nonumber
\end{align}
This gives that $\mathcal C_3<\infty$. Hence, the theorem is completely proved.
\end{proof}
\begin{theorem}\label{theo-HfMorreyq*}
Let $1\leq q^*,\zeta<\infty$, $\lambda_i\in (-\frac{1}{q_i},0)$, for all $i=1, ...,m$ and $\omega\in A_{\zeta}$ with the finite critical index $r_{\omega}$ for the reverse H\"{o}lder. Assume that $q>q^*\zeta r_{\omega}/(r_{\omega}-1)$, $\delta\in (1,r_\omega)$ and the following two conditions are true:
\begin{align}\label{lambdaMorreyAp}
\lambda=\lambda_1 + \cdots +\lambda_m.
\end{align}
\begin{align}
\mathcal C_4&=\int_{\mathbb Q_p^n}\dfrac{\Phi(y)}{|y|_p^n}\prod\limits_{i=1}^m |{\rm det} A_i^{-1}(y)|^{\frac{\zeta}{q_i}}_p\|A_i(y)\|_p^{\frac{\zeta}{q_i}}\times\nonumber
\\
&\times\Big(\chi_{\{\|A_i(y)\|_p\leq 1\}}(y)\|A_i(y)\|_p^{n\zeta\lambda_i} +\chi_{\{\|A_i(y)\|_p>1\}(y)}\|A_i(y)\|_p^{\frac{n\lambda_i(\delta-1)}{\delta}}\Big)dy<\infty.\nonumber
\end{align}
Then, ${\mathcal H}^p_{\Phi,\vec A}$ is bounded from ${\mathop{B}\limits^.}^{q_1,\lambda_1}_{\omega}(\mathbb Q_p^n)\times \cdots\times {\mathop{B}\limits^.}^{q_m, \lambda_m}_{\omega}(\mathbb Q^n_p)$ to ${\mathop{B}\limits^.}^{q^*,\lambda}_\omega(\mathbb Q_p^n)$ .
\end{theorem}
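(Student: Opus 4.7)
\medskip

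The plan is to follow the same scheme as the proof of Theorem~\ref{theo-HfLq*}, replacing its final extension to $\mathbb Q_p^n$ by the Morrey-norm comparison technique that was used in Theorem~\ref{theo-HfMorrey}. Fix $\gamma\in\mathbb Z$; I will control $\omega(B_\gamma)^{-1/q^*-\lambda}\|\mathcal H^p_{\Phi,\vec A}(\vec f)\|_{L^{q^*}_\omega(B_\gamma)}$ uniformly in $\gamma$ by $\mathcal C_4\prod_i\|f_i\|_{{\mathop{B}\limits^.}^{q_i,\lambda_i}_{\omega}}$.

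The opening steps mirror those of Theorem~\ref{theo-HfLq*}. First, Minkowski's inequality moves $\Phi(y)/|y|_p^n$ outside the $L^{q^*}_\omega(B_\gamma)$-norm. From the hypothesis $q>q^*\zeta r_\omega/(r_\omega-1)$, I pick $r\in(1,r_\omega)$ with $q=\zeta q^*r'$ and split the weighted norm by the H\"older pair $(q/(\zeta q^*),r)$, estimating the factor $(\int_{B_\gamma}\omega^r)^{1/(rq^*)}$ by $\omega(B_\gamma)^{1/q^*}|B_\gamma|^{-\zeta/q}$ via the reverse H\"older condition. Multilinear H\"older with $\sum\zeta/q_i=\zeta/q$ separates the product, the change of variables $z=A_i(y)x$ contributes $|\det A_i^{-1}(y)|_p$ and replaces $B_\gamma$ by $A_i(y)B_\gamma\subset B_{\gamma+k_{A_i}}$, and Proposition~\ref{pro2.4DFan} (applicable since $\omega\in A_\zeta$) converts the unweighted $L^{q_i/\zeta}$-norm into the weighted $L^{q_i}_\omega$-norm. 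After the factor $|B_{\gamma+k_{A_i}}|/|B_\gamma|\simeq\|A_i(y)\|_p^n$ is used and $|B_\gamma|$ cancels, one arrives at
\begin{align*}
\|\mathcal H^p_{\Phi,\vec A}(\vec f)\|_{L^{q^*}_\omega(B_\gamma)}
\lesssim \omega(B_\gamma)^{\frac{1}{q^*}}\!\!\int_{\mathbb Q_p^n}\!\!\frac{\Phi(y)}{|y|_p^n}\prod_{i=1}^m|\det A_i^{-1}(y)|_p^{\frac{\zeta}{q_i}}\|A_i(y)\|_p^{\frac{n\zeta}{q_i}}\omega(B_{\gamma+k_{A_i}})^{-\frac{1}{q_i}}\|f_i\|_{L^{q_i}_\omega(B_{\gamma+k_{A_i}})}dy.
\end{align*}

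Now the Morrey adaptation begins. Using $\|f_i\|_{L^{q_i}_\omega(B_{\gamma+k_{A_i}})}\le\omega(B_{\gamma+k_{A_i}})^{1/q_i+\lambda_i}\|f_i\|_{{\mathop{B}\limits^.}^{q_i,\lambda_i}_\omega}$, the factor $\omega(B_{\gamma+k_{A_i}})^{-1/q_i}$ collapses to $\omega(B_{\gamma+k_{A_i}})^{\lambda_i}$. Dividing by $\omega(B_\gamma)^{1/q^*+\lambda}$ cancels the $\omega(B_\gamma)^{1/q^*}$ in front, and condition (\ref{lambdaMorreyAp}) lets me distribute $\omega(B_\gamma)^{-\lambda}=\prod_i\omega(B_\gamma)^{-\lambda_i}$ inside the product, leaving the factor $\prod_i(\omega(B_{\gamma+k_{A_i}})/\omega(B_\gamma))^{\lambda_i}$. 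Since $\lambda_i<0$, I need \emph{lower} bounds on this ratio, which Proposition~\ref{pro2.3DFan} supplies in two cases: when $\|A_i(y)\|_p\le 1$ (so $B_{\gamma+k_{A_i}}\subset B_\gamma$), the $A_\zeta$ half gives $\omega(B_{\gamma+k_{A_i}})/\omega(B_\gamma)\gtrsim\|A_i(y)\|_p^{n\zeta}$, hence $(\cdot)^{\lambda_i}\lesssim\|A_i(y)\|_p^{n\zeta\lambda_i}$; when $\|A_i(y)\|_p>1$ (so $B_\gamma\subset B_{\gamma+k_{A_i}}$), the $RH_\delta$ half yields $\omega(B_{\gamma+k_{A_i}})/\omega(B_\gamma)\gtrsim\|A_i(y)\|_p^{n(\delta-1)/\delta}$ and $(\cdot)^{\lambda_i}\lesssim\|A_i(y)\|_p^{n\lambda_i(\delta-1)/\delta}$. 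Combining the two cases into a single $\chi_{\{\|A_i(y)\|_p\le 1\}}+\chi_{\{\|A_i(y)\|_p>1\}}$ expression produces precisely the integrand defining $\mathcal C_4$, and taking the supremum over $\gamma$ finishes the argument.

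The main obstacle is the direction of the weight comparison in Step~7. Because $\lambda_i<0$, the usual upper bounds on $\omega(B_{\gamma+k_{A_i}})/\omega(B_\gamma)$ are useless; I must carefully select the correct half of Proposition~\ref{pro2.3DFan} in each of the two regimes $k_{A_i}\le 0$ and $k_{A_i}>0$, which is exactly why the $A_\zeta$ and $RH_\delta$ properties both enter (unlike the power-weight Theorem~\ref{theo-HfMorrey}, where the exponents can be computed explicitly via (\ref{espower})). Apart from this, the bookkeeping of the exponents $\zeta/q_i$, $n\zeta/q_i$, and the $\lambda_i$-terms requires care, but each step is parallel either to Theorem~\ref{theo-HfLq*} or to Theorem~\ref{theo-HfMorrey}.
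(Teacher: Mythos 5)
Your proposal is correct and follows essentially the same route as the paper: it reuses the estimate culminating in (\ref{HfLq*}) (Minkowski, reverse H\"older, multilinear H\"older, change of variables, Proposition \ref{pro2.4DFan}), inserts the Morrey norms via $\lambda=\sum_i\lambda_i$, and then bounds $\bigl(\omega(B_{\gamma+k_{A_i}})/\omega(B_\gamma)\bigr)^{\lambda_i}$ exactly as in (\ref{theo3.4-omega}) by choosing the appropriate half of Proposition \ref{pro2.3DFan} according to the sign of $k_{A_i}$ and the negativity of $\lambda_i$. Your explicit remark about needing \emph{lower} bounds on the weight ratio is precisely the (implicit) content of the paper's case analysis, and your exponent $\|A_i(y)\|_p^{n\zeta/q_i}$ matches what the paper actually derives in its proof.
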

\begin{proof}
For $\gamma\in\mathbb Z $, by estimating as (\ref{HfLq*}) above and using the relation (\ref{lambdaMorreyAp}), we obtain that
\begin{align}
&\dfrac{1}{\omega(B_\gamma)^{\frac{1}{q^*}+\lambda}}\|\mathcal H^p_{\Phi,\vec A}(\vec f)\|_{L^{q^*}_\omega(B_\gamma)}\nonumber
\\
&\lesssim \int_{\mathbb Q_p^n}\dfrac{\Phi(y)}{|y|_p^n} \prod\limits_{i=1}^m|{\rm det} A_i^{-1}(y)|_p^{\frac{\zeta}{q_i}}\|A_i(y)\|_p^{\frac{\zeta n}{q_i}}\Big(\dfrac{\omega(B_{\gamma+k_{A_i}})}{\omega(B_{\gamma})}\Big)^{\lambda_i}\dfrac{1}{\omega(B_{\gamma+k_{A_i}})^{\frac{1}{q_i}+\lambda_i}}\|f_i\|_{L^{q_i}_{\omega}(B_{\gamma+k_{A_i}})}dy\nonumber
\\
&\lesssim \Big(\int_{\mathbb Q_p^n}\dfrac{\Phi(y)}{|y|_p^n} \prod\limits_{i=1}^m|{\rm det} A_i^{-1}(y)|_p^{\frac{\zeta}{q_i}}\|A_i(y)\|_p^{\frac{\zeta n}{q_i}}\Big(\dfrac{\omega(B_{\gamma+k_{A_i}})}{\omega(B_{\gamma})}\Big)^{\lambda_i}dy\Big)\prod\limits_{i=1}^m\|f_i\|_{{\mathop{B}\limits^.}^{q_i,\lambda_i}_{\omega}(\mathbb Q_p^n)}.\nonumber
\end{align}
In addition, for $i= 1, ..., m$, by making Proposition \ref{pro2.3DFan} again and $\lambda_i<0$, we infer
\begin{align}\label{theo3.4-omega}
\Big(\frac{\omega(B_{\gamma+k_{A_i}})}{\omega(B_{\gamma})}\Big)^{\lambda_i}\lesssim \left\{ \begin{array}{l}
\Big(\dfrac{|B_{\gamma+k_{A_i}}|}{|B_{\gamma}|}\Big)^{\zeta\lambda_i}\lesssim p^{(\gamma+k_{A_i}-\gamma)n\zeta\lambda_i}=\|A_i(y)\|_p^{n\zeta\lambda_i},\\
\textit{\rm if}\, \|A_i(y)\|_p\leq 1,
\\
\\
\Big(\dfrac{|B_{\gamma+k_{A_i}}|}{|B_{\gamma}|}\Big)^{\frac{\lambda_i(\delta-1)}{\delta}}\lesssim p^{\frac{(\gamma+k_{A_i}-\gamma)n\lambda_i(\delta-1)}{\delta}}=\|A_i(y)\|_p^{\frac{n\lambda_i(\delta-1)}{\delta}},
\\
\textit{\rm otherwise}.
\end{array} \right.
\end{align}
Thus, we have
$
\|\mathcal H^p_{\Phi,\vec A}(\vec f)\|_{{\mathop{B}\limits^.}^{q^*,\lambda}_{\omega}(\mathbb Q_p^n)}\lesssim \mathcal C_4 \prod\limits_{i=1}^m \|f_i\|_{{\mathop{B}\limits^.}^{q_i,\lambda_i}_{\omega}(\mathbb Q_p^n)},
$
which gives that the proof of this theorem is ended.
\end{proof}
\section{The main results about the boundness of ${\mathcal{H}}^p_{\Phi,\vec A,\vec b}$}\label{section4}
Before stating our next results, we introduce some notations which will be used throughout this section. Let $q, q_i\in [1,\infty)$, and let $\alpha$, $\alpha_i, r_i$ be real numbers such that $r_i\in (1,\infty)$, $\alpha_i \in (-n,\frac{nr_i}{r_i'})$, $i=1,2,...,m$. Denote
$$  \left(\frac{1}{{{q_1}}} + \frac{1}{{{q_2}}} + \cdots + \frac{1}{{{q_m}}}\right) + \left(\frac{1}{{{r_1}}} + \frac{1}{{{r_2}}} + \cdots + \frac{1}{{{r_m}}}\right) = \frac{1}{q}, $$
  $$  \left( \frac{{\alpha_1}}{{{q_{1}}}} + \frac{\alpha _2}{{{q_{2}}}} + \cdots + \frac{\alpha_m}{{{q_{m}}}} \right)+\left( \frac{{\alpha_1}}{{{r_{1}}}} + \frac{\alpha _2}{{{r_{2}}}} + \cdots + \frac{\alpha_m}{{{r_{m}}}}\right)= \frac{\alpha}{q}.
$$

\begin{lemma}\label{LemmaCMO1Matrix}
Let $\omega(x)=|x|_p^{\alpha}$, $\omega_i(x)=|x|_p^{\alpha_i}$ and $b_i\in {{CMO}}_{\omega_i}^{r_i}(\mathbb Q^n_p)$, for all $i=1, ...,m.$
Then, for any $\gamma\in\mathbb Z$, we have
\begin{align}
&\|{\mathcal{H}}^p_{\Phi,\vec A,\vec b}(\vec f)\|_{L^q_\omega(B_\gamma)}\lesssim p^{\sum\limits_{i=1}^m \frac{\gamma(n+\alpha_i)}{r_i}}\mathcal B_{\vec r,\vec\omega}.\int_{\mathbb Q^n_p}\dfrac{\Phi(y)}{|y|^n_p}\prod\limits_{i=1}^m\psi_i(y).\mu_i(y).\|f_i\|_{L^{q_i}_{\omega_i}(B_{\gamma+k_{A_i}})}dy,\nonumber
\end{align}
where 
\begin{align}
\psi_i(y)&= 1+\big(\textit{\rm max}\,\lbrace \|A_i^{-1}(y)\|_p^{\alpha_i},\|A_i(y)\|_p^{-\alpha_i}\rbrace |\textit{\rm{det}}A_i^{-1}(y)|_p\big)^{\frac{1}{r_i}}\|A_i(y)\|_p^{\frac{(n+\alpha_i)}{r_i}}\nonumber
\\
&\,\,\,\,\,\,\,\,\,\,\,+ |{\rm log}_p\|A_i(y)\|_p| + 2\dfrac{\|A_i(y)\|_p^n}{|{\rm det} A_i(y)|_p},\nonumber
\\
\mu_i(y)&=\Big(\textit{\rm max}\,\lbrace \|A_i^{-1}(y)\|_p^{\alpha_i},\|A_i(y)\|_p^{-\alpha_i}\rbrace |\textit{\rm{det}}A_i^{-1}(y)|_p\Big)^{\frac{1}{q_i}}\,\textit{\rm and}\,\mathcal B_{\vec r,\vec\omega}=\prod\limits_{i=1}^m\|b_i\|_{{{CMO}}^{r_i}_{\omega_i}(\mathbb Q^n_p)}.\nonumber
\end{align}
\end{lemma}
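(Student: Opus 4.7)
The plan is to follow the standard approach for commutator estimates on product spaces: pull the outer $y$-integration outside via Minkowski's inequality, decouple the factors $f_i(A_i(y)\cdot)$ and $b_i(\cdot)-b_i(A_i(y)\cdot)$ through a generalized H\"older inequality whose weights factor according to the hypothesis on the exponents, and finally unfold the commutator difference through a chain of intermediate averages of $b_i$. The key observation is that all weights are power weights, so $\omega(x)=|x|_p^\alpha$ factors as $\prod_i \omega_i^{q/q_i}\cdot\prod_i \omega_i^{q/r_i}$ thanks to the relation $\tfrac{\alpha}{q}=\sum_i \tfrac{\alpha_i}{q_i}+\sum_i \tfrac{\alpha_i}{r_i}$; therefore H\"older with exponents $q_i/q$ and $r_i/q$ (which sum to $1$) yields
\[
\Big\|\prod_{i=1}^m\bigl(b_i(\cdot)-b_i(A_i(y)\cdot)\bigr)\prod_{i=1}^m f_i(A_i(y)\cdot)\Big\|_{L^q_\omega(B_\gamma)}\le \prod_{i=1}^m\|b_i(\cdot)-b_i(A_i(y)\cdot)\|_{L^{r_i}_{\omega_i}(B_\gamma)}\prod_{i=1}^m\|f_i(A_i(y)\cdot)\|_{L^{q_i}_{\omega_i}(B_\gamma)}.
\]

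For the $f_i$ factors, the change of variables $z=A_i(y)x$, together with the inclusion $A_i(y)B_\gamma\subset B_{\gamma+k_{A_i}}$ and the elementary bound $|A_i^{-1}(y)z|_p^{\alpha_i}\le \max\{\|A_i^{-1}(y)\|_p^{\alpha_i},\|A_i(y)\|_p^{-\alpha_i}\}\,|z|_p^{\alpha_i}$ (the point at which (\ref{DK2})--(\ref{DK3}) enter), produce precisely $\|f_i(A_i(y)\cdot)\|_{L^{q_i}_{\omega_i}(B_\gamma)}\lesssim \mu_i(y)\,\|f_i\|_{L^{q_i}_{\omega_i}(B_{\gamma+k_{A_i}})}$. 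The main work lies in the commutator factor, for which I would use the four-term decomposition
\begin{align*}
b_i(x)-b_i(A_i(y)x)&=\bigl[b_i(x)-(b_i)_{B_\gamma}\bigr]+\bigl[(b_i)_{B_\gamma}-(b_i)_{B_{\gamma+k_{A_i}}}\bigr]\\
&\quad+\bigl[(b_i)_{B_{\gamma+k_{A_i}}}-(b_i)_{A_i(y)B_\gamma}\bigr]+\bigl[(b_i)_{A_i(y)B_\gamma}-b_i(A_i(y)x)\bigr],
\end{align*}
engineered so that each bracket contributes exactly one of the four summands of $\psi_i(y)$ times the common scale $p^{\gamma(n+\alpha_i)/r_i}\|b_i\|_{CMO^{r_i}_{\omega_i}}$. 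The first bracket is bounded directly by the $CMO$-definition on $B_\gamma$, producing the constant $1$. The second bracket, a scalar, is handled by telescoping through the nested balls $B_{\gamma+j}$ with $0\le j\le |k_{A_i}|$; at each step, H\"older against the dual weight $|x|_p^{-\alpha_i r_i'/r_i}$ (integrable on balls thanks to $\alpha_i<nr_i/r_i'$) bounds the increment by $\|b_i\|_{CMO^{r_i}_{\omega_i}}$, producing the factor $|\log_p\|A_i(y)\|_p|$. The third bracket compares two averages on sets of different size; estimating $|(b_i)_{B_{\gamma+k_{A_i}}}-(b_i)_{A_i(y)B_\gamma}|\le |A_i(y)B_\gamma|^{-1}\int_{B_{\gamma+k_{A_i}}}|b_i-(b_i)_{B_{\gamma+k_{A_i}}}|\,dz$ introduces the eccentricity factor $\|A_i(y)\|_p^n/|\det A_i(y)|_p=|B_{\gamma+k_{A_i}}|/|A_i(y)B_\gamma|$. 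The fourth bracket, treated via the change of variables $z=A_i(y)x$ and then triangulated through $(b_i)_{B_{\gamma+k_{A_i}}}$, contributes one further copy of the eccentricity factor along with $(\max\{\|A_i^{-1}(y)\|_p^{\alpha_i},\|A_i(y)\|_p^{-\alpha_i}\}|\det A_i^{-1}(y)|_p)^{1/r_i}\|A_i(y)\|_p^{(n+\alpha_i)/r_i}$; this is what accounts for the coefficient $2$ in the definition of $\psi_i$.

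I expect the main obstacle to be the bookkeeping in the commutator factor: the two intermediate centers (the concentric ball $B_{\gamma+k_{A_i}}$ and the image set $A_i(y)B_\gamma$) have to be threaded so that each of the four summands of $\psi_i$ emerges from the correct bracket, while the common scaling $\omega_i(B_\gamma)^{1/r_i}\simeq p^{\gamma(n+\alpha_i)/r_i}$ is pulled out uniformly. A subsidiary difficulty is verifying that the telescoping bound in the second bracket is controlled by $\|b_i\|_{CMO^{r_i}_{\omega_i}}$ independently of the intermediate scale $j$, which is precisely the point at which the admissibility condition $\alpha_i\in(-n,nr_i/r_i')$ becomes essential.
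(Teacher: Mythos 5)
Your proposal is correct and follows essentially the same route as the paper: Minkowski plus generalized H\"older using the factorization of the power weight, the change-of-variables bound giving $\mu_i(y)$, and the decomposition of $b_i(x)-b_i(A_i(y)x)$ through the intermediate averages over $B_\gamma$, $B_{\gamma+k_{A_i}}$ and $A_i(y)B_\gamma$, with the telescoping step controlled by the integrability of the dual weight under $\alpha_i<nr_i/r_i'$. The only cosmetic difference is that you write the four-term splitting upfront, whereas the paper first splits into three terms $I_{1,i},I_{2,i},I_{3,i}$ and then subdivides $I_{2,i}$ and $I_{3,i}$ through $b_{i,B_{\gamma+k_{A_i}}}$; the resulting accounting of the four summands of $\psi_i(y)$, including the factor $2$ on the eccentricity term, is identical.
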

\begin{proof}
By the Minkowski inequality and the H\"{o}lder inequality, for any $\gamma\in\mathbb Z$, we get
\begin{align}\label{HfMatrix}
\big\|\mathcal{H}_{\Phi,\vec A,\vec b}^{p}(\vec f)\big\|_{L^{q}_{\omega}(B_\gamma)}\lesssim \int\limits_{\mathbb Q^n_p}{\frac{\Phi(y)}{|y|^n_p}}\prod\limits_{i=1}^m\big\|b_i(\cdot)-b_i(A_i(y)\cdot)\big\|_{L^{r_i}_{\omega_i}(B_\gamma)}\big\|f_i(A_i(y)\cdot)\big\|_{L^{q_i}_{\omega_i}(B_\gamma)}dy.
\end{align}
To prove this lemma, we need to show that the following inequality holds 
\begin{align}\label{BDT}
&\big\|b_i(\cdot)-b_i(A_i(y)\cdot)\big\|_{L^{r_i}_{\omega_i}(B_\gamma)}\lesssim p^{\frac{\gamma(\alpha_i+n)}{r_i}}.\psi_i(y).\big\|b_i\big\|_{{ {CMO}}^{r_i}_{\omega_i}(\mathbb Q_p^n)},\,\textit{\rm for all }\,i=1,...,m.
\end{align}
We put $I_{1,i}= \|b_i(\cdot)-b_{i,B_\gamma}\|_{L^{r_i}_{\omega_i}(B_\gamma)},$ $ I_{2,i}=\|b_i(A_i(y)\cdot)-b_{i, A_i(y)B_\gamma}\|_{L^{r_i}_{\omega_i}(B_\gamma)}$ and $ I_{3, i}=\|b_{i,B_\gamma}-b_{i,A_i(y)B_\gamma}\|_{L^{r_i}_{\omega_i}(B_\gamma)}.$ It is obvious that
\begin{equation}\label{BDTphantich}
\big\|b_i(\cdot)-b_i(A_i(y)\cdot)\big\|_{L^{r_i}_{\omega_i}{(B_\gamma)}}\leq I_{1,i} + I_{2,i} + I_{3,i},\,\textit{\rm for all }\,i=1, ...,m.
\end{equation}
By the definition of the space ${{CMO}}^{r_i}_{\omega_i}(\mathbb Q^n_p)$ and the estimation (\ref{espower}), we have
\begin{equation}\label{I1i}
I_{1,i}\leq  {\omega}_i(B_\gamma)^{\frac{1}{r_i}}.\big\|b_i\big\|_{{{CMO}}^{r_i}_{\omega_i}(\mathbb Q^n_p)}\lesssim p^{\frac{\gamma(\alpha_i+n)}{r_i}}.\big\|b_i\big\|_{{{CMO}}^{r_i}_{\omega_i}(\mathbb Q^n_p)}.
\end{equation}
To estimate $I_{2,i}$, we deduce that
\begin{align}\label{decomaI2i}
I_{2,i}&=\Big(\int\limits_{B_\gamma}{|b_i(A_i(y)x)-b_{i,A_i(y)B_\gamma}|^{r_i}\omega_i(x)dx}\Big)^{\frac{1}{r_i}}
\nonumber
\\
&\leq  \omega_i(B_\gamma)^{\frac{1}{r_i}}|b_{i,A_i(y)B_\gamma}-b_{i,B_{\gamma+k_{A_i}}}|+\Big(\int\limits_{B_\gamma}{|b_i(A_i(y)x)-b_{i,B_{\gamma+k_{A_i}}}|^{r_i}\omega_i(x)dx}\Big)^{\frac{1}{r_i}},
\end{align}
where $k_{A_i}(y)= {\rm log_p}\|A_i(y)\|_p$.
Note that, by the formula for change of variables, we get
\begin{align}\label{|AiB|}
|A_i(y)B_\gamma|&=\int_{A_i(y)B_\gamma}dx=\int_{B_\gamma}|\textit{\rm{det}}A_i(y)|_pdz\simeq |{\rm det} A_i(y)|_p .p^{\gamma n}.
\end{align}
Thus, by using the H\"{o}lder inequality and (\ref{espower}), it is clear to see that
\begin{align}\label{biAiB-CMO}
&|b_{i,A_i(y)B_\gamma}-b_{i,B_{\gamma+k_{A_i}}}|\leq \dfrac{1}{|A_i(y)B_\gamma|}\int_{A_i(y)B_\gamma}|b_i(x)-b_{i,B_{\gamma+k_{A_i}}}|dx\nonumber
\\
&\leq \dfrac{1}{|A_i(y)B_\gamma|}\Big(\int_{B_{\gamma+k_{A_i}}}|b_i(x)-b_{i,B_{\gamma+k_{A_i}}}|^{r_i}\omega_i(x)dx\Big)^{\frac{1}{r_i}}\Big(\int_{B_{\gamma+k_{A_i}}}\omega_i^{\frac{-r_i'}{r_i}}dx\Big)^{\frac{1}{r_i'}}\nonumber
\\
&\leq \dfrac{\omega_i(B_{\gamma+k_{A_i}})^{\frac{1}{r_i}}.\Big(\int_{B_{\gamma+k_{A_i}}}|x|_p^{\frac{-\alpha_i r_i'}{r_i}}dx\Big)^{\frac{1}{r_i'}}}{|A_i(y)B_\gamma|}\|b_i\|_{{{{CMO}}^{r_i}_{\omega_i}(\mathbb Q^n_p)}}\nonumber
\\
&\lesssim \dfrac{p^{\frac{(\gamma +k_{A_i})(n+\alpha_i)}{r_i}}.p^{(\gamma+k_{A_i})(\frac{-\alpha_i}{r_i}+\frac{n}{r_i'})}}{|{\rm det}A_i(y)|_p.p^{\gamma n}}\|b_i\|_{{{{CMO}}^{r_i}_{\omega_i}(\mathbb Q^n_p)}}= \dfrac{\|A_i(y)\|_p^n}{|{\rm det}A_i(y)|_p}\|b_i\|_{{{{CMO}}^{r_i}_{\omega_i}(\mathbb Q^n_p)}}.
\end{align}
By using the formula for change of variables again, one has
\begin{align}\label{biAi(y)x}
&\Big(\int\limits_{B_\gamma}{|b_i(A_i(y)x)-b_{i,B_{\gamma+k_{A_i}}}|^{r_i}\omega_i(x)dx}\Big)^{\frac{1}{r_i}}\nonumber
\\
&=\Big(\int\limits_{A_i(y)B_\gamma}{|b_i(z)-b_{i,B_{\gamma+k_{A_i}}}|^{r_i}|A_i^{-1}(y)z|_p^{\alpha_i}|\textit{\rm det}A_i^{-1}(y)|_pdz}\Big)^{\frac{1}{r_i}}\nonumber
\\
&\leq \Big(\textit{\rm max}\,\lbrace \|A_i^{-1}(y)\|_p^{\alpha_i},\|A_i(y)\|_p^{-\alpha_i}\rbrace |\textit{\rm{det}}A_i^{-1}(y)|_p\,\omega_i(B_{\gamma+k_{A_i}})\Big)^{\frac{1}{r_i}}\times\nonumber
\\
&\,\,\,\,\,\,\,\,\,\,\,\,\,\,\times\Big(\dfrac{1}{\omega_i(B_{\gamma+k_{A_i}})}\int\limits_{B_{\gamma+k_{A_i}}}{|b_i(z)-b_{i, B_{\gamma+k_{A_i}}}|^{r_i}\omega_i(z)dz}\Big)^{\frac{1}{r_i}}.
\end{align}
This leads to
\begin{align}
&\Big(\int\limits_{B_\gamma}{|b_i(A_i(y)x)-b_{i,B_{\gamma +k_{A_i}}}|^{r_i}\omega_i(x)dx}\Big)^{\frac{1}{r_i}}\nonumber
\\
&\lesssim p^{\frac{\gamma(n+\alpha_i)}{r_i}}.\big(\textit{\rm max}\,\lbrace \|A_i^{-1}(y)\|_p^{\alpha_i},\|A_i(y)\|_p^{-\alpha_i}\rbrace |\textit{\rm{det}}A_i^{-1}(y)|_p\big)^{\frac{1}{r_i}}\|A_i(y)\|_p^{\frac{(n+\alpha_i)}{r_i}}.\big\|b_i\big\|_{{{CMO}}^{r_i}_{\omega_i}(\mathbb Q^n_p)}.\nonumber
\end{align}
Therefore, by (\ref{decomaI2i}) and (\ref{biAiB-CMO}), we have
\begin{align}\label{I2i}
I_{2,i}&\lesssim \Big(\dfrac{\|A_i(y)\|_p^n}{|{\rm det}A_i(y)|_p}+\big(\textit{\rm max}\,\lbrace \|A_i^{-1}(y)\|_p^{\alpha_i},\|A_i(y)\|_p^{-\alpha_i}\rbrace |\textit{\rm{det}}A_i^{-1}(y)|_p\big)^{\frac{1}{r_i}}\|A_i(y)\|_p^{\frac{(n+\alpha_i)}{r_i}}\Big)\times\nonumber
\\ 
&\,\,\,\,\,\,\,\,\,\times p^{\frac{\gamma(n+\alpha_i)}{r_i}}.\big\|b_i\big\|_{{{CMO}}^{r_i}_{\omega_i}(\mathbb Q^n_p)}.
\end{align}
Next, we consider the term $I_{3,i}$. We have
\begin{equation}\label{I3i}
I_{3,i}\leq \omega_i(B_\gamma)^{\frac{1}{r_i}}\big|b_{i, B_\gamma}-b_{i,A_i(y)B_\gamma}\big|.
\end{equation}
Fix $y\in\mathbb Q_p^n$. We set
$$
S_{k_{A_i}} = \left\{ \begin{array}{l}
\big\{j\in\mathbb Z: 1\leq j\leq k_{A_i}\big\},\,\,\,\,\,\,\,\,\,\,\,\,\,\textit{\rm if }\,k_{A_i}\geq 1,
\\
\\
\big\{j\in\mathbb Z: k_{A_i} +1\leq j\leq 0\big\},\textit{ \rm otherwise}.
\end{array} \right.
$$
As mentioned above, we have
\begin{align}\label{I3i-compose-power}
\big|b_{i,B_\gamma}-b_{i,A_i(y)B_\gamma}\big|&\leq \sum\limits_{j\in S_{k_{A_i}}}\big|b_{i,B_{\gamma+j-1}}-b_{i,B_{\gamma+j}}\big|+\big|b_{i,B_{\gamma+k_{A_i}}}-b_{i,A_i(y)B_\gamma}\big|.
\end{align}
Combining the H\"{o}lder inequality, the definition of the space $ CMO^{r_i}_{\omega_i}(\mathbb Q_p^n)$ and (\ref{espower}), one has
\begin{align}
&\big|b_{i,B_{\gamma+j-1}}-b_{i,B_{\gamma+j}}\big|\leq \dfrac{1}{|B_{\gamma+j-1}|}\int_{B_{\gamma+j}}|b_i(z)-b_{i,B_{\gamma+j}}|dz\lesssim \dfrac{1}{|B_{\gamma+j}|}\int_{B_{\gamma+j}}|b_i(z)-b_{i,B_{\gamma+j}}|dz\nonumber
\\
&\leq \dfrac{\omega_i(B_{\gamma+j})^{\frac{1}{r_i}}}{|B_{\gamma+j}|}\Big(\int_{B_{\gamma+j}}\omega_i^{\frac{-r_i'}{r_i}}dx\Big)^{\frac{1}{r_i'}}\Big(\dfrac{1}{\omega_i(B_{\gamma+j})}\int_{B_{\gamma+j}}|b_i(z)-b_{i,B_{\gamma+j}}|^{r_i}\omega_i(x)dz\Big)^{\frac{1}{r_i}}\nonumber
\\
&\lesssim\dfrac{p^{(\gamma+j)\frac{(\alpha_i+n)}{r_i}}}{p^{(\gamma+j)n}}p^{(\gamma+j)(\frac{-\alpha_i}{r_i}+\frac{n}{r_i'})}\|b_i\|_{CMO^{r_i}_{\omega_i}(\mathbb Q^n_p)}=\|b_i\|_{CMO^{r_i}_{\omega_i}(\mathbb Q^n_p)}.\nonumber
\end{align}
Thus,
\begin{align}\label{biB-biAiB}
\big|b_{i,B_\gamma}-b_{i,A_i(y)B_\gamma}\big|\lesssim |k_{A_i}|.\|b_i\|_{CMO^{r_i}_{\omega_i}(\mathbb Q^n_p)} +\big|b_{i,B_{\gamma+k_{A_i}}}-b_{i,A_i(y)B_\gamma}\big|.
\end{align}
In addition, by the H\"{o}lder inequality again and (\ref{|AiB|}), we get
\begin{align}
&\big|b_{i,B_{\gamma+k_{A_i}}}-b_{i,A_i(y)B_\gamma}\big|
\leq  \frac{1}{|A_i(y)B_\gamma|}\int\limits_{A_i(y)B_\gamma}{\big|b_i(x)-b_{i,B_{\gamma+k_{A_i}}}\big|dx}\nonumber
\\
&\leq\dfrac{\omega_i(B_{\gamma+k_{A_i}})^{\frac{1}{r_i}}}{|A_i(y)B_\gamma|}\Big(\int_{B_{\gamma+k_{A_i}}}\omega_i^{-\frac{r_i'}{r_i}}dx\Big)^{\frac{1}{r_i'}}\Big(\dfrac{1}{\omega_i(B_{\gamma+k_{A_i}})}\int\limits_{B_{\gamma+k_{A_i}}}{\big|b_i(x)-b_{i,B_{\gamma+k_{A_i}}}\big|^{r_i}\omega_i(x)dx}\Big)^{\frac{1}{r_i}}\nonumber
\\
&\lesssim \dfrac{p^{(\gamma+k_{A_i})\frac{(\alpha_i+n)}{r_i}}}{|{\rm det} A_i(y)|_p. p^{\gamma n}}.p^{(\gamma+ k_{A_i})(\frac{-\alpha_i}{r_i}+\frac{n}{r_i'})}\|b_i\big\|_{{{CMO}}^{r_i}_{\omega_i}(\mathbb Q^n_p)}=\dfrac{\|A_i(y)\|_p^n}{|{\rm det} A_i(y)|_p}\|b_i\big\|_{{{CMO}}^{r_i}_{\omega_i}(\mathbb Q^n_p)}.\nonumber
\end{align}
Consequently, by (\ref{I3i})-(\ref{biB-biAiB}), it follows that
\begin{align}
I_{3,i}&\lesssim p^{\frac{\gamma(n+\alpha_i)}{r_i}}\Big(|{\rm log}_p\|A_i(y)\|_p| + \dfrac{\|A_i(y)\|_p^n}{|{\rm det} A_i(y)|_p}\Big).\big\|b_i\big\|_{{{CMO}}^{r_i}_{\omega_i}(\mathbb Q^n_p)}.\nonumber
\end{align}
This together with (\ref{BDTphantich}), (\ref{I1i}) and (\ref{I2i}) follow us to have the proof of the inequality (\ref{BDT}). Finally, by estimating as (\ref{biAi(y)x}), we immediately have 
\begin{align}
\|f_i(A_i(y)\cdot)\|_{L^{q_i}_{\omega_i}(B_\gamma)}&\leq \Big(\textit{\rm max}\,\lbrace \|A_i^{-1}(y)\|_p^{\alpha_i},\|A_i(y)\|_p^{-\alpha_i}\rbrace |\textit{\rm{det}}A_i^{-1}(y)|_p\Big)^{\frac{1}{q_i}}\|f_i\|_{L^{q_i}_{\omega_i}(B_{\gamma+k_{A_i}})}\nonumber
\\
&=\mu_i(y).\|f_i\|_{L^{q_i}_\omega(B_{\gamma+k_{A_i}})}.\nonumber
\end{align}
In view of (\ref{HfMatrix}) and (\ref{BDT}), the proof of this lemma is ended.
\end{proof}
\begin{lemma}\label{Lemma2}
Let $1\leq q^*, r_1^*, ..., r_m^*, q_1^*, ..., q_m^*, \zeta<\infty$, $\omega\in A_{\zeta}$ with the finite critical index $r_\omega$ for the reverse H\"{o}lder condition, $\delta\in (1,r_\omega)$, $\lambda_i\in \big(\frac{-1}{q^*_{i}},0\big)$, $\zeta\leq r_i^*$ and $b_i\in {{CMO}}^{r_i^*}_{\omega}(\mathbb Q^n_p)$ for all $i=1,...,m$. Assume that the following condition holds:
\begin{align}\label{r*q*}
 \dfrac{1}{q^*}>\Big(\dfrac{1}{r_1^*}+\cdots+ \dfrac{1}{r_m^*}+ \dfrac{1}{q_1^*}+\cdots+ \dfrac{1}{q_m^*}\Big)\zeta\dfrac{r_\omega}{r_\omega-1}.
\end{align}
Then, we have
\begin{align}
\|{\mathcal H}^{p}_{\Phi,\vec A,\vec b}(\vec f)\|_{L^{q^*}_{\omega}(B_\gamma)} &\lesssim \omega(B_\gamma)^{\frac{1}{q^*}}.\mathcal B_{\vec r^*, \omega}.\Big(\int_{\mathbb Q^n_p}\dfrac{\Phi(y)}{|y|_p^n}\prod\limits_{i=1}^m\psi_i^*(y).\mu_i^*(y)\times
\nonumber
\\
&\,\,\,\,\,\,\,\,\,\,\,\,\,\,\,\times\dfrac{1}{\omega(B_{\gamma+k_{A_i}})^{\frac{1}{q_i^*}}}\|f_i\|_{L^{q_i^*}_\omega(B_{\gamma+k_{A_i}})}dy\Big), \textit{ for all }\,\gamma\in\mathbb Z.\nonumber
\end{align}
Here 
\begin{align}
\psi^*_i(y)&=1+\frac{2\|A_i(y)\|_p^{n}}{|{\rm det}A_i(y)|_p}+|{\rm det} A_i^{-1}(y)|^{\frac{\zeta}{r_i^*}}\|A_i(y)\|_p^{\frac{n\zeta}{r_i^*}}
+ |{\rm log}_p\|A_i(y)\|_p|,
\nonumber
\\
\mu^*_i(y)&=|{\rm det} A_i^{-1}(y)|_p^{\frac{\zeta}{q_i^*}}\|A_i(y)\|_p^{\frac{n\zeta}{q_i^*}}\,\textit{\rm and}\,\mathcal B_{\vec r^*,\omega}=\prod\limits_{i=1}^m\big\|b_i\big\|_{{{CMO}}^{r_i^*}_{\omega}(\mathbb Q^n_p)}.\nonumber
\end{align}
\end{lemma}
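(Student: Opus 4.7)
The plan is to combine the three-piece decomposition of $b_i-b_i(A_i(y)\,\cdot\,)$ developed for Lemma~\ref{LemmaCMO1Matrix} with the H\"older plus reverse-H\"older splitting of the Muckenhoupt weight used in Theorem~\ref{theo-HfLq*}. After applying Minkowski's integral inequality to pull the outer $L^{q^*}_\omega$ norm inside the $y$-integral, the task reduces to bounding, for fixed $y\in\mathbb Q^n_p$, the quantity $\|\prod_{i=1}^m u_i v_i\|_{L^{q^*}_\omega(B_\gamma)}$ where $u_i(x):=|b_i(x)-b_i(A_i(y)x)|$ and $v_i(x):=|f_i(A_i(y)x)|$.

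Set $\tfrac{1}{q}:=\sum_i \tfrac{1}{r_i^*}+\sum_i \tfrac{1}{q_i^*}$, so hypothesis~\eqref{r*q*} reads $q>q^*\zeta r_\omega/(r_\omega-1)$ and there exists $r\in (1,r_\omega)$ with $q=q^*\zeta r'$. First apply H\"older with exponents $r'$ and $r$ to separate the weight, then invoke the reverse H\"older condition $\omega\in RH_r$ to obtain
\[
\Big\|\prod_i u_i v_i\Big\|_{L^{q^*}_\omega(B_\gamma)}\lesssim \omega(B_\gamma)^{1/q^*}|B_\gamma|^{-1/(r'q^*)}\Big(\int_{B_\gamma}\prod_i u_i^{q/\zeta}v_i^{q/\zeta}\,dx\Big)^{\zeta/q}.
\]
Next, on the remaining unweighted integral, apply H\"older with exponents $r_i^*/q$ (for each $u_i^{q/\zeta}$) and $q_i^*/q$ (for each $v_i^{q/\zeta}$); these close exactly without slack because $\sum 1/r_i^*+\sum 1/q_i^*=1/q$. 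This yields a product of factors $(\int_{B_\gamma}u_i^{r_i^*/\zeta})^{\zeta/r_i^*}$ and $(\int_{B_\gamma}v_i^{q_i^*/\zeta})^{\zeta/q_i^*}$.

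For each $v_i$-factor, a change of variables $z=A_i(y)x$ yields $|\det A_i^{-1}(y)|_p^{\zeta/q_i^*}$; then the inclusion $A_i(y)B_\gamma\subseteq B_{\gamma+k_{A_i}}$ together with Proposition~\ref{pro2.4DFan} (applied with $\ell=\zeta$ to $|g|=|f_i|^{q_i^*/\zeta}$) converts the unweighted integral into the weighted norm $\|f_i\|_{L^{q_i^*}_\omega(B_{\gamma+k_{A_i}})}$, producing the remaining $\|A_i(y)\|_p^{n\zeta/q_i^*}$ (from $|B_{\gamma+k_{A_i}}|/|B_\gamma|$) and $\omega(B_{\gamma+k_{A_i}})^{-1/q_i^*}$, and these assemble to $\mu_i^*(y)$ times the claimed weighted norm of $f_i$. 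For each $u_i$-factor, apply Minkowski on $L^{r_i^*/\zeta}(B_\gamma)$ (valid because $\zeta\leq r_i^*$) to split into three pieces via
\[
b_i(x)-b_i(A_i(y)x)=\big[b_i(x)-b_{i,B_\gamma}\big]+\big[b_{i,B_\gamma}-b_{i,A_i(y)B_\gamma}\big]+\big[b_{i,A_i(y)B_\gamma}-b_i(A_i(y)x)\big],
\]
and bound each piece along the lines of $I_{1,i}, I_{2,i}, I_{3,i}$ in Lemma~\ref{LemmaCMO1Matrix}, but using Proposition~\ref{pro2.4DFan} (with $\ell=\zeta$ to pass unweighted $L^{r_i^*/\zeta}$ to weighted $L^{r_i^*}_\omega$, and with $\ell=r_i^*$ to bound the consecutive-ball differences $|b_{i,B_{\gamma+j-1}}-b_{i,B_{\gamma+j}}|$) in place of the explicit power-weight calculations. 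This yields exactly $|B_\gamma|^{\zeta/r_i^*}\|b_i\|_{CMO^{r_i^*}_\omega}\,\psi_i^*(y)$, with the $|\log_p\|A_i(y)\|_p|$ term coming from telescoping over a chain of $|k_{A_i}|$ consecutive balls and the $\|A_i(y)\|_p^n/|\det A_i(y)|_p$ terms from the ratio $|B_{\gamma+k_{A_i}}|/|A_i(y)B_\gamma|$.

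Collecting everything, the total $|B_\gamma|$-exponent is $-\tfrac{1}{r'q^*}+\zeta\big(\sum 1/r_i^*+\sum 1/q_i^*\big)=-\tfrac{\zeta}{q}+\tfrac{\zeta}{q}=0$, so all $|B_\gamma|$-factors cancel and the asserted bound follows. The main obstacle is the third piece of the $u_i$-decomposition: unlike the power-weight case of Lemma~\ref{LemmaCMO1Matrix}, the weight $\omega(A_i^{-1}(y)\,\cdot\,)$ has no explicit form, so after the change of variables one must enlarge the domain to $B_{\gamma+k_{A_i}}$ and then use Proposition~\ref{pro2.4DFan} together with the inclusion $\omega\in A_\zeta\subseteq A_{r_i^*}$ (which is precisely where the assumption $\zeta\leq r_i^*$ is used) to control $|b_i-b_{i,B_{\gamma+k_{A_i}}}|$ via the $CMO^{r_i^*}_\omega$-norm. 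Careful bookkeeping of the $|\det A_i(y)|_p^{\zeta/r_i^*-1}$ factors arising in the estimate of $|b_{i,B_{\gamma+k_{A_i}}}-b_{i,A_i(y)B_\gamma}|$ is needed so that they combine with $|\det A_i^{-1}(y)|_p^{\zeta/r_i^*}$ into the clean $\|A_i(y)\|_p^n/|\det A_i(y)|_p$ contribution appearing in $\psi_i^*$.
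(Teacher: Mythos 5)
Your proposal is correct, and it reaches the same bound with the same essential ingredients (Minkowski in $y$, the three-piece decomposition of $b_i-b_i(A_i(y)\cdot)$, Proposition \ref{pro2.4DFan}, and the reverse H\"older condition), but the H\"older bookkeeping is organized differently from the paper's. The paper consumes the strict inequality \eqref{r*q*} by first choosing auxiliary exponents $r_1,\dots,r_m,q_1,\dots,q_m$ with $\frac1{r_i}>\frac{\zeta}{r_i^*}\frac{r_\omega}{r_\omega-1}$, $\frac1{q_i}>\frac{\zeta}{q_i^*}\frac{r_\omega}{r_\omega-1}$ and $\sum\frac1{r_i}+\sum\frac1{q_i}=\frac1{q^*}$, performing a \emph{weighted} H\"older on $L^{q^*}_\omega(B_\gamma)$ to split into factors $\|b_i(\cdot)-b_i(A_i(y)\cdot)\|_{L^{r_i}_\omega(B_\gamma)}$ and $\|f_i(A_i(y)\cdot)\|_{L^{q_i}_\omega(B_\gamma)}$, and then stripping the weight factor-by-factor via $2m$ separate applications of H\"older plus reverse H\"older (writing $r_i^*/\zeta=r_i\beta_i'$ with $\beta_i\in(1,r_\omega)$, and similarly for the $f_i$-factors); the $\omega(B_\gamma)^{1/q^*}$ prefactor then assembles as $\prod_i\omega(B_\gamma)^{1/r_i+1/q_i}$. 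You instead spend the strictness in a single global step — one H\"older with exponents $r,r'$ where $q=q^*\zeta r'$, $r\in(1,r_\omega)$, followed by one reverse-H\"older application — after which the inner unweighted H\"older with exponents $r_i^*/q$, $q_i^*/q$ closes exactly with no slack, and each unweighted factor is returned to a weighted norm by Proposition \ref{pro2.4DFan} with $\ell=\zeta$. Your version mirrors the weight-handling of Theorem \ref{theo-HfLq*} more closely and avoids introducing the intermediate exponents $r_i,q_i,\beta_i$ altogether, at the cost of tracking the $|B_\gamma|$-powers explicitly (which you do, and they cancel as you claim: $-\zeta/q+\zeta(\sum 1/r_i^*+\sum 1/q_i^*)=0$); the paper's version keeps every intermediate quantity a genuine weighted norm, so no $|B_\gamma|$-factors ever appear. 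Your identification of where $\zeta\le r_i^*$ enters and of the origin of each term of $\psi_i^*$ (the telescoping chain for $|\log_p\|A_i(y)\|_p|$, the ratio $|B_{\gamma+k_{A_i}}|/|A_i(y)B_\gamma|$ for $\|A_i(y)\|_p^n/|\det A_i(y)|_p$) matches the paper's computations in Lemma \ref{LemmaCMO1Matrix} and Lemma \ref{Lemma2}.
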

\begin{proof}
By virtue of the inequality (\ref{r*q*}), there exist $r_1, ..., r_m, q_1, ..., q_m$ such that
\begin{align}
&\dfrac{1}{q_i}>\dfrac{\zeta}{q_i^*}\dfrac{r_\omega}{r_{\omega}-1}\,\textit{\rm and }\, \dfrac{1}{r_i}>\dfrac{\zeta}{r_i^*}\dfrac{r_\omega}{r_{\omega}-1}\,\textit{\rm for all }\, i=1, ...,m,\nonumber
\\
&\,\textit{\rm and}\,\,
\dfrac{1}{q_1}+\cdots+\dfrac{1}{q_m}+\dfrac{1}{r_1}+ \cdots+\dfrac{1}{r_m}=\dfrac{1}{q^*}.\nonumber
\end{align}
As mentioned above, for any $\gamma\in\mathbb Z$, by the same argument (\ref{HfMatrix}), we also get
\begin{align}\label{HfMatrix-Ap}
\big\|\mathcal{H}_{\Phi,\vec A,\vec b}^{p}(\vec f)\big\|_{L^{q^*}_{\omega}(B_\gamma)}\lesssim \int\limits_{\mathbb Q^n_p}{\frac{\Phi(y)}{|y|^n_p}}\prod\limits_{i=1}^m\big\|b_i(\cdot)-b_i(A_i(y)\cdot)\big\|_{L^{r_i}_{\omega}(B_\gamma)}\big\|f_i(A_i(y)\cdot)\big\|_{L^{q_i}_{\omega}(B_\gamma)}dy.
\end{align}
In particular, we need to show the following result
\begin{align}\label{BDT-Ap}
&\big\|b_i(\cdot)-b_i(A_i(y)\cdot)\big\|_{L^{r_i}_{\omega}(B_\gamma)}\lesssim \omega(B_\gamma)^{\frac{1}{r_i}}.\psi_i^*(y).\big\|b_i\big\|_{{ {CMO}}^{r_i^*}_{\omega}(\mathbb Q_p^n)},
\end{align}
for all $i=1,...,m$. Indeed, we see that
\begin{align}\label{BDTphantich-Ap}
\big\|b_i(\cdot)-b_i(A_i(y)\cdot)\big\|_{L^{r_i}_{\omega}(B_\gamma)}&\leq \|b_i(\cdot)-b_{i,B_\gamma}\|_{L^{r_i}_\omega(B_\gamma)}+ \|b_i(A_i(y)\cdot)-b_{i,A_i(y)B_\gamma}\|_{L^{r_i}_\omega(B_\gamma)}\nonumber
\\
&\,\,\,\,\,\,\,+\|b_{i,B_\gamma}-b_{i,A_i(y)B_\gamma}\|_{L^{r_i}_{\omega}(B_\gamma)}:=J_{1,i}+J_{2,i}+J_{3,i}.
\end{align}
By virtue of  the inequality $r_1<r_1^*$, it is easy to show that
\begin{align}\label{J1iLemma2}
J_{1,i}\leq \omega(B_\gamma)^{\frac{1}{r_i}}\|b_i\|_{{{CMO}}^{r_i^*}_{\omega}(\mathbb Q^n_p)}.
\end{align}
Next, by estimating as (\ref{decomaI2i}) above, we get
\begin{align}\label{J2i-compose-Ap}
J_{2,i}&\leq  \omega(B_\gamma)^{\frac{1}{r_i}}|b_{i,A_i(y)B_\gamma}-b_{i,B_{\gamma+k_{A_i}}}|+\Big(\int\limits_{B_\gamma}{|b_i(A_i(y)x)-b_{i,B_{\gamma+k_{A_i}}}|^{r_i}\omega(x)dx}\Big)^{\frac{1}{r_i}}.
\end{align}
By having the inequality $\zeta\leq r_i^*$ and applying Proposition\ref{pro2.4DFan} and (\ref{|AiB|}), we infer
\begin{align}\label{I2i-biBgammakAi}
&\big|b_{i,B_{\gamma+k_{A_i}}}-b_{i,A_i(y)B_\gamma}\big|
\leq  \frac{1}{|A_i(y)B_\gamma|}\int\limits_{B_{\gamma+k_{A_i}}}{\big|b_i(x)-b_{i,B_{\gamma+k_{A_i}}}\big|dx}\nonumber
\\
&\leq \frac{|B_{\gamma+k_{A_i}}|}{|A_i(y)B_\gamma|}\frac{1}{|B_{\gamma+k_{A_i}}|}\int\limits_{B_{\gamma+k_{A_i}}}{\big|b_i(x)-b_{i,B_{\gamma+k_{A_i}}}\big|dx}\nonumber
\\
&\lesssim \frac{|B_{\gamma+k_{A_i}}|}{|A_i(y)B_\gamma|}\Big(\frac{1}{\omega(B_{\gamma+k_{A_i}})}\int\limits_{B_{\gamma+k_{A_i}}}{\big|b_i(x)-b_{i,B_{\gamma+k_{A_i}}}\big|^{\zeta}\omega(x)dx}\Big)^{\frac{1}{\zeta}}\nonumber
\\
&\leq \frac{p^{(\gamma+k_{A_i})n}}{|{\rm det}A_i(y)|_p.p^{\gamma n}}\|b_i\|_{CMO^{\zeta}_\omega(\mathbb Q^n_p)}\leq \frac{\|A_i(y)\|_p^n}{|{\rm det}A_i(y)|_p}\|b_i\|_{CMO^{r_i^*}_\omega(\mathbb Q^n_p)}.
\end{align}
By $\frac{1}{r_i}>\frac{\zeta}{r_i^*}\frac{r_\omega}{r_{\omega}-1}$, there exists $\beta_{i}\in (1, r_\omega)$ satisfying $\frac{r_i^*}{\zeta}=r_i\beta_{i}'.$ Thus, by combining the H\"{o}lder inequality and the reverse H\"{o}lder condition again, we have
\begin{align}
&\Big(\int_{B_\gamma}|b_i(A_i(y)x)-b_{i,B_{\gamma+k_{A_i}}}|^{r_i}\omega(x)dx\Big)^{\frac{1}{r_i}}\nonumber
\\
&\leq \Big(\int_{B_\gamma}{|b_i(A_i(y)x)-b_{i,B_{\gamma+k_{A_i}}}|^{\frac{r_i^*}{\zeta}}dx\Big)^{\frac{\zeta}{r_i^*}}\Big(\int_{B_\gamma}\omega(x)^{\beta_i}dx\Big)^{\frac{1}{\beta_i r_i}}}\nonumber
\\
&\lesssim |B_\gamma|^{\frac{-\zeta}{r_i^*}}\omega(B_\gamma)^{\frac{1}{r_i}}\Big(\int_{B_\gamma}{|b_i(A_i(y)x)-b_{i,B_{\gamma+k_{A_i}}}|^{\frac{r_i^*}{\zeta}}dx\Big)^{\frac{\zeta}{r_i^*}}}.\nonumber
\end{align}
According to Proposition \ref{pro2.4DFan}, we have
\begin{align}
&\Big(\int_{B_\gamma}{|b_i(A_i(y)x)-b_{i,B_{\gamma+k_{A_i}}}|^{\frac{r_i^*}{\zeta}}dx\Big)^{\frac{\zeta}{r_i^*}}}\nonumber
\\
&\leq |{\rm det} A_i^{-1}(y)|_p^{\frac{\zeta}{r_i^*}}\Big(\int_{B_{\gamma+k_{A_i}}}{|b_i(z)-b_{i,B_{\gamma+k_{A_i}}}|^{\frac{r_i^*}{\zeta}}dz\Big)^{\frac{\zeta}{r_i^*}}}\nonumber
\\
&\leq |{\rm det} A_i^{-1}(y)|_p^{\frac{\zeta}{r_i^*}}\frac{|B_{\gamma+k_{A_i}}|^{\frac{\zeta}{r_i^*}}}{\omega(B_{\gamma+k_{A_i}})^{\frac{1}{r_i^*}}}\Big(\int_{B_{\gamma+k_{A_i}}}{|b_i(z)-b_{i,B_{\gamma+k_{A_i}}}|^{r_i^*}\omega(z)dz\Big)^{\frac{1}{r_i^*}}}.\nonumber
\end{align}
In view of (\ref{espower}), one has $\frac{|B_{\gamma+k_{A_i}}|}{|B_\gamma|}\simeq \|A_i(y)\|^n_p$. From this, we give
\begin{align}\label{esbiAi_Ap-CMO}
&\Big(\int_{B_\gamma}|b_i(A_i(y)x)-b_{i,B_{\gamma+k_{A_i}}}|^{r_i}\omega(x)dx\Big)^{\frac{1}{r_i}}\nonumber
\\
&\lesssim \omega(B_\gamma)^{\frac{1}{r_i}}|{\rm det} A_i^{-1}(y)|_p^{\frac{\zeta}{r_i^*}}\|A_i(y)\|_p^{\frac{n\zeta}{r_i^*}}\times
\nonumber
\\
&\,\,\,\,\,\,\,\,\,\,\,\,\,\,\,\,\,\,\,\,\,\times\Big(\frac{1}{\omega(B_{\gamma+k_{A_i}})}\int_{B_{\gamma+k_{A_i}}}{|b_i(z)-b_{i,B_{\gamma+k_{A_i}}}|^{r_i^*}\omega(z)dz\Big)^{\frac{1}{r_i^*}}}
\\
&\lesssim \omega(B_\gamma)^{\frac{1}{r_i}}|{\rm det} A_i^{-1}(y)|_p^{\frac{\zeta}{r_i^*}}\|A_i(y)\|_p^{\frac{n\zeta}{r_i^*}}.\|b_i\|_{{{CMO}}^{r_i^*}_{\omega}(\mathbb Q^n_p)}.\nonumber
\end{align}
As a consequence, by (\ref{J2i-compose-Ap}) and (\ref{I2i-biBgammakAi}), we infer
\begin{align}\label{J2iLemma2-Ap}
J_{2,i}\lesssim \omega(B_\gamma)^{\frac{1}{r_i}}\Big(\frac{\|A_i(y)\|^n_p}{|{\rm det}A_i(y)|_p}+|{\rm det} A_i^{-1}(y)|_p^{\frac{\zeta}{r_i^*}}\|A_i(y)\|_p^{\frac{n\zeta}{r_i^*}}\Big).\|b\|_{{{CMO}}^{r_i^*}_{\omega}(\mathbb Q^n_p)}.
\end{align}
Now, we will estimate $J_{3,i}$. By a same argument as (\ref{I3i}), (\ref{I3i-compose-power}) and (\ref{I2i-biBgammakAi}), we also have
\begin{align}
J_{3,i}&\leq\omega(B_{\gamma})^{\frac{1}{r_i}}\Big(\sum\limits_{j\in S_{k_{A_i}}}\big|b_{i,B_{\gamma+j-1}}-b_{i,B_{\gamma+j}}\big|+\big|b_{i,B_{\gamma+k_{A_i}}}-b_{i,A_i(y)B_\gamma}\big|\Big)\nonumber
\\
&\lesssim\omega(B_{\gamma})^{\frac{1}{r_i}}\Big(\sum\limits_{j\in S_{k_{A_i}}}\dfrac{1}{|B_{\gamma+j}|}\int_{B_{\gamma+j}}|b_i(z)-b_{i,B_{\gamma+j}}|dz +\nonumber
\\
&\,\,\,\,\,\,\,\,\,\,\,\,\,\,\,\,\,\,\,\,\,\,\,\,\,\,\,\,\,\,\,\,\,\,\,\,\,\,\,\,\,\,\,\,\,\,\,\,\,\,\,\,\,\,\,\,\,\,\,\,+\frac{1}{|A_i(y)B_\gamma|}\int\limits_{B_{\gamma+k_{A_i}}}{\big|b_i(x)-b_{i,B_{\gamma+k_{A_i}}}\big|dx}\Big)\nonumber
\\
&\leq\omega(B_{\gamma})^{\frac{1}{r_i}}\Big(\sum\limits_{j\in S_{k_{A_i}}}\|b_i\|_{CMO_\omega^{r_i^*}(\mathbb Q^n_p)} +\frac{\|A_i(y)\|_p^n}{|{\rm det}A_i(y)|_p}\|b_i\|_{CMO^{r_i^*}_\omega(\mathbb Q^n_p)}\Big)\nonumber
\\
&\leq\omega(B_{\gamma})^{\frac{1}{r_i}}\Big(|{\rm log}_p\|A_i(y)\|_p|+\frac{\|A_i(y)\|_p^n}{|{\rm det}A_i(y)|_p}\Big)\|b_i\|_{CMO^{r_i^*}_\omega(\mathbb Q^n_p)}.\nonumber
\end{align}
This together with (\ref{J1iLemma2}) and (\ref{J2iLemma2-Ap}) yields that the inequality (\ref{BDT-Ap}) is finished.
\\
In other words, by estimating as (\ref{esbiAi_Ap-CMO}) above, we also get
\begin{align}
\|f_i(A_i(y)\cdot)\|_{L^{q_i}_\omega(B_\gamma)}&\lesssim \omega(B_\gamma)^{\frac{1}{q_i}}|{\rm det} A_i^{-1}(y)|_p^{\frac{\zeta}{q_i^*}}\|A_i(y)\|_p^{\frac{n\zeta}{q_i^*}}\omega(B_{\gamma+k_{A_i}})^{\frac{-1}{q_i^*}}\|f_i\|_{L^{q_i^*}_\omega(B_{\gamma+k_{A_i}})}.\nonumber
\\
&=\omega(B_\gamma)^{\frac{1}{q_i}}.\mu_i^*(y).\omega(B_{\gamma+k_{A_i}})^{\frac{-1}{q_i^*}}\|f_i\|_{L^{q_i^*}_\omega(B_{\gamma+k_{A_i}})}.\nonumber
\end{align}
Hence, by (\ref{HfMatrix-Ap}) and (\ref{BDT-Ap}), we conclude that the proof of this lemma is finished.
\end{proof}
\begin{theorem}
Let the assumptions of Lemma \ref{LemmaCMO1Matrix} hold and
$$
\mathcal C_5=\int_{\mathbb Q^n_p}\dfrac{\Phi(y)}{|y|_p^n}\prod\limits_{i=1}^m\psi_i(y).\mu_i(y)dy<\infty.
$$
Then, for any $\gamma\in B_{\gamma}$, we have ${\mathcal{H}}^p_{\Phi,\vec A,\vec b}$ is bounded from $L^{q_1}_{\omega_1}(\mathbb Q_p^n)\times \cdots\times L^{q_m}_{\omega_m}(\mathbb Q^n_p)$ to $L^q_{\omega}(B_\gamma)$.
\end{theorem}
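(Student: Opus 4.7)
The plan is to apply Lemma \ref{LemmaCMO1Matrix} essentially as a black box and then close the estimate by enlarging each local Lebesgue norm to the global one. Since the hypotheses of Lemma \ref{LemmaCMO1Matrix} are assumed, its conclusion gives, for the fixed $\gamma\in\mathbb Z$,
\begin{equation}\label{startest}
\|{\mathcal H}^p_{\Phi,\vec A,\vec b}(\vec f)\|_{L^q_\omega(B_\gamma)}\lesssim p^{\sum_{i=1}^m\frac{\gamma(n+\alpha_i)}{r_i}}\mathcal B_{\vec r,\vec\omega}\int_{\mathbb Q^n_p}\frac{\Phi(y)}{|y|^n_p}\prod_{i=1}^m\psi_i(y)\mu_i(y)\|f_i\|_{L^{q_i}_{\omega_i}(B_{\gamma+k_{A_i}})}dy.
\end{equation}

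The key observation is that for each $i$, the local norm over the ball $B_{\gamma+k_{A_i}}$ is dominated by the full norm over $\mathbb Q^n_p$, that is $\|f_i\|_{L^{q_i}_{\omega_i}(B_{\gamma+k_{A_i}})}\leq \|f_i\|_{L^{q_i}_{\omega_i}(\mathbb Q^n_p)}$. Plugging this uniform bound into \eqref{startest} allows us to factor the $f_i$'s outside the integral in $y$, producing exactly the constant $\mathcal C_5$ multiplying the product of the global norms. The prefactor $p^{\sum_{i=1}^m\gamma(n+\alpha_i)/r_i}$ depends only on $\gamma$ (which is fixed once chosen) and is therefore absorbed into the implicit constant, together with $\mathcal B_{\vec r,\vec\omega}$ which is finite by the hypothesis $b_i\in CMO^{r_i}_{\omega_i}(\mathbb Q^n_p)$. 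This yields
\begin{equation*}
\|{\mathcal H}^p_{\Phi,\vec A,\vec b}(\vec f)\|_{L^q_\omega(B_\gamma)}\lesssim \mathcal C_5\prod_{i=1}^m\|f_i\|_{L^{q_i}_{\omega_i}(\mathbb Q^n_p)},
\end{equation*}
which is the required boundedness.

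There is essentially no genuine obstacle: once Lemma \ref{LemmaCMO1Matrix} is in hand, the conclusion is a two-line consequence. The only point requiring care is the dependence of the implicit constant on $\gamma$ coming from the factor $p^{\sum_{i=1}^m\gamma(n+\alpha_i)/r_i}$; this is harmless here since the target space is $L^q_\omega(B_\gamma)$ with a fixed $\gamma$, but it makes clear that a uniform-in-$\gamma$ statement (i.e.\ boundedness into $L^q_\omega(\mathbb Q^n_p)$) would require additional structural hypotheses and is not claimed. Thus the theorem reduces directly to the assumed finiteness of $\mathcal C_5$.
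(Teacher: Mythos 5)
Your proof is correct and follows essentially the same route as the paper: invoke Lemma \ref{LemmaCMO1Matrix}, dominate each local norm $\|f_i\|_{L^{q_i}_{\omega_i}(B_{\gamma+k_{A_i}})}$ by the global norm to extract $\mathcal C_5$, and absorb the $\gamma$-dependent prefactor and $\mathcal B_{\vec r,\vec\omega}$ into the constant. Your added remark that the resulting constant depends on the fixed $\gamma$ (so the statement does not self-improve to boundedness into $L^q_\omega(\mathbb Q^n_p)$) is a correct and worthwhile clarification that the paper leaves implicit.
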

\begin{proof}
For any $\gamma\in\mathbb Z$, by using Lemma \ref{LemmaCMO1Matrix}, we infer
\begin{align}
&\|{\mathcal{H}}^p_{\Phi,\vec A,\vec b}(\vec f)\|_{L^q_\omega(B_\gamma)}\lesssim p^{\sum\limits_{i=1}^m \frac{\gamma(n+\alpha_i)}{r_i}}\mathcal B_{\vec r,\vec\omega}.\int_{\mathbb Q^n_p}\dfrac{\Phi(y)}{|y|^n_p}\prod\limits_{i=1}^m\psi_i(y).\mu_i(y).\|f_i\|_{L^{q_i}_{\omega_i}(B_{\gamma+k_{A_i}})}dy.\nonumber
\end{align}
Thus, we have
$
\|{\mathcal{H}}^p_{\Phi,\vec A,\vec b}(\vec f)\|_{L^q_\omega(B_\gamma)}\lesssim \mathcal C_5.\mathcal B_{\vec r,\vec\omega}.\prod\limits_{i=1}^m\|f_i\|_{L^{q_i}_{\omega_i}(\mathbb Q^n_p)},$
which shows that the proof of this theorem is completed.
\end{proof}
\begin{theorem}
Let the assumptions of Lemma \ref{Lemma2} hold. Suppose $\omega(B_\gamma)\lesssim 1$, for all $\gamma\in\mathbb Z$, and 
\begin{align}
\mathcal C_6&=\int_{\mathbb Q^n_p}\dfrac{\Phi(y)}{|y|_p^n}\prod\limits_{i=1}^m\psi_i^*(y).\mu_i^*(y)\Big(\chi_{\{\|A_i(y)\|_p\leq 1\}}(y)\|A_i(y)\|_p^{\frac{-n\zeta}{q_i^*}}+\nonumber
\\
&\,\,\,\,\,\,\,\,\,\,\,\,\,\,\,\,\,\,+\chi_{\{\|A_i(y)\|_p>1\}(y)}\|A_i(y)\|_p^{\frac{-n(\delta-1)}{q_i^*\delta}}\Big)dy<\infty.\nonumber
\end{align}
Then, we have ${\mathcal{H}}^p_{\Phi,\vec A,\vec b}$ is bounded from $L^{q_1^*}_{\omega}(\mathbb Q_p^n)\times \cdots\times L^{q_m^*}_{\omega}(\mathbb Q^n_p)$ to $L^{q^*}_\omega(\mathbb Q^n_p)$.
\end{theorem}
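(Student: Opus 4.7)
The plan is to start from the localized estimate in Lemma~\ref{Lemma2}, convert the ball-weight factors on its right-hand side into explicit powers of $\|A_i(y)\|_p$, and then let the ball exhaust $\mathbb Q^n_p$ by monotone convergence, closely following the strategy used in the proofs of Theorems~\ref{theo-HfLq*} and \ref{theo-HfMorreyq*}.

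First I would apply Lemma~\ref{Lemma2} on $B_\gamma$ and enlarge each local norm via $\|f_i\|_{L^{q_i^*}_\omega(B_{\gamma+k_{A_i}})}\le\|f_i\|_{L^{q_i^*}_\omega(\mathbb Q^n_p)}$, obtaining for every $\gamma\in\mathbb Z$
\[
\|\mathcal H^p_{\Phi,\vec A,\vec b}(\vec f)\|_{L^{q^*}_\omega(B_\gamma)}\lesssim \mathcal B_{\vec r^*,\omega}\,\omega(B_\gamma)^{1/q^*}\!\!\int_{\mathbb Q^n_p}\!\frac{\Phi(y)}{|y|^n_p}\prod_{i=1}^m\frac{\psi_i^*(y)\mu_i^*(y)}{\omega(B_{\gamma+k_{A_i}})^{1/q_i^*}}\,dy\prod_{i=1}^m\|f_i\|_{L^{q_i^*}_\omega(\mathbb Q^n_p)}.
\]
The next move is to absorb $\omega(B_\gamma)^{1/q^*}$ into the $m$ integrand factors. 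Since $\zeta\ge 1$ and $r_\omega>1$, condition~\eqref{r*q*} forces $1/q^*\ge\sum_{i=1}^m 1/q_i^*$; combined with $\omega(B_\gamma)\lesssim 1$, this yields $\omega(B_\gamma)^{1/q^*}\lesssim\prod_{i=1}^m\omega(B_\gamma)^{1/q_i^*}$, exactly the trick used to pass from $\omega(B_R)^{1/q^*}$ to $\omega(B_R)^{1/q}$ in the proof of Theorem~\ref{theo-HfLq*}. The ball-weight factors then group into quotients $\bigl(\omega(B_\gamma)/\omega(B_{\gamma+k_{A_i}})\bigr)^{1/q_i^*}$.

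Now I would case-split on $\|A_i(y)\|_p$ using Proposition~\ref{pro2.3DFan}, exactly as in~\eqref{omegaB_Morrey-Leb}. When $\|A_i(y)\|_p\le 1$ we have $B_{\gamma+k_{A_i}}\subseteq B_\gamma$, and the $A_\zeta$ lower estimate gives $\omega(B_\gamma)/\omega(B_{\gamma+k_{A_i}})\lesssim\|A_i(y)\|_p^{-n\zeta}$; when $\|A_i(y)\|_p>1$, the opposite inclusion holds and the reverse-H\"older upper estimate with parameter $\delta\in(1,r_\omega)$ furnishes $\omega(B_\gamma)/\omega(B_{\gamma+k_{A_i}})\lesssim\|A_i(y)\|_p^{-n(\delta-1)/\delta}$. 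Taking $1/q_i^*$-powers and substituting back reproduces exactly the integrand defining $\mathcal C_6$, so the right-hand side is uniformly bounded in $\gamma$ by a constant times $\mathcal C_6\,\mathcal B_{\vec r^*,\omega}\prod_i\|f_i\|_{L^{q_i^*}_\omega(\mathbb Q^n_p)}$. Sending $\gamma\to+\infty$ and invoking the monotone convergence theorem then completes the argument.

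The main delicate point is the weight-redistribution step: I must verify that \eqref{r*q*} really supplies $1/q^*\ge\sum 1/q_i^*$ and that the assumption $\omega(B_\gamma)\lesssim 1$ converts this exponent inequality into the pointwise control I need on each individual $\omega(B_\gamma)^{1/q_i^*}$ factor. Everything else is a mechanical recycling of arguments already developed in Section~\ref{section3} together with the two-case reverse-H\"older split that is already encoded in the definition of $\mathcal C_6$, so no further difficulties are anticipated.
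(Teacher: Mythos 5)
Your proposal is correct and follows essentially the same route as the paper's own proof: apply Lemma \ref{Lemma2}, use $\omega(B_\gamma)\lesssim 1$ together with $\frac{1}{q^*}>\sum_{i}\frac{1}{q_i^*}$ (which indeed follows from \eqref{r*q*} since $\zeta\geq 1$ and $\frac{r_\omega}{r_\omega-1}>1$) to redistribute $\omega(B_\gamma)^{1/q^*}$ into the quotients $\big(\omega(B_\gamma)/\omega(B_{\gamma+k_{A_i}})\big)^{1/q_i^*}$, bound these by the two-case estimate of Proposition \ref{pro2.3DFan} exactly as in \eqref{omegaB_Morrey-Leb}, and conclude by monotone convergence as $\gamma\to+\infty$. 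No gaps.
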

\begin{proof}
In view of Lemma \ref{Lemma2}, for any $R\in\mathbb Z$, it is clear to see that
\begin{align}
&\|{\mathcal H}^{p}_{\Phi,\vec A,\vec b}(\vec f)\|_{L^{q^*}_{\omega}(B_R)}\nonumber
\\
&\lesssim \omega(B_R)^{\frac{1}{q^*}}.\mathcal B_{\vec r^*,\omega}.\Big(\int_{\mathbb Q^n_p}\dfrac{\Phi(y)}{|y|_p^n}\prod\limits_{i=1}^m\psi_i^*(y).\mu_i^*(y)\dfrac{1}{\omega(B_{R+k_{A_i}})^{\frac{1}{q_i^*}}}\|f_i\|_{L^{q_i^*}_\omega(B_{R+k_{A_i}})}dy\Big).\nonumber
\end{align}
Next, by having inequality $\frac{1}{q^*}> \frac{1}{q_1^*}+...+\frac{1}{q_m^*}$ and assuming $\omega(B_R)\lesssim 1$ for any $R\in\mathbb Z$, we have
$
\omega(B_R)^{\frac{1}{q^*}}\leq \prod\limits_{i=1}^m \omega(B_R)^{\frac{1}{q_i^*}}.
$ Thus,
\begin{align}
\|{\mathcal H}^{p}_{\Phi,\vec A,\vec b}(\vec f)\|_{L^{q^*}_{\omega}(B_R)}&\lesssim \mathcal B_{\vec r^*,\omega}.\Big(\int_{\mathbb Q^n_p}\dfrac{\Phi(y)}{|y|_p^n}\prod\limits_{i=1}^m\psi_i^*(y).\mu_i^*(y)\Big(\dfrac{\omega(B_R)}{\omega(B_{R+k_{A_i}})}\Big)^{\frac{1}{q_i^*}}dy\Big)\prod\limits_{i=1}^m\|f_i\|_{L^{q_i^*}_\omega(\mathbb Q^n_p)}.\nonumber
\\
&\lesssim \mathcal C_6.\mathcal B_{\vec r^*,\omega}.\prod\limits_{i=1}^m\|f_i\|_{L^{q_i^*}_\omega(\mathbb Q^n_p)}.\nonumber
\end{align}
Consequence, by letting $R\to +\infty$ and applying the monotone convergence theorem of Lebesgue, we also have
\begin{align}
\|{\mathcal H}^{p}_{\Phi,\vec A,\vec b}(\vec f)\|_{L^{q^*}_{\omega}(\mathbb Q^n_p)}\lesssim \mathcal C_6.\mathcal B_{\vec r^*,\omega}.\prod\limits_{i=1}^m\|f_i\|_{L^{q_i^*}_\omega(\mathbb Q^n_p)}.\nonumber
\end{align}
Therefore, the theorem is completely proved.
\end{proof}
\begin{theorem}
Let $1<\zeta<\infty$, $1\leq q^*,q_i,  r_i^*<\infty$, $-n<\alpha_i<n(\zeta-1)$, $\omega(x)=|x|_p^{\alpha}$, $\omega_i(x)=|x|_p^{\alpha_i}$, for all $i=1, ..., m$, such that
\begin{align}\label{zetaq^*}
\dfrac{\alpha_1}{q_1}+\cdots+\dfrac{\alpha_m}{q_m}=\dfrac{\zeta\alpha}{q^*},\,\, \dfrac{1}{q_1}+\cdots+\dfrac{1}{q_m}=\frac{\zeta}{q^*}, 
\end{align}
\begin{align}\label{qr-maximal}
\dfrac{1}{q_1}+\cdots+\dfrac{1}{q_m}+\dfrac{1}{r_1^*}+\cdots+\dfrac{1}{r_m^*}=1.
\end{align}
If $b_i\in CMO^{r_i^*}(\mathbb Q^n_p)$, for all $i=1,...,m,$ and 
\begin{align}
\mathcal C_7=\int_{\mathbb Q^n_p}\dfrac{\Phi(y)}{|y|_p^n}\prod\limits_{i=1}^m\Gamma_i(y)\|A_i(y)\|_p^{-\frac{(\zeta+n)}{\zeta q_i}}dy<\infty,\nonumber
\end{align}
where 
\begin{align}
\Gamma_i(y)&=\Big(1+|{\rm log}_p\|A_i(y)\|_p|+\dfrac{2\|A_i(y)\|_p^n}{|{\rm det}A_i(y)|_p}+\|A_i(y)\|_p^{\frac{n}{r_i^*}}|{\rm det}A_i^{-1}(y)|_p^{\frac{1}{r_i^*}}\Big)\times
\nonumber
\\
&\,\,\,\,\,\,\,\,\,\times |{\rm det} A_i^{-1}(y)|_p^{\frac{1}{q_i}}\|A_i(y)\|_p^{\frac{n}{q_i}},
\end{align}
then we have
\begin{align}
\|\mathcal{M}^{mod}(\mathcal H^p_{\Phi,\vec A,\vec b})(\vec f)\|_{L^{q^*}_\omega(\mathbb Q^n_p)}\lesssim \mathcal C_7.\Big(\prod\limits_{i=1}^m\|b_i\|_{CMO^{r_i^*}(\mathbb Q^n_p)}\Big).\prod\limits_{i=1}^m\|f_i\|_{L^{\zeta q_i}_{\omega_i}(\mathbb Q_p^n)}.\nonumber
\end{align}
\end{theorem}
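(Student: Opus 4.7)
The plan is to estimate $\mathcal{M}^{mod}(\mathcal{H}^p_{\Phi,\vec A,\vec b}(\vec f))$ pointwise and then take the $L^{q^*}_\omega$-norm. First I exploit the ultrametric structure of $\mathbb{Q}^n_p$: whenever $|x|_p\leq p^\gamma$ one has $B_\gamma(x)=B_\gamma(0)=B_\gamma$, so
\[
\mathcal{M}^{mod}\!\bigl(\mathcal{H}^p_{\Phi,\vec A,\vec b}(\vec f)\bigr)(x)
=\sup_{\gamma:\, p^\gamma\geq|x|_p}\frac{1}{|B_\gamma|}\int_{B_\gamma}|\mathcal{H}^p_{\Phi,\vec A,\vec b}(\vec f)(z)|\,dz.
\]
By Minkowski's integral inequality each such average is at most $\int(\Phi(y)/|y|_p^n)\cdot|B_\gamma|^{-1}\int_{B_\gamma}\prod_i|b_i(z)-b_i(A_i(y)z)|\cdot|f_i(A_i(y)z)|\,dz\,dy$, and since (\ref{qr-maximal}) gives $\sum 1/r_i^*+\sum 1/q_i=1$, H\"older on the inner integral splits it as $\prod_i\|b_i-b_i(A_i(y)\cdot)\|_{L^{r_i^*}(B_\gamma)}\cdot\prod_i\|f_i(A_i(y)\cdot)\|_{L^{q_i}(B_\gamma)}$.

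For the $b_i$-factors I would reuse verbatim the three-piece decomposition $I_{1,i}+I_{2,i}+I_{3,i}$ from the proof of inequality (\ref{BDT}), now in its unweighted form ($\omega\equiv 1$, $\alpha_i=0$, exponent $r_i^*$), which delivers $p^{\gamma n/r_i^*}$ times exactly the first bracket of $\Gamma_i(y)$ and the norm $\|b_i\|_{CMO^{r_i^*}(\mathbb{Q}^n_p)}$. For the $f_i$-factors I would change variables $z\mapsto A_i(y)z$ (using $A_i(y)B_\gamma\subset B_{\gamma+k_{A_i}}$) and then invoke Proposition \ref{pro2.4DFan}, legitimate because $-n<\alpha_i<n(\zeta-1)$ places $\omega_i\in A_\zeta$; applied to $|f_i|^{q_i}$ and followed by a $q_i$-th root, this produces
\[
|\textrm{det}\,A_i^{-1}(y)|_p^{1/q_i}\cdot|B_{\gamma+k_{A_i}}|^{1/q_i}\omega_i(B_{\gamma+k_{A_i}})^{-1/(q_i\zeta)}\|f_i\|_{L^{\zeta q_i}_{\omega_i}(B_{\gamma+k_{A_i}})},
\]
whose $y$-dependent piece matches the second factor $|\textrm{det}\,A_i^{-1}(y)|_p^{1/q_i}\|A_i(y)\|_p^{n/q_i}$ of $\Gamma_i(y)$.

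Assembling the two estimates and computing ball volumes with (\ref{espower}), the balance identities (\ref{zetaq^*}) and (\ref{qr-maximal}) should force all $\gamma$-dependent prefactors to collapse into a single decaying power $p^{-\gamma(\alpha+n)/q^*}$, while the residual $\|A_i(y)\|_p$-powers pack into the integrand of $\mathcal{C}_7$. Replacing $\|f_i\|_{L^{\zeta q_i}_{\omega_i}(B_{\gamma+k_{A_i}})}$ by its global counterpart and taking the supremum over $\gamma\geq\log_p|x|_p$ then yields a pointwise estimate proportional to $|x|_p^{-(\alpha+n)/q^*}\cdot\mathcal{C}_7\prod_i\|b_i\|\prod_i\|f_i\|$, from which the theorem follows upon integrating the $q^*$-th power against $|x|_p^\alpha\,dx$. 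The hard part will be the tight bookkeeping of $\gamma$-exponents: one must verify that $|B_\gamma|^{-1}$, the $\prod p^{\gamma n/r_i^*}$ from the $b_i$-side, and the $\prod|B_{\gamma+k_{A_i}}|^{1/q_i}\omega_i(B_{\gamma+k_{A_i}})^{-1/(q_i\zeta)}$ from the $f_i$-side conspire, via (\ref{zetaq^*})--(\ref{qr-maximal}), to yield precisely $p^{-\gamma(\alpha+n)/q^*}$ (which requires $\alpha+n>0$, automatic from $\alpha_i>-n$), together with the parallel matching of $\|A_i(y)\|_p$-exponents against the power read off the definition of $\mathcal{C}_7$.
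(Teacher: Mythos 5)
Your setup---reduction to averages over balls $B_\gamma$ centered at the origin, Minkowski's inequality, the H\"older splitting dictated by (\ref{qr-maximal}), and the unweighted version of the decomposition behind (\ref{BDT}) for the $b_i$-factors---agrees with the paper's proof. The gap is in your treatment of the $f_i$-factors. You bound $\|f_i(A_i(y)\cdot)\|_{L^{q_i}(B_\gamma)}$ via Proposition \ref{pro2.4DFan} by $|\mathrm{det}\,A_i^{-1}(y)|_p^{1/q_i}|B_{\gamma+k_{A_i}}|^{1/q_i}\omega_i(B_{\gamma+k_{A_i}})^{-1/(\zeta q_i)}\|f_i\|_{L^{\zeta q_i}_{\omega_i}(B_{\gamma+k_{A_i}})}$ and then replace the local norm by the global one. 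Your bookkeeping up to that point is in fact consistent: with (\ref{zetaq^*}) and (\ref{qr-maximal}) the $\gamma$-dependent prefactors do collapse to $p^{-\gamma(\alpha+n)/q^*}$, and since $\alpha+n>0$ the supremum over $\gamma\ge\log_p|x|_p$ gives $|x|_p^{-(\alpha+n)/q^*}$. But this majorant does not belong to $L^{q^*}_\omega(\mathbb Q^n_p)$:
\[
\int_{\mathbb Q^n_p}\Big(|x|_p^{-\frac{\alpha+n}{q^*}}\Big)^{q^*}|x|_p^{\alpha}\,dx=\int_{\mathbb Q^n_p}|x|_p^{-n}\,dx=\sum_{\gamma\in\mathbb Z}(1-p^{-n})=\infty,
\]
so the final step of ``integrating the $q^*$-th power against $|x|_p^{\alpha}dx$'' cannot be carried out. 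Discarding the localization to $B_{\gamma+k_{A_i}}$ destroys exactly the decay needed to make the $x$-integral converge.

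The paper avoids this by never globalizing at the pointwise stage: since $x\in B_\gamma$ forces $\|A_i(y)\|_p^{-1}x\in B_{\gamma+k_{A_i}}$, it bounds
\[
\|f_i(A_i(y)\cdot)\|_{L^{q_i}(B_\gamma)}\lesssim |B_\gamma|^{\frac{1}{q_i}}|\mathrm{det}\,A_i^{-1}(y)|_p^{\frac{1}{q_i}}\|A_i(y)\|_p^{\frac{n}{q_i}}\Big(\mathcal M\big(|f_i|^{q_i}\big)\big(\|A_i(y)\|_p^{-1}x\big)\Big)^{\frac{1}{q_i}},
\]
so the pointwise estimate for $\mathcal M^{mod}(\mathcal H^p_{\Phi,\vec A,\vec b}(\vec f))(x)$ carries Hardy--Littlewood maximal functions evaluated at rescaled points rather than a fixed negative power of $|x|_p$. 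Only after taking the $L^{q^*}_\omega$-norm, applying Minkowski and H\"older via (\ref{zetaq^*}), and changing variables does one invoke the boundedness of $\mathcal M$ on $L^{\zeta}_{\omega_i}(\mathbb Q^n_p)$ (this is precisely where $\omega_i\in A_\zeta$, i.e.\ $-n<\alpha_i<n(\zeta-1)$, is used) to produce $\|f_i\|_{L^{\zeta q_i}_{\omega_i}(\mathbb Q^n_p)}$ together with the factor $\|A_i(y)\|_p^{-(\alpha_i+n)/(\zeta q_i)}$ that is absorbed into $\mathcal C_7$. To repair your argument you would have to retain the $x$-dependence through a maximal function (or treat the supremum over $\gamma$ and the $x$-integration jointly), which is essentially the paper's device; Proposition \ref{pro2.4DFan} alone cannot substitute for it.
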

\begin{proof}
For the sake of simplicity, we denote $\mathcal B_{\vec r^*}= \prod\limits_{i=1}^m\|b_i\|_{CMO^{r_i^*}(\mathbb Q^n_p)}$.
Now, let $x\in\mathbb Q_p^n$ and fix a ball $B_\gamma$ such that $x\in B_{\gamma}$. In view of (\ref{qr-maximal}), by using the H\"{o}lder inequality, we have
\begin{align}
&\dfrac{1}{|B_{\gamma}|}\int_{B_\gamma}|\mathcal H^p_{\Phi,\vec A,\vec b}(\vec f)(z)|dz\nonumber
\\
&\leq \dfrac{1}{|B_{\gamma}|}\int_{\mathbb Q^n_p}\dfrac{\Phi(y)}{|y|_p^n}\prod\limits_{i=1}^m\|f_i(A_i(y)\cdot)\|_{L^{q_i}(B_\gamma)}\prod\limits_{i=1}^m\|b_i(\cdot)-b_i(A_i(y)\cdot)\|_{L^{r_i^*}(B_{\gamma})}dy.\nonumber
\end{align}
For $i=1,...,m$, by estimating as (\ref{BDT}) above, we also get
\begin{align}
&\big\|b_i(\cdot)-b_i(A_i(y)\cdot)\big\|_{L^{r_i^*}(B_\gamma)}\nonumber
\\
&\lesssim |B_\gamma|^{\frac{1}{r_i^*}}\Big(1+|{\rm log}_p\|A_i(y)\|_p|+\dfrac{2\|A_i(y)\|_p^n}{|{\rm det}A_i(y)|_p}+\|A_i(y)\|_p^{\frac{n}{r_i^*}}|{\rm det}A_i^{-1}(y)|_p^{\frac{1}{r_i^*}}\Big).\big\|b_i\big\|_{{ {CMO}}^{r_i^*}(\mathbb Q_p^n)}.\nonumber
\end{align}
By $x\in B_{\gamma}$, we imply that $\|A_i(y)\|^{-1}_p.x\in B_{\gamma+k_{A_i}}$
Thus, by definition of the Hardy-Littlewood maximal  operator, one has
\begin{align}
\|f_i(A_i(y)\cdot)\|_{L^{q_i}(\mathbb Q^n_p)}&=|{\rm det}A_i^{-1}(y)|_p^{\frac{1}{q_i}}\Big(\int_{A_i(y)B_\gamma}|f_i(t)|^{q_i}dt\Big)^{\frac{1}{q_i}}\leq |{\rm det}A_i^{-1}(y)|_p^{\frac{1}{q_i}}\Big(\int_{B_{\gamma+k_{A_i}}}|f_i(t)|^{q_i}dt\Big)^{\frac{1}{q_i}}\nonumber
\\
&=\dfrac{|B_{\gamma+k_{A_i}}|^{\frac{1}{q_i}}}{|B_\gamma|^{\frac{1}{q_i}}}|B_\gamma|^{\frac{1}{q_i}}|{\rm det}A_i^{-1}(y)|_p^{\frac{1}{q_i}}\Big(\dfrac{1}{|B_{\gamma+k_{A_i}}|}\int_{B_{\gamma+k_{A_i}}}|f_i(t)|^{q_i}dt\Big)^{\frac{1}{q_i}}\nonumber
\\
&\lesssim |B_\gamma|^{\frac{1}{q_i}}|{\rm det}A_i^{-1}(y)|_p^{\frac{1}{q_i}}\|A_i(y)\|_p^{\frac{n}{q_i}}\Big(\mathcal{M}(|f_i|^{q_i})(\|A_i(y)\|_p^{-1}.x)\Big)^{\frac{1}{q_i}}.\nonumber
\end{align}
As mentioned above, we give 
\begin{align}
\dfrac{1}{|B_{\gamma}|}\int_{B_\gamma}|\mathcal H^p_{\Phi,\vec A,\vec b}(\vec f)(z)|dz &\lesssim \mathcal B_{\vec r^*}.\int_{\mathbb Q^n_p}\dfrac{\Phi(y)}{|y|_p^n}\prod\limits_{i=1}^m\Gamma_i(y)\Big(\mathcal{M}(|f_i|^{q_i})(\|A_i(y)\|_p^{-1}.x)\Big)^{\frac{1}{q_i}}dy.\nonumber
\end{align}
Thus, we infer that
\begin{align}
\mathcal{M}^{mod}(\mathcal H^p_{\Phi,\vec A,\vec b}(\vec f))(x)&\lesssim \mathcal B_{\vec r^*}.\int_{\mathbb Q^n_p}\dfrac{\Phi(y)}{|y|_p^n}\prod\limits_{i=1}^m\Gamma_i(y)\Big(\mathcal{M}(|f_i|^{q_i})(\|A_i(y)\|_p^{-1}.x)\Big)^{\frac{1}{q_i}}dy.\nonumber
\end{align}
Thus, by  using the assumption (\ref{zetaq^*}) and the Minkowski inequality and the H\"{o}lder inequality, we obtain
\begin{align}\label{MmodHp}
&\|\mathcal{M}^{mod}(\mathcal H^p_{\Phi,\vec A,\vec b}(\vec f))\|_{L^{q^*}_\omega(\mathbb Q^n_p)}\nonumber
\\
&\leq\mathcal B_{\vec r^*}.\int_{\mathbb Q^n_p}\dfrac{\Phi(y)}{|y|_p^n}\prod\limits_{i=1}^m\Gamma_i(y)\Big(\int_{\mathbb Q^n_p}\prod\limits_{i=1}^m\Big(\mathcal{M}(|f_i|^{q_i})(\|A_i(y)\|_p^{-1}.x)\Big)^{\frac{q^*}{q_i}}\omega(x)dx\Big)^{\frac{1}{q^*}}dy
\nonumber
\\
&=\mathcal B_{\vec r^*}.\int_{\mathbb Q^n_p}\dfrac{\Phi(y)}{|y|_p^n}\prod\limits_{i=1}^m\Gamma_i(y)\Big(\int_{\mathbb Q^n_p}\prod\limits_{i=1}^m\Big(\mathcal{M}(|f_i|^{q_i})(\|A_i(y)\|_p^{-1}.x)\omega_i^{\frac{1}{\zeta}}\Big)^{\frac{q^*}{q_i}}dx\Big)^{\frac{1}{q^*}}dy
\nonumber
\\
&\leq \mathcal B_{\vec r^*}.\int_{\mathbb Q^n_p}\dfrac{\Phi(y)}{|y|_p^n}\prod\limits_{i=1}^m\Gamma_i(y)\prod\limits_{i=1}^m\Big(\int_{\mathbb Q^n_p}\mathcal{M}(|f_i|^{q_i})^{\zeta}(\|A_i(y)\|_p^{-1}.x)\omega_idx\Big)^{\frac{1}{\zeta q_i}}dy.
\end{align}
For $i=1,...,m$, by Proposition \ref{pro_power}, we have $\omega_i\in A_{\zeta}$. From this, by virtue of the boundedness of the Hardy-Littlewood maximal operator on the Lebesgue spaces with the Muckenhoupt weights, we have that
\begin{align}
&\Big(\int_{\mathbb Q^n_p}M(|f_i|^{q_i})^{\zeta}(\|A_i(y)\|_p^{-1}.x)\omega_idx\Big)^{\frac{1}{\zeta q_i}}=\Big(\int_{\mathbb Q^n_p}M(|f_i|^{q_i})^{\zeta}(z).|\,\|A_i(y)\|_pz|_p^{\alpha_i}.|\,\|A_i(y)\|_p^n|_pdz\Big)^{\frac{1}{\zeta q_i}}\nonumber
\\
&=\|A_i(y)\|_p^{\frac{-(\alpha_i+n)}{\zeta q_i}}\Big(\int_{\mathbb Q^n_p}M(|f_i|^{q_i})^{\zeta}(z)\omega_i(z)dz\Big)^{\frac{1}{\zeta q_i}}\lesssim \|A_i(y)\|_p^{\frac{-(\alpha_i+n)}{\zeta q_i}}\|f_i\|_{L^{\zeta q_i}_{\omega_i}(\mathbb Q^n_p)}.\nonumber
\end{align}
This together with (\ref{MmodHp}) yields that the proof of this theorem is completed.
\end{proof}
In what follows, we set
$$
\mathcal C_8=\int_{\mathbb Q^n_p}\dfrac{\Phi(y)}{|y|_p^n}\prod\limits_{i=1}^m \|A_i^{-1}(y)\|_p^{-(\alpha_i+n)\lambda_i}|{\rm log_p}\|A_i(y)\|_p|dy.
$$
\begin{theorem}
Suppose the hypothesis in Lemma \ref{LemmaCMO1Matrix} holds. Let $\lambda_i\in (\frac{-1}{q_i},0)$ for all $i=1,...,m$,  and conditions (\ref{DK1}) and (\ref{lambdaMorrey}) be true. Assume that
\begin{align}\label{|A|<1}
 {\rm supp}(\Phi)\subset \mathop\cap\limits_{i=1}^m \{y\in\mathbb Q^n_p:\|A_i(y)\|_p<1\}.
\end{align}
(i)\,If $\mathcal C_8<\infty$, then ${\mathcal{H}}^p_{\Phi,\vec A,\vec b}$ is bounded from ${\mathop{B}\limits^.}^{q_1,\lambda_1}_{\omega_1}(\mathbb Q_p^n)\times \cdots\times {\mathop{B}\limits^.}^{q_m, \lambda_m}_{\omega_m}(\mathbb Q^n_p)$ to ${\mathop{B}\limits^.}^{q,\lambda}_\omega(\mathbb Q_p^n)$.
\\
(ii) If\, ${\mathcal{H}}^p_{\Phi,\vec A,\vec b}$ is bounded from ${\mathop{B}\limits^.}^{q_1,\lambda_1}_{\omega_1}(\mathbb Q_p^n)\times \cdots\times {\mathop{B}\limits^.}^{q_m, \lambda_m}_{\omega_m}(\mathbb Q^n_p)$ to ${\mathop{B}\limits^.}^{q,\lambda}_\omega(\mathbb Q_p^n)$ for all $\vec b=(b_1, ..., b_m)\in CMO_{\omega_1}^{r_1}(\mathbb Q^n_p)\times\cdots\times
CMO_{\omega_m}^{r_m}(\mathbb Q^n_p)$, then $\mathcal C_8<\infty$.
\end{theorem}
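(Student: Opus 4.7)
The plan is to derive (i) from Lemma \ref{LemmaCMO1Matrix} by passing to Morrey norms and simplifying on $\mathrm{supp}(\Phi)$, and to prove (ii) by an explicit test-function argument.

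For (i), I would start from the pointwise $L^q_\omega(B_\gamma)$ bound given by Lemma \ref{LemmaCMO1Matrix}, divide both sides by $\omega(B_\gamma)^{1/q+\lambda}$, and pull out $\omega_i(B_{\gamma+k_{A_i}})^{1/q_i+\lambda_i}$ so that each $\|f_i\|_{L^{q_i}_{\omega_i}(B_{\gamma+k_{A_i}})}$ is controlled by $\|f_i\|_{{\mathop{B}\limits^.}^{q_i,\lambda_i}_{\omega_i}(\mathbb Q_p^n)}$. By (\ref{espower}), (\ref{lambdaMorrey}) and the balance conditions of Section \ref{section4}, the resulting $\gamma$-dependent factor telescopes to $\prod_{i=1}^m\|A_i(y)\|_p^{(\alpha_i+n)(1/q_i+\lambda_i)}$, exactly as in the proof of Theorem \ref{theo-HfMorrey}. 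The task then reduces to the pointwise inequality
\begin{equation*}
\psi_i(y)\mu_i(y)\|A_i(y)\|_p^{(\alpha_i+n)(1/q_i+\lambda_i)}\lesssim |\log_p\|A_i(y)\|_p|\,\|A_i^{-1}(y)\|_p^{-(\alpha_i+n)\lambda_i}
\end{equation*}
on $\mathrm{supp}(\Phi)$. Using (\ref{|A|<1}), (\ref{DK1})--(\ref{|det|}), each additive piece of $\psi_i(y)$ is controlled: $\|A_i\|_p^n/|\det A_i|_p\lesssim 1$ (since $|\det A_i|_p\geq \|A_i^{-1}\|_p^{-n}\gtrsim\|A_i\|_p^n$), the middle term collapses to $(\|A_i^{-1}\|_p\|A_i\|_p)^{(\alpha_i+n)/r_i}\lesssim 1$ by (\ref{Aidet}), and $|\log_p\|A_i\|_p|\geq 1$ because $\|A_i(y)\|_p<1$; hence $\psi_i(y)\lesssim |\log_p\|A_i(y)\|_p|$. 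From (\ref{Aidet}) also $\mu_i(y)\lesssim\|A_i^{-1}\|_p^{(\alpha_i+n)/q_i}$, and the factorization $(\|A_i^{-1}\|_p\|A_i\|_p)^{(\alpha_i+n)/q_i}\lesssim 1$ combined with $\|A_i\|_p^{(\alpha_i+n)\lambda_i}\lesssim \|A_i^{-1}\|_p^{-(\alpha_i+n)\lambda_i}$ (valid since $(\alpha_i+n)\lambda_i<0$) yields the displayed estimate. Integration in $y$ then gives (i).

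For (ii), I would test against $b_i(x)=\log_p|x|_p$ and $f_i(x)=|x|_p^{(\alpha_i+n)\lambda_i}$. A direct spherical decomposition gives $b_{i,B_\gamma}=\gamma-c_n$ for a constant $c_n$, whereupon the absolute convergence of $\sum_{m\geq 0}|c_n-m|^{r_i}p^{-m(\alpha_i+n)}$ (using $\alpha_i+n>0$) shows $b_i\in CMO^{r_i}_{\omega_i}(\mathbb Q_p^n)$ with norm independent of $\gamma$. Arguing as in Theorem \ref{theo-HfMorrey}, the chosen $f_i$ and $|\cdot|_p^{(\alpha+n)\lambda}$ have unit central Morrey norms. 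Since $|A_i(y)x|_p\leq \|A_i(y)\|_p|x|_p$ and $\|A_i(y)\|_p<1$ on $\mathrm{supp}(\Phi)$, we obtain
\begin{equation*}
b_i(x)-b_i(A_i(y)x)=\log_p|x|_p-\log_p|A_i(y)x|_p\geq -\log_p\|A_i(y)\|_p=|\log_p\|A_i(y)\|_p|>0.
\end{equation*}
Substituting, applying (\ref{DK3}) to the negative exponent $(\alpha_i+n)\lambda_i$, and using (\ref{lambdaMorrey}) produces the pointwise lower bound $\mathcal H^p_{\Phi,\vec A,\vec b}(\vec f)(x)\gtrsim \mathcal C_8\,|x|_p^{(\alpha+n)\lambda}$. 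Taking ${\mathop{B}\limits^.}^{q,\lambda}_\omega$-norms on both sides and invoking the boundedness hypothesis forces $\mathcal C_8<\infty$.

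The main obstacle is the pointwise collapse of $\psi_i(y)$ in (i): without the support assumption (\ref{|A|<1}), neither the middle geometric summand nor $\|A_i\|_p^n/|\det A_i|_p$ need be bounded, and the entire reduction to $\mathcal C_8$ relies on (\ref{|A|<1}) together with (\ref{DK1}) forcing every additive piece of $\psi_i$ except the logarithmic one to be absorbed into a constant.
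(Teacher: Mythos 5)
Your proposal is correct and follows essentially the same route as the paper: part (i) is obtained from Lemma \ref{LemmaCMO1Matrix} by dividing by $\omega(B_\gamma)^{1/q+\lambda}$, telescoping the weight factors via (\ref{espower}), (\ref{lambdaMorrey}) and the Section \ref{section4} balance conditions, and then collapsing $\psi_i(y)\mu_i(y)\|A_i(y)\|_p^{(\alpha_i+n)(1/q_i+\lambda_i)}$ to $|\log_p\|A_i(y)\|_p|\,\|A_i^{-1}(y)\|_p^{-(\alpha_i+n)\lambda_i}$ using (\ref{DK1}), (\ref{Aidet}) and (\ref{|A|<1}); part (ii) uses exactly the paper's test functions $b_i(x)=\log_p|x|_p$, $f_i(x)=|x|_p^{(\alpha_i+n)\lambda_i}$, the same computation of $b_{i,B_\gamma}=\gamma-\tfrac{1}{p^n-1}$, and the same positivity/lower-bound argument. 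No gaps.
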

\begin{proof}
Firstly, we prove the part $(\rm i)$ of the theorem. For any $R\in\mathbb Z$, by Lemma \ref{LemmaCMO1Matrix}, we get
\begin{align}
&\frac{1}{\omega(B_R)^{\frac{1}{q}+\lambda}}\|{\mathcal{H}}^p_{\Phi,\vec A,\vec b}(\vec f)\|_{L^q_\omega(B_R)}\lesssim p^{\sum\limits_{i=1}^m \frac{R(n+\alpha_i)}{r_i}}.\mathcal B_{\vec r,\vec\omega}.\int_{\mathbb Q^n_p}\dfrac{\Phi(y)}{|y|^n_p}\prod\limits_{i=1}^m\psi_i(y).\mu_i(y)\times\nonumber
\\
&\times\dfrac{1}{\omega_i(B_{R+k_{A_i}})^{\frac{1}{q_i}+\lambda_i}}\|f_i\|_{L^{q_i}_{\omega_i}(B_{R+k_{A_i}})}\dfrac{\prod\limits_{i=1}^m \omega_i(B_{R+k_{A_i}})^{\frac{1}{q_i}+\lambda_i}}{\omega(B_R)^{\frac{1}{q}+\lambda}}dy\nonumber
\\
&\leq \mathcal B_{\vec r,\vec\omega}.\Big(\int_{\mathbb Q^n_p}\dfrac{\Phi(y)}{|y|^n_p}\prod\limits_{i=1}^m\psi_i(y).\mu_i(y).\dfrac{p^{\sum\limits_{i=1}^m \frac{R(n+\alpha_i)}{r_i}}\prod\limits_{i=1}^m \omega_i(B_{R+k_{A_i}})^{\frac{1}{q_i}+\lambda_i}}{\omega(B_R)^{\frac{1}{q}+\lambda}}dy\Big)\prod\limits_{i=1}^m\|f_i\|_{{\mathop{B}\limits^{.}}^{q_i,\lambda_i}_{\omega_i}(\mathbb Q^n_p)}.\nonumber
\end{align}
Now, by (\ref{espower}) and (\ref{lambdaMorrey}), we calculate
\begin{align}
\dfrac{p^{\sum\limits_{i=1}^m \frac{R(n+\alpha_i)}{r_i}}\prod\limits_{i=1}^m \omega_i(B_{R+k_{A_i}})^{\frac{1}{q_i}+\lambda_i}}{\omega(B_R)^{\frac{1}{q}+\lambda}}&\simeq \dfrac{p^{\sum\limits_{i=1}^m \frac{R(n+\alpha_i)}{r_i}+\sum\limits_{i=1}^m(R+k_{A_i})(\alpha_i+n)(\frac{1}{q_i}+\lambda_i)}}{p^{R(\alpha+n)(\frac{1}{q}+\lambda)}}
\nonumber
\\
&=p^{\sum\limits_{i=1}^m k_{A_i}(\alpha_i+n)(\frac{1}{q_i}+\lambda_i)}=\prod\limits_{i=1}^m\|A_i(y)\|_p^{(\alpha_i+n)(\frac{1}{q_i}+\lambda_i)}.\nonumber
\end{align}
Hence, one has
\begin{align}\label{Morrey-CMO-Hp}
\|{\mathcal{H}}^p_{\Phi,\vec A,\vec b}(\vec f)\|_{{\mathop{B}\limits^{.}}^{q,\lambda}_{\omega}(\mathbb Q^n_p)}&\lesssim \mathcal B_{\vec r,\vec\omega}.\Big(\int_{\mathbb Q^n_p}\dfrac{\Phi(y)}{|y|^n_p}\prod\limits_{i=1}^m\psi_i(y)\mu_i(y)\|A_i(y)\|_p^{(\alpha_i+n)(\frac{1}{q_i}+\lambda_i)}dy\Big)\times
\nonumber
\\
&\,\,\,\,\,\times\prod\limits_{i=1}^m\|f_i\|_{{\mathop{B}\limits^{.}}^{q_i,\lambda_i}_{\omega_i}(\mathbb Q^n_p)}.
\end{align}
Note that, by the hypothesis (\ref{|A|<1}), we see that
$$
|{\rm log}_p\|A_i(y)\|_p|\geq 1,\,\textit{\rm for all}\, y\in {\rm supp}(\Phi).
$$
As mentioned above, by (\ref{|det|}) and (\ref{DK1}), we make
\begin{align}
&\psi_i(y)\mu_i(y)\|A_i(y)\|_p^{(\alpha_i+n)(\frac{1}{q_i}+\lambda_i)}\nonumber
\\
&\lesssim \big(1+\|A_i^{-1}(y)\|_p^{\frac{(\alpha_i+n)}{r_i}}\|A_i^{-1}(y)\|_p^{\frac{-(\alpha_i+n)}{r_i}}+|{\rm log}_p\|A_i(y)\|_p| +2p^{\nu_{\vec{A} }}\big)\times\nonumber
\\
&\,\,\,\,\,\,\,\times\|A_i^{-1}(y)\|_p^{\frac{(\alpha_i+n)}{q_i}}\|A_i^{-1}(y)\|_p^{-(\alpha_i+n)(\frac{1}{q_i}+\lambda_i)}
\nonumber
\\
&\lesssim |{\rm log}_p\|A_i(y)\|_p|.\|A_i(y)\|_p^{-(\alpha_i+n)\lambda_i}.\nonumber
\end{align}
As an application, by (\ref{Morrey-CMO-Hp}), we obtain that
\begin{align}
&\|{\mathcal{H}}^p_{\Phi,\vec A,\vec b}(\vec f)\|_{{\mathop{B}\limits^{.}}^{q,\lambda}_{\omega}(\mathbb Q^n_p)}\lesssim \mathcal C_8.\mathcal B_{\vec r,\vec\omega}.\prod\limits_{i=1}^m\|f_i\|_{{\mathop{B}\limits^{.}}^{q_i,\lambda_i}_{\omega_i}(\mathbb Q^n_p)}.\nonumber
\end{align}

To give the proof for the part $(\rm ii)$ of the theorem, for $i=1,...,m$, let us choose
$
b_i(x)={\rm log}_p|x|_p$  for all $ x\in\mathbb Q^n_p\setminus\{0\}$, and ${f}_i(x)=|x|_p^{(n+\alpha_i)\lambda_i}$  for all $x\in\mathbb Q^n_p.
$
Now, we need to prove that
\begin{align}\label{choose-bi}
\|b_i\|_{CMO_{\omega_i}^{r_i}(\mathbb Q^n_p)}<\infty,\,\textit{\rm for all} \,\,i=1,...,m.
\end{align}
In fact, for any $R\in\mathbb Z$, we see that
\begin{align}
b_{i,R}&=\frac{1}{|B_R|}\int_{B_R}{\rm log}_p|x|_pdx=p^{-Rn}\sum\limits_{\gamma\leq R}\int_{S_\gamma}\gamma dx= p^{-Rn}\sum\limits_{\gamma\leq R}\gamma p^{\gamma n}(1-p^{-n})\nonumber
\\
&=p^{-Rn}(1-p^{-n})\frac{p^n.p^{Rn}(Rp^n-R-1)}{(p^{n}-1)^2}=\frac{Rp^n-R-1}{p^n-1}=R-\frac{1}{p^n-1}.\nonumber
\end{align}
Thus, we get
\begin{align}
&\frac{1}{\omega_i(B_R)}\int_{B_R}|b_i(x)-b_{i,B_R}|^{r_i}\omega_idx=p^{-R(\alpha_i+n)}\sum\limits_{\gamma\leq R}\int_{S_\gamma}\left|\gamma-(R-\frac{1}{p^n-1})\right|^{r_i}p^{\gamma\alpha_i}dx\nonumber
\\
&=p^{-R(\alpha_i+n)}\sum\limits_{\gamma\leq R} \left|\gamma-(R-\frac{1}{p^n-1})\right|^{r_i}p^{\gamma(\alpha_i+n)}(1-p^{-n})\nonumber
\\
&\lesssim p^{-R(\alpha_i+n)}\sum\limits_{\gamma\leq R} \left|\gamma-(R-\frac{1}{p^n-1})\right|^{r_i}p^{\gamma(\alpha_i+n)}= p^{-R(\alpha_i+n)}\sum\limits_{\ell\leq 0} \left|\ell+\frac{1}{p^n-1})\right|^{r_i}p^{(R+\ell)(\alpha_i+n)}\nonumber
\\
&\leq \sum\limits_{\ell\leq 0} \big(|\ell|^{r_i}+\frac{1}{(p^n-1)^{r_i}}\big)p^{\ell(\alpha_i+n)}<\infty,\nonumber
\end{align}
uniformly for $R\in\mathbb Z$. As an application, it immediately follows that the inequality (\ref{choose-bi}) holds. By choosing $b_i$ and $f_i$, we have
\begin{align}
\mathcal H^p_{\Phi,\vec A,\vec b}(\vec f)(x)=\int_{\mathbb Q^n_p}\dfrac{\Phi(y)}{|y|_p^n}\prod\limits_{i=1}^m |A_i(y)x|_p^{(\alpha_i+n)\lambda_i}\big({\rm log}_p\frac{|x|_p}{|A_i(y)x|_p}\big)dy.\nonumber
\end{align}
By the hypothesis (\ref{|A|<1}), we have $\|A_i(y)\|_p<1$, for all $y\in {\rm supp}(\Phi)$. Thus,
$$
|A_i(y)x|_p\leq \|A_i(y)\|_p.|x|_p<|x|_p.
$$
This gives that
$$
0< |{\rm log}_p \|A_i(y)\|_p|={\rm log}_p\frac{1}{|A_i(y)|_p}\leq {\rm log}_p\frac{|x|_p}{|A_i(y)x|_p}.
$$
Consequently, by (\ref{DK1}) and (\ref{LemmaCMO1Matrix}), we lead to
\begin{align}
\mathcal H^p_{\Phi,\vec A,\vec b}(\vec f)(x)&\gtrsim \Big(\int_{\mathbb Q^n_p}\dfrac{\Phi(y)}{|y|_p^n}\prod\limits_{i=1}^m \|A_i(y)\|_p^{-(\alpha_i+n)\lambda_i}|{\rm log}_p \|A_i(y)\|_p|dy\Big)|x|_p^{(\alpha+n)\lambda}\nonumber
\\
&=\mathcal C_8.|x|_p^{(\alpha+n)\lambda}.\nonumber
\end{align}
From this, by (\ref{cal-fi-Morrey}) above, we infer
\begin{align}
\|\mathcal H^p_{\Phi,\vec A,\vec b}(\vec f)\|_{{\mathop{B}\limits^.}^{q,\lambda}_{\omega}(\mathbb Q^n_p)}\gtrsim \mathcal C_8.\|\,|\cdot|_p^{(\alpha+n)\lambda}\|_{{\mathop{B}\limits^.}^{q,\lambda}_{\omega}(\mathbb Q^n_p)}\gtrsim \mathcal C_8.\prod\limits_{i=1}^m \|f_i\|_{{\mathop{B}\limits^.}^{q_i,\lambda_i}_{\omega_i}(\mathbb Q^n_p)}.\nonumber
\end{align}
Therefore, since  ${\mathcal{H}}^p_{\Phi,\vec A,\vec b}$ is bounded from ${\mathop{B}\limits^.}^{q_1,\lambda_1}_{\omega_1}(\mathbb Q_p^n)\times \cdots\times {\mathop{B}\limits^.}^{q_m, \lambda_m}_{\omega_m}(\mathbb Q^n_p)$ to ${\mathop{B}\limits^.}^{q,\lambda}_\omega(\mathbb Q_p^n)$, it implies that $\mathcal C_8<\infty$. This leads to that the theorem is completely proved.
\end{proof}
Now, we consider $A_i(y)= s_i(y).I_n$, for $i=1,...,m$. By the similar arguments, we then  obtain the following useful result.
\begin{corollary}
Suppose the hypothesis in Lemma \ref{LemmaCMO1Matrix} and (\ref{lambdaMorrey}) hold. Denote 
$$
\mathcal C_9:= \int_{\mathbb Q^n_p}\dfrac{\Phi(y)}{|y|_p^n}\prod\limits_{i=1}^m |s_i(y)|_p^{(\alpha_i+n)\lambda_i}|{\rm log_p}|s_i(y)|_p|dy.
$$
Then, the following statements are equivalent:
\\
(i)\, ${\mathcal{H}}^p_{\Phi,\vec A,\vec b}$ is bounded from ${\mathop{B}\limits^.}^{q_1,\lambda_1}_{\omega_1}(\mathbb Q_p^n)\times \cdots\times {\mathop{B}\limits^.}^{q_m, \lambda_m}_{\omega_m}(\mathbb Q^n_p)$ to ${\mathop{B}\limits^.}^{q,\lambda}_\omega(\mathbb Q_p^n)$, for any $\vec b=(b_1, ..., b_m)\in CMO_{\omega_1}^{r_1}(\mathbb Q^n_p)\times\cdots\times
CMO_{\omega_m}^{r_m}(\mathbb Q^n_p)$.
\\
(ii) $\mathcal C_9<\infty$.
\end{corollary}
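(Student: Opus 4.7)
The strategy is to specialize the preceding theorem to the scalar matrix case $A_i(y)=s_i(y)\,I_n$. Under this substitution we have $\|A_i(y)\|_p=|s_i(y)|_p$, $\|A_i^{-1}(y)\|_p=|s_i(y)|_p^{-1}$, $|\det A_i(y)|_p=|s_i(y)|_p^n$; in particular, condition (\ref{DK1}) holds with $\nu_{\vec A}=0$ and (\ref{DK3}) becomes an equality. The auxiliary quantities from Lemma \ref{LemmaCMO1Matrix} then collapse to $\mu_i(y)=|s_i(y)|_p^{-(\alpha_i+n)/q_i}$ and $\psi_i(y)\lesssim 1+|\log_p|s_i(y)|_p|$, while the quantity $\|A_i(y)\|_p^n/|\det A_i(y)|_p$ appearing there is identically equal to $1$.

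For (ii)$\Rightarrow$(i), I would run the argument of part (i) of the preceding theorem line by line with the above simplifications, arriving at
$$
\|\mathcal H^p_{\Phi,\vec A,\vec b}(\vec f)\|_{{\mathop{B}\limits^.}^{q,\lambda}_\omega(\mathbb Q^n_p)}\lesssim\mathcal B_{\vec r,\vec\omega}\Bigl(\int_{\mathbb Q^n_p}\frac{\Phi(y)}{|y|_p^n}\prod_{i=1}^m\big(1+|\log_p|s_i(y)|_p|\big)|s_i(y)|_p^{(\alpha_i+n)\lambda_i}\,dy\Bigr)\prod_{i=1}^m\|f_i\|_{{\mathop{B}\limits^.}^{q_i,\lambda_i}_{\omega_i}(\mathbb Q^n_p)}.
$$
Since $\log_p|s_i(y)|_p\in\mathbb Z$, on the set $\{|s_i(y)|_p\neq 1\}$ one has $|\log_p|s_i|_p|\geq 1$, and hence $1+|\log_p|s_i|_p|\leq 2|\log_p|s_i|_p|$; a term-by-term expansion of the product $\prod_i(1+|\log_p|s_i|_p|)$ then controls the integral by $\mathcal C_9$.

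For (i)$\Rightarrow$(ii), the obstacle is that the natural test function $b_i(x)=\log_p|x|_p$ used in the preceding theorem produces the signed integrand $\prod_i(-\log_p|s_i(y)|_p)$, whose possible cancellations prevent any lower bound on $\mathcal C_9$. To circumvent this, I would test with the modified symbols $b_i(x)=|\log_p|x|_p|$ and $f_i(x)=|x|_p^{(\alpha_i+n)\lambda_i}$. A calculation of the same flavor as (\ref{choose-bi}) shows $|\log_p|\cdot|_p|\in CMO^{r_i}_{\omega_i}(\mathbb Q^n_p)$; this is the principal technical step. For any $x\in S_0=\{|x|_p=1\}$ one has $b_i(x)=0$ and $b_i(s_i(y)x)=|\log_p|s_i(y)|_p|$, so that
$$
\mathcal H^p_{\Phi,\vec A,\vec b}(\vec f)(x)=(-1)^m\,\mathcal C_9\qquad(x\in S_0).
$$
Since $\omega(B_0)\simeq 1$ and $\omega\equiv 1$ on $S_0$, restricting the Morrey norm first to $B_0$ and then to the sphere $S_0$ yields $\|\mathcal H^p_{\Phi,\vec A,\vec b}(\vec f)\|_{{\mathop{B}\limits^.}^{q,\lambda}_\omega(\mathbb Q^n_p)}\gtrsim\mathcal C_9$; combined with the hypothesis (i) and the normalization (\ref{cal-fi-Morrey}), this forces $\mathcal C_9<\infty$, completing the equivalence.
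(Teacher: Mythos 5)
Your necessity argument, (i)$\Rightarrow$(ii), is sound and in fact takes a genuinely different and more robust route than the paper. The paper obtains the corollary by specializing the proof of the preceding theorem, whose necessity half tests with $b_i(x)=\log_p|x|_p$ and needs the support hypothesis (\ref{|A|<1}) to guarantee that every factor $\log_p\big(|x|_p/|A_i(y)x|_p\big)$ is positive, so that no cancellation occurs in the product. Your choice $b_i(x)=|\log_p|x|_p|$ together with evaluation on the sphere $S_0$ forces each factor $b_i(x)-b_i(s_i(y)x)$ to equal $-|\log_p|s_i(y)|_p|$ exactly, so the integrand is $(-1)^m$ times a nonnegative function and the identity $|\mathcal H^p_{\Phi,\vec A,\vec b}(\vec f)|\equiv\mathcal C_9$ on $S_0$ holds with no sign hypothesis on $|s_i(y)|_p$; restricting the Morrey norm to $B_0\supset S_0$ then gives the lower bound since $\omega(S_0)\simeq\omega(B_0)\simeq 1$. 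The membership $|\log_p|\cdot|_p|\in CMO^{r_i}_{\omega_i}(\mathbb Q^n_p)$ does follow from the computation behind (\ref{choose-bi}) after splitting the sum over spheres into $\gamma\ge 0$ and $\gamma<0$ (for $R\le 0$ the function coincides with $-\log_p|\cdot|_p$ on $B_R$, while for $R\ge 1$ the extra contribution from the spheres with $\gamma<0$ is $O\big((R^{r_i}+1)p^{-R(\alpha_i+n)}\big)$, uniformly bounded), but since you call it the principal technical step you should actually record this computation rather than assert it.

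The gap is in the sufficiency half, (ii)$\Rightarrow$(i), precisely on the set where some $\|A_i(y)\|_p=|s_i(y)|_p=1$. There $1+|\log_p|s_i(y)|_p|=1$ while $2|\log_p|s_i(y)|_p|=0$, so the inequality you invoke fails, and the term-by-term expansion of $\prod_i\big(1+|\log_p|s_i(y)|_p|\big)$ cannot be dominated by $\mathcal C_9$, whose integrand vanishes identically on that set. This is not a removable defect of the estimate: if $\Phi$ is concentrated on $\bigcap_{i=1}^m\{y:|s_i(y)|_p=1\}$ then $\mathcal C_9=0$ while $\mathcal H^p_{\Phi,\vec A,\vec b}$ need not be bounded, so the implication genuinely requires the analogue of the support condition (\ref{|A|<1}), namely $|s_i(y)|_p<1$ (or at least $\neq 1$) for all $i$ on ${\rm supp}\,\Phi$. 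The paper derives the corollary ``by the similar arguments'' from the preceding theorem and thereby tacitly carries this hypothesis along even though the corollary's statement omits it; you should impose it explicitly, after which your sufficiency argument goes through as written, since then $|\log_p|s_i(y)|_p|\ge 1$ for every $i$ on ${\rm supp}\,\Phi$ and each factor $1+|\log_p|s_i(y)|_p|$ is indeed at most $2|\log_p|s_i(y)|_p|$.
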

\begin{theorem}
Suppose the hypothesis in Lemma \ref{Lemma2} holds. Let $\lambda_i\in (\frac{-1}{q_i^*},0)$ for all $i=1,...,m$ and condition (\ref{lambdaMorreyAp}) in Theorem \ref{theo-HfMorreyq*} hold. Then, if
\begin{align}
\mathcal C_{10}&=\int_{\mathbb Q_p^n}\dfrac{\Phi(y)}{|y|_p^n}\prod\limits_{i=1}^m \psi_i^*(y).\mu_i^*(y)\times\nonumber
\\
&\times\Big(\chi_{\{\|A_i(y)\|_p\leq 1\}}(y)\|A_i(y)\|_p^{n\zeta\lambda_i} +\chi_{\{\|A_i(y)\|_p>1\}(y)}\|A_i(y)\|_p^{\frac{n\lambda_i(\delta-1)}{\delta}}\Big)dy<\infty,\nonumber
\end{align}
 we have ${\mathcal H}^p_{\Phi,\vec A,\vec b}$ is bounded from ${\mathop{B}\limits^.}^{q_1^*,\lambda_1}_{\omega}(\mathbb Q_p^n)\times \cdots\times {\mathop{B}\limits^.}^{q_m^*, \lambda_m}_{\omega}(\mathbb Q^n_p)$ to ${\mathop{B}\limits^.}^{q^*,\lambda}_\omega(\mathbb Q_p^n).$
\end{theorem}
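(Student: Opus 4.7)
The plan is to combine Lemma \ref{Lemma2} with the Morrey-Muckenhoupt comparison argument already used in Theorem \ref{theo-HfMorreyq*}, replacing the bare factors there by the CMO-adjusted factors $\psi_i^*(y)\mu_i^*(y)$.

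First, fix $\gamma \in \mathbb Z$ and apply Lemma \ref{Lemma2} to get
\[
\|\mathcal{H}^{p}_{\Phi,\vec A,\vec b}(\vec f)\|_{L^{q^*}_{\omega}(B_\gamma)} \lesssim \omega(B_\gamma)^{\frac{1}{q^*}}\mathcal{B}_{\vec r^*,\omega}\int_{\mathbb Q^n_p}\frac{\Phi(y)}{|y|_p^n}\prod_{i=1}^{m}\psi_i^*(y)\mu_i^*(y)\frac{\|f_i\|_{L^{q_i^*}_\omega(B_{\gamma+k_{A_i}})}}{\omega(B_{\gamma+k_{A_i}})^{1/q_i^*}}\,dy.
\]
I would then divide both sides by $\omega(B_\gamma)^{1/q^*+\lambda}$ and, using the bound $\|f_i\|_{L^{q_i^*}_\omega(B_{\gamma+k_{A_i}})} \leq \omega(B_{\gamma+k_{A_i}})^{1/q_i^*+\lambda_i}\|f_i\|_{\dot B^{q_i^*,\lambda_i}_\omega(\mathbb Q_p^n)}$ from the definition of the central Morrey norm, pull out the norms of $f_i$ to obtain
\[
\frac{\|\mathcal{H}^{p}_{\Phi,\vec A,\vec b}(\vec f)\|_{L^{q^*}_{\omega}(B_\gamma)}}{\omega(B_\gamma)^{1/q^*+\lambda}} \lesssim \mathcal{B}_{\vec r^*,\omega}\!\int_{\mathbb Q^n_p}\!\frac{\Phi(y)}{|y|_p^n}\prod_{i=1}^{m}\psi_i^*(y)\mu_i^*(y)\Bigl(\tfrac{\omega(B_{\gamma+k_{A_i}})}{\omega(B_\gamma)}\Bigr)^{\lambda_i}dy\;\prod_{i=1}^m\|f_i\|_{\dot B^{q_i^*,\lambda_i}_\omega(\mathbb Q_p^n)},
\]
where I used condition (\ref{lambdaMorreyAp}) to split $\omega(B_\gamma)^{-\lambda} = \prod_{i=1}^m\omega(B_\gamma)^{-\lambda_i}$.

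The key step, which also arose in the proof of Theorem \ref{theo-HfMorreyq*}, is to control the ratio $\bigl(\omega(B_{\gamma+k_{A_i}})/\omega(B_\gamma)\bigr)^{\lambda_i}$ independently of $\gamma$. Since $\omega\in A_\zeta\cap RH_\delta$ with $\delta\in(1,r_\omega)$ and $\lambda_i<0$, Proposition \ref{pro2.3DFan} yields exactly the dichotomy recorded in (\ref{theo3.4-omega}), namely
\[
\Bigl(\tfrac{\omega(B_{\gamma+k_{A_i}})}{\omega(B_\gamma)}\Bigr)^{\lambda_i}\lesssim \chi_{\{\|A_i(y)\|_p\leq 1\}}(y)\|A_i(y)\|_p^{n\zeta\lambda_i} + \chi_{\{\|A_i(y)\|_p>1\}}(y)\|A_i(y)\|_p^{\frac{n\lambda_i(\delta-1)}{\delta}}.
\]
(In the $\|A_i(y)\|_p\leq 1$ case $B_{\gamma+k_{A_i}}\subset B_\gamma$ and one applies the $A_\zeta$ lower bound with the sign of $\lambda_i$ reversing the inequality; in the opposite case one applies the reverse-Hölder upper bound to $\omega(B_\gamma)/\omega(B_{\gamma+k_{A_i}})$.)

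Substituting this estimate yields precisely the integrand defining $\mathcal C_{10}$, independent of $\gamma$. Taking the supremum over $\gamma\in\mathbb Z$ on the left gives
\[
\|\mathcal{H}^{p}_{\Phi,\vec A,\vec b}(\vec f)\|_{\dot B^{q^*,\lambda}_\omega(\mathbb Q_p^n)} \lesssim \mathcal C_{10}\,\mathcal{B}_{\vec r^*,\omega}\prod_{i=1}^m\|f_i\|_{\dot B^{q_i^*,\lambda_i}_\omega(\mathbb Q_p^n)},
\]
which is the claimed boundedness. The only real obstacle is the book-keeping of the two-case weight estimate, and this has already been isolated in (\ref{theo3.4-omega}); once one verifies that the bound from Lemma \ref{Lemma2} can be re-scaled by the Morrey normalizations, the rest is a direct substitution.
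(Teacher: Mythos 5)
Your proposal is correct and follows essentially the same route as the paper: apply Lemma \ref{Lemma2}, normalize by $\omega(B_\gamma)^{1/q^*+\lambda}$, use condition (\ref{lambdaMorreyAp}) to distribute $\omega(B_\gamma)^{-\lambda}$ over the factors, bound each $\big(\omega(B_{\gamma+k_{A_i}})/\omega(B_\gamma)\big)^{\lambda_i}$ via the dichotomy (\ref{theo3.4-omega}), and take the supremum over $\gamma$. The two-case weight estimate is justified exactly as you describe, so no gap remains.
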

\begin{proof}
For any $R\in\mathbb Z$, by Lemma \ref{Lemma2} and (\ref{lambdaMorreyAp}), we infer
\begin{align}
&\dfrac{1}{\omega(B_R)^{\frac{1}{q^*}+\lambda}}\|{\mathcal H}^{p}_{\Phi,\vec A,\vec b}(\vec f)\|_{L^{q^*}_{\omega}(B_R)}\nonumber
\\
&\lesssim \mathcal B_{\vec r^*, \omega}.\Big(\int_{\mathbb Q^n_p}\dfrac{\Phi(y)}{|y|_p^n}\prod\limits_{i=1}^m\psi_i^*(y).\mu_i^*(y)\dfrac{1}{\omega(B_{R+k_{A_i}})^{\frac{1}{q_i^*}+\lambda_i}}\|f_i\|_{L^{q_i^*}_\omega(B_{R+k_{A_i}})}\Big(\dfrac{\omega(B_{R+k_{A_i}})}{\omega(B_R)}\Big)^{\lambda_i}dy\Big)\nonumber
\\
&\lesssim \mathcal B_{\vec r^*, \omega}.\Big(\int_{\mathbb Q^n_p}\dfrac{\Phi(y)}{|y|_p^n}\prod\limits_{i=1}^m\psi_i^*(y).\mu_i^*(y)\Big(\dfrac{\omega(B_{R+k_{A_i}})}{\omega(B_R)}\Big)^{\lambda_i}dy\Big)\prod\limits_{i=1}^m\|f_i\|_{{\mathop{B}\limits^{.}}^{q^*_i,\lambda_i}_\omega(\mathbb Q^n_p)}.\nonumber
\end{align}
From this, by (\ref{theo3.4-omega}), we have
$
\|\mathcal H^p_{\Phi,\vec A,\vec b}(\vec f)\|_{{\mathop{B}\limits^.}^{q^*,\lambda}_{\omega}(\mathbb Q_p^n)}\lesssim \mathcal C_{10}.\mathcal B_{\vec r^*, \omega}. \prod\limits_{i=1}^m \|f_i\|_{{\mathop{B}\limits^.}^{q_i^*,\lambda_i}_{\omega}(\mathbb Q_p^n)},
$
which implies that the proof of this theorm is finished.

\end{proof}
\bibliographystyle{amsplain}

\end{document}